\title{On the conservation results for local reflection principles}
\author{Haruka Kogure\footnote{Email: kogure1987@stu.kanazawa-u.ac.jp}
\footnote{College of Science and Engineering, School of Mathematics and Physics, Kanazawa University, Kakuma, Kanazawa 920-1192, Japan}
and Taishi Kurahashi\footnote{Email: kurahashi@people.kobe-u.ac.jp}
\footnote{Graduate School of System Informatics, Kobe University, 1-1 Rokkodai, Nada, Kobe 657-8501, Japan.}}
\date{}
\theoremstyle{plain}
\newtheorem{thm}{Theorem}[section]
\newtheorem*{thm*}{Theorem}
\newtheorem{prop}[thm]{Proposition}
\newtheorem{cor}[thm]{Corollary}
\newtheorem{fact}[thm]{Fact}
\newtheorem*{fact*}{Fact}
\newtheorem{prob}[thm]{Problem}
\newtheorem*{prob*}{Problem}
\newtheorem{cl}{Claim}
\newtheorem*{scl*}{Subclaim}
\theoremstyle{definition}
\newcommand{\PA}{\mathsf{PA}}
\newcommand{\PR}{\mathrm{Pr}}
\newcommand{\PRR}{\mathrm{Pr}^{\mathrm{R}}}
\newcommand{\Prf}{\mathrm{Prf}}
\newcommand{\Prov}{\mathrm{Prov}}
\newcommand{\Proof}{\mathrm{Proof}}
\newcommand{\Con}{\mathrm{Con}}
\newcommand{\Rfn}{\mathrm{Rfn}}
\newcommand{\gn}[1]{\ulcorner#1\urcorner}
\newcommand{\D}[1]{\mathbf{D#1}}
\newcommand{\Fml}{\mathrm{Fml}_{\mathcal{L}_A}}
\newcommand{\num}{\overline}
\newcommand{\N}{\mathbb{N}}
\newcommand{\True}{\mathrm{True}}
\newcommand{\Bell}{\mathrm{Bell}}
\newcommand{\LA}{\mathcal{L}_A}
\begin{document}

\maketitle

\begin{abstract}
    For a class $\Gamma$ of formulas, $\Gamma$ local reflection principle $\Rfn_{\Gamma}(T)$ for a theory $T$ of arithmetic is a scheme formalizing the $\Gamma$-soundness of $T$. 
    Beklemishev \cite{Bek97} proved that for every $\Gamma \in \{\Sigma_n, \Pi_{n+1} \mid n \geq 1\}$, the full local reflection principle $\Rfn(T)$ is $\Gamma$-conservative over $T + \Rfn_{\Gamma}(T)$. 
    We firstly generalize the conservation theorem to nonstandard provability predicates: we prove that the second condition $\D{2}$ of the derivability conditions is a sufficient condition for the conservation theorem to hold. 
    We secondly investigate the conservation theorem in terms of Rosser provability predicates. 
    We construct Rosser predicates for which the conservation theorem holds and Rosser predicates for which the theorem does not hold. 

    {\flushleft{{\bf Keywords:} Local reflection principles, Provability predicates, Rosser provability predicates, Conservation theorem.}}
    
 \end{abstract}

\section{Introduction}

Let $T$ be any recursively axiomatized consistent extension of Peano Arithmetic $\PA$. 
For a class $\Gamma$ of formulas, the $\Gamma$ local reflection principle $\Rfn_{\Gamma}(T)$ for $T$ is the scheme $\{\Prov_T(\gn{\varphi}) \to \varphi \mid \varphi\ \text{is a}\ \Gamma$ sentence$\}$ which is a formalization of the $\Gamma$-soundness of $T$. 
Here, $\Prov_T(x)$ is a canonical provability predicate of $T$. 
Local reflection principles have been extensively studied by many authors (cf.~\cite{Bek05,KL68,Smo77}). 
Among other things, in the present paper, we focus on the following conservation theorem by Beklemishev: 

\begin{thm*}[Beklemishev {\cite[Theorem 1]{Bek05}}]
For each $\Gamma \in \{\Sigma_n, \Pi_{n+1} \mid n \geq 1\}$, the full local reflection principle $\Rfn(T)$ for $T$ is $\Gamma$-conservative over $T + \Rfn_{\Gamma}(T)$. 
\end{thm*}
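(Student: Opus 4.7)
The plan is to combine compactness with a disjunctive normal form expansion of the reflection hypotheses, and then to use $\D{2}$ together with $\Rfn_{\Gamma}(T)$ to peel off each provability operator. A final propositional case split on the Boolean values of the relevant $\Prov_T$-statements closes the argument; only necessitation and $\D{2}$ will be used, which is consistent with the main theme of the paper.

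First observe that $\Rfn_{\Sigma_n}(T)$ and $\Rfn_{\Pi_{n+1}}(T)$ are equivalent over $T$: insert or drop a vacuous universal quantifier in front of $\psi$ and apply $\D{2}$ to the provable equivalence $\psi \leftrightarrow \forall y\,\psi$ to identify $\Prov_T(\gn{\psi})$ with $\Prov_T(\gn{\forall y\,\psi})$. So assume $\Gamma = \Sigma_n$ and fix a $\Sigma_n$-sentence $\varphi$ with $T + \Rfn(T) \vdash \varphi$. By compactness, pick sentences $\psi_1, \dots, \psi_k$ with
$$T \vdash \bigwedge_{i=1}^{k} \bigl(\Prov_T(\gn{\psi_i}) \to \psi_i\bigr) \to \varphi.$$
Expanding the antecedent to DNF and rearranging, the hypothesis is equivalent to the family
$$T \vdash \bigwedge_{i \notin S} \psi_i \to \bigvee_{i \in S} \Prov_T(\gn{\psi_i}) \vee \varphi \qquad (S \subseteq \{1, \dots, k\}).$$

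Now apply necessitation followed by $\D{2}$, iterated to pass from $\bigwedge_{i \notin S} \Prov_T(\gn{\psi_i})$ to $\Prov_T\bigl(\gn{\bigwedge_{i \notin S} \psi_i}\bigr)$, obtaining
$$T \vdash \bigwedge_{i \notin S} \Prov_T(\gn{\psi_i}) \to \Prov_T\bigl(\gn{\textstyle\bigvee_{i \in S} \Prov_T(\gn{\psi_i}) \vee \varphi}\bigr).$$
Since $\Prov_T$ is $\Sigma_1$ and $\varphi \in \Sigma_n$ with $n \geq 1$, the inner sentence is $\Sigma_n$, so $\Rfn_{\Sigma_n}(T)$ strips the outer $\Prov_T$:
$$T + \Rfn_{\Sigma_n}(T) \vdash \bigwedge_{i \notin S} \Prov_T(\gn{\psi_i}) \to \bigvee_{i \in S} \Prov_T(\gn{\psi_i}) \vee \varphi. \qquad (\ast_S)$$

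To conclude, for every $J \subseteq \{1, \dots, k\}$ instantiate $(\ast_S)$ with $S = \{1, \dots, k\} \setminus J$ under the assumption $\bigwedge_{i \in J} \Prov_T(\gn{\psi_i}) \wedge \bigwedge_{i \notin J} \neg \Prov_T(\gn{\psi_i})$: the antecedent of $(\ast_S)$ is true, and every disjunct $\Prov_T(\gn{\psi_i})$ with $i \in S$ is false, so $\varphi$ follows. The $2^k$ such conjunctions are propositionally exhaustive, hence $T + \Rfn_{\Sigma_n}(T) \vdash \varphi$ by a finite case split. The one delicate point worth flagging is the complexity bookkeeping that keeps the inner disjunction inside $\Sigma_n$ so that $\Rfn_{\Sigma_n}(T)$ applies; this uses $\Sigma_1 \subseteq \Sigma_n$ for $n \geq 1$, and explains the hypothesis on $n$. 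Beyond that, the entire argument runs on necessitation, $\D{2}$, and classical propositional logic, with no appeal to $\D{3}$ or formalized $\Sigma_1$-completeness.
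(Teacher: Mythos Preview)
Your opening reduction is wrong. The claim that $\Rfn_{\Sigma_n}(T)$ and $\Rfn_{\Pi_{n+1}}(T)$ are equivalent over $T$ is false: the vacuous-quantifier trick only shows $\Rfn_{\Pi_{n+1}}(T) \vdash \Rfn_{\Sigma_n}(T)$ (which is anyway immediate from $\Sigma_n \subseteq \Pi_{n+1}$), not the converse; a genuine $\Pi_{n+1}$ sentence has a non-vacuous leading universal quantifier that you cannot drop. Indeed $T + \Rfn_{\Sigma_1}(T)$ is strictly weaker than $T + \Rfn_{\Pi_2}(T)$. Moreover, even if the equivalence held, it would not reduce the $\Pi_{n+1}$-conservativity statement to the $\Sigma_n$-conservativity statement, since these assert conservation for different classes of sentences.

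Fortunately the error is inessential: your main argument works verbatim for $\Gamma = \Pi_{n+1}$, because $\Sigma_1 \subseteq \Pi_{n+1}$ for $n \geq 1$ and $\Pi_{n+1}$ is closed under disjunction, so $\bigvee_{i \in S} \Prov_T(\gn{\psi_i}) \vee \varphi$ is still a $\Gamma$ sentence and $\Rfn_\Gamma(T)$ strips the outer box. Simply delete the first paragraph and run the argument for arbitrary $\Gamma \in \{\Sigma_n, \Pi_{n+1}\}$.

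With that fix, your proof is correct and uses the same ingredients as the paper's (necessitation, $\D{2}$, and one application of $\Rfn_\Gamma$ to a $\Gamma$ disjunction), but the organization differs. The paper argues by induction on $|X|$ that $T + \Rfn_\Gamma \vdash \bigvee_{i \notin X} \Prov_T(\gn{\varphi_i}) \lor \gamma$ (no antecedent), combining the inductive hypothesis with the boxed version of the original implication at each step. You instead expand the hypothesis to DNF, obtain the weaker implications $(\ast_S)$ directly for every $S$ in one shot, and finish with a $2^k$-way propositional case split on the truth values of the $\Prov_T(\gn{\psi_i})$. Your route avoids the induction at the cost of carrying the antecedent; the paper's induction yields a cleaner invariant but hides the same combinatorics.
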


Goryachev \cite{Gor89} studied local reflection principles $\Rfn(\PRR_T)$ for Rosser provability predicates $\PRR_T(x)$ of $T$. 
He proved that $\Rfn(\PRR_T)$ is equivalent to the usual one $\Rfn(T)$ over $T$ if and only if $T + \Rfn(\PRR_T) \vdash \Con_T$. 
Goryachev then provided a Rosser provability predicate $\PRR_T(x)$ such that $\Rfn(\PRR_T)$ is equivalent to $\Rfn(T)$ over $T$. 
Kurahashi \cite{Kur16} continued the work of Goryachev and extensively studied Rosser-type local reflection principles. 
In particular, the existence of a Rosser provability predicate whose local reflection principle is not equivalent to the usual one was proved. 
Then, the following problem was proposed: 

\begin{prob*}[{\cite[Problem 7.1]{Kur16}}]
Let $\Gamma \in \{\Sigma_n, \Pi_n \mid n \geq 1\}$. 
Is $\Rfn(\PRR_T)$ $\Gamma$-conservative over the theory $T + \Rfn_{\Gamma}(\PRR_T)$ for any Rosser provability predicate $\PRR_T(x)$ of $T$?
\end{prob*}

The present paper studies conservation property with respect to local reflection principles, focusing on Beklemishev's conservation theorem and this problem. 
Among other things, we generalize Beklemishev's theorem to non-standard provability predicates and provide a counterexample of the above problem. 

In Section \ref{sec:gen}, we firstly prove that for any provability predicate $\PR_T(x)$ of $T$, if $\PR_T(x)$ satisfies the following condition $\D{2}$, then the conservation theorem holds for local reflection principles based on $\PR_T(x)$: 
\begin{description}
    \item [$\D{2}$]: $T \vdash \PR_T(\gn{\varphi \to \psi}) \to (\PR_T(\gn{\varphi}) \to \PR_T(\gn{\psi}))$ for any $\varphi, \psi$. 
\end{description}
While Beklemishev's proof of his conservation theorem used the techniques of the modal logic $\mathsf{GL}$ of provability, our proof is simple without any detour to modal logic.
In Section \ref{sec:gen}, we secondly investigate Rosser provability predicates $\PRR_T(x)$ for which the conservation property holds by distinguishing whether $T + \Rfn(\PRR_T)$ proves $\Con_T$ or not. 

In Section \ref{sec:Ros}, we prove the existence of Rosser provability predicates lacking the $\Pi_1$-conservation property, and this gives counterexamples of the above problem. 
We prove that for each $\Gamma \in \{\Sigma_n, \Pi_n \mid n \geq 1\}$, there exits a Rosser provability predicate $\PRR_T(x)$ such that $T + \Rfn(\PRR_T) \vdash \Con_T$ but $T + \Rfn_{\Gamma}(\PRR_T) \nvdash \Con_T$. 
Furthermore, we then prove the existence of a Rosser provability predicate $\PRR_T(x)$ such that for any $\Gamma \in \{\Sigma_n, \Pi_n \mid n \geq 1\}$, $T + \Rfn(\PRR_T)$ is not $\Pi_1$-conservative over $T + \Rfn_{\Gamma}(\PRR_T)$. 

In the last section, we investigate the connection between the $\Sigma_1$-conservation property of Rosser provability predicates and $\Sigma_1$-soundness. 
We prove that $T$ is $\Sigma_1$-sound if and only if for any Rosser provability predicate $\PRR_T(x)$, there exists $\Gamma \in \{\Sigma_n, \Pi_n \mid n \geq 1\}$ such that $T + \Rfn(\PRR_T)$ is $\Sigma_1$-conservative over $T + \Rfn_{\Gamma}(\PRR_T)$.

\section{Preliminaries and background}\label{sec:pre}

Throughout this paper, let $T$ denote a recursively axiomatized consistent extension of Peano Arithmetic $\PA$ in the language $\LA$ of first-order arithmetic. 
Let $\omega$ be the set of all natural numbers. 
For each $n \in \omega$, $\num{n}$ denotes the numeral for $n$. 
For each formula $\varphi$, let $\gn{\varphi}$ denote the numeral of the G\"{o}del number of $\varphi$.

We inductively define the classes $\Sigma_n$ and $\Pi_n$ of $\LA$-formulas for each $n \geq 0$. 
Let $\Sigma_{0} = \Pi_{0}$ be the set of all formulas whose every quantifier is bounded. 
The classes $\Sigma_{n+1}$ and $\Pi_{n+1}$ are inductively defined as the smallest classes satisfying the following conditions:  
\begin{enumerate}
\item
$\Sigma_{n} \cup{\Pi_{n}} \subseteq \Sigma_{n+1} \cap{\Pi_{n+1}}$. 
\item
$\Sigma_{n+1}$ (resp.~$\Pi_{n+1}$) is closed under conjunction, disjunction and existential (resp.~universal) quantification.
\item
If $\varphi$ is in $\Sigma_{n+1}$ (resp.~$\Pi_{n+1}$), then $\neg \varphi$ is in $\Pi_{n+1}$ (resp.~$\Sigma_{n+1}$).
\item
If $\varphi$ is in $\Sigma_{n+1}$ (resp.~$\Pi_{n+1}$) and $\psi$ is in $\Pi_{n+1}$ (resp.~$\Sigma_{n+1}$), then $\varphi \to \psi$ is in $\Pi_{n+1}$ (resp.~$\Sigma_{n+1}$).
\end{enumerate}
If $\Gamma$ is $\Sigma_n$ (resp.~$\Pi_n$), let $\Gamma^{d}$ be $\Pi_n$ (resp.~$\Sigma_n$). 
The classes $\Sigma_n$ and $\Pi_n$ are primitive recursive, and then they are not closed under taking logically equivalent formulas. 
A formula $\varphi$ is said to be $\Delta_1(\PA)$ if $\varphi$ is $\Sigma_1$ and is $\PA$-provably equivalent to some $\Pi_1$ formula. 
Let $\True_{\Sigma_1}(x)$ be a $\Sigma_1$ formula naturally expressing that ``$x$ is a true $\Sigma_1$ sentence''. 
We may assume that $\True_{\Sigma_1}(x)$ is of the form $\exists y\, \delta(x, y)$ for some $\Delta_0$ formula $\delta(x, y)$. 
It is known that such a formula $\True_{\Sigma_1}(x)$ exists and that for any $\Sigma_1$ sentence $\varphi$, $\PA \vdash \varphi \leftrightarrow \True_{\Sigma_1}(\gn{\varphi})$ holds (see \cite{HP93,Kay91}).

The present paper heavily use the following witness comparison notation (see \cite{GS79}). 
For any existential formulas $\exists x \varphi(x)$ and $\exists x \psi(x)$, we introduce two connectives $\prec$ and $\preccurlyeq$ as the following abbreviations:
\begin{itemize}
	\item $\exists x \varphi(x) \prec \exists x \psi(x) : \equiv \exists x ( \varphi(x) \land \forall y \leq x \, \neg \psi(y))$. 
	\item $\exists x \varphi(x) \preccurlyeq \exists x \psi(x) : \equiv \exists x ( \varphi(x) \land \forall y < x \, \neg \psi(y))$.
\end{itemize}
We can apply the witness comparison notation to formulas of the form $\exists x \varphi(x) \lor \exists x \psi(x)$ by considering the formula $\exists x (\varphi(x) \lor \psi(x))$. 
The following proposition is easily verified. 

\begin{prop}\label{prop:wc}
For any existential formulas $\varphi$ and $\psi$, $\PA$ proves the following sentences:
\begin{enumerate}
    \item $\varphi \prec \psi \to \varphi \preccurlyeq \psi$.
    \item $\neg ( \varphi \prec \psi ) \vee \neg ( \psi \preccurlyeq \varphi )$.
    \item $\varphi \vee \psi \to ( \varphi \preccurlyeq \psi ) \vee ( \psi \prec \varphi)$.
\end{enumerate}
\end{prop}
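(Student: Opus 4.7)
Write $\varphi \equiv \exists x\, \varphi_0(x)$ and $\psi \equiv \exists x\, \psi_0(x)$. The plan is to unfold the witness comparison abbreviations and argue by straightforward case analysis on the relative size of the witnesses, appealing to the least number principle where needed.

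For (1), suppose $\varphi \prec \psi$ and fix $x$ with $\varphi_0(x) \land \forall y \leq x\, \neg \psi_0(y)$. Since $\forall y \leq x\, \neg \psi_0(y)$ trivially implies $\forall y < x\, \neg \psi_0(y)$, the same $x$ witnesses $\varphi \preccurlyeq \psi$. This is a one-line argument in $\PA$.

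For (2), argue by contradiction: assume both $\varphi \prec \psi$ and $\psi \preccurlyeq \varphi$, and fix witnesses $x_0$ for the former and $x_1$ for the latter. Compare $x_0$ and $x_1$. If $x_1 \leq x_0$ then the clause $\forall y \leq x_0\, \neg \psi_0(y)$ from the first witness gives $\neg \psi_0(x_1)$, contradicting $\psi_0(x_1)$. If $x_0 < x_1$, then the clause $\forall y < x_1\, \neg \varphi_0(y)$ from the second witness gives $\neg \varphi_0(x_0)$, contradicting $\varphi_0(x_0)$. Both cases being contradictory in $\PA$, the disjunction holds.

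For (3), assume $\varphi \lor \psi$, so $\exists z\, (\varphi_0(z) \lor \psi_0(z))$ holds. Apply the least number principle in $\PA$ to obtain the minimal such $z$; then for every $y < z$ both $\neg \varphi_0(y)$ and $\neg \psi_0(y)$ hold. Split on which disjunct $\varphi_0(z) \lor \psi_0(z)$ is witnessed: if $\varphi_0(z)$, then $\forall y < z\, \neg \psi_0(y)$ yields $\varphi \preccurlyeq \psi$; otherwise $\psi_0(z) \land \neg \varphi_0(z)$, which combined with $\forall y < z\, \neg \varphi_0(y)$ gives $\forall y \leq z\, \neg \varphi_0(y)$, hence $\psi \prec \varphi$. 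The only non-trivial ingredient is the availability of least-number-principle for $\Sigma_1$ formulas, which is standard in $\PA$, so no real obstacle arises; the proof is essentially bookkeeping around the definitions of $\prec$ and $\preccurlyeq$.
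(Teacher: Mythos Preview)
Your argument is correct and is exactly the routine unfolding the paper has in mind: the paper does not give a proof at all, merely stating that the proposition ``is easily verified.'' One small remark: in part (3) you invoke the least number principle ``for $\Sigma_1$ formulas,'' but the proposition is stated for arbitrary existential formulas, so $\varphi_0,\psi_0$ need not be bounded; this is harmless since $\PA$ has full induction and hence LNP for all formulas.
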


\subsection{Provability predicates}

A $\Delta_1(\PA)$ formula $\Prf_{T}(x,y)$ is said to be a \textit{proof predicate} of $T$ if it satisfies the following conditions:
\begin{itemize}
    \item For any $\LA$-formula $\varphi$, $T \vdash \varphi$ if and only if $\N \models \exists y\, \Prf_T(\gn{\varphi}, y)$.
    \item $\PA \vdash \forall y \bigl( \exists x\, \Prf_{T}(x,y) \to \exists ! x\, \Prf_{T}(x,y) \bigr)$. 
\end{itemize}
The second clause says that our proof predicates are single conclusion ones. 
For a proof predicate $\Prf_T(x, y)$, the $\Sigma_1$ formula $\exists y\, \Prf_T(x, y)$ is called a \textit{provability predicate} of $T$. 
In his proof of the incompleteness theorems, G\"{o}del constructed a natural proof predicate $\Proof_T(x, y)$ of $T$ saying that ``$y$ is a $T$-proof of $x$''. 
Let $\Prov_T(x)$ denote the canonical provability predicate $\exists y\, \Proof_T(x, y)$ of $T$. 
It is known that $\Prov_T(x)$ satisfies the following Hilbert--Bernays--L\"{o}b's derivability conditions and L\"{o}b's theorem: 

\begin{fact}
For any $\LA$-formulas $\varphi$ and $\psi$, 
\begin{enumerate}
	\item $T \vdash \Prov_T(\gn{\varphi \to \psi}) \to \bigl(\Prov_T(\gn{\varphi}) \to \Prov_T(\gn{\psi}) \bigr)$. \hfill \textup{($\D{2}$)}
	\item $T \vdash \Prov_T(\gn{\varphi}) \to \Prov_T(\gn{\Prov_T(\gn{\varphi})})$. \hfill \textup{($\D{3}$)}
	\item If $\varphi$ is a $\Sigma_1$ sentence, then $T \vdash \varphi \to \Prov_T(\gn{\varphi})$. \hfill \textup{(Formalized $\Sigma_1$-completeness)}
	\item If $T \vdash \Prov_T(\gn{\varphi}) \to \varphi$, then $T \vdash \varphi$. \hfill \textup{(L\"ob's theorem)}
\end{enumerate}
 \end{fact}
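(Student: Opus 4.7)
The plan is to handle the four clauses in the order $\D{2}$, formalized $\Sigma_1$-completeness, $\D{3}$, and finally L\"{o}b's theorem, so that each clause may draw on the preceding items.

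For $\D{2}$, I would formalize the modus ponens rule of the underlying calculus. Given codes $p, q$ of proofs of $\varphi \to \psi$ and $\varphi$ respectively, one produces a code $r$ of a proof of $\psi$ by concatenating $p, q$ with an additional modus ponens inference line; the construction is primitive recursive, so $\PA$ proves $\forall p, q\, \bigl( \Proof_T(\gn{\varphi \to \psi}, p) \land \Proof_T(\gn{\varphi}, q) \to \exists r\, \Proof_T(\gn{\psi}, r) \bigr)$ by a direct $\Delta_0$ verification, and existential quantification over $p, q$ yields $\D{2}$.

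The main obstacle is formalized $\Sigma_1$-completeness. I would prove, inside $\PA$, the stronger statement $\forall \vec{x}\, \bigl( \varphi(\vec{x}) \to \Prov_T(\gn{\varphi(\dot{\vec{x}})}) \bigr)$ by meta-induction on the build-up of the $\Sigma_1$ formula $\varphi(\vec{x})$, the dot denoting substitution of numerals for the free variables. The atomic and $\Delta_0$ cases reduce to formalizing inside $\PA$ that $\Q$ decides every true $\Delta_0$ sentence; the boolean, bounded quantifier, and existential cases are routine applications of $\D{2}$ together with the induction hypothesis. For a detailed execution I would follow the presentation in \cite{HP93}. Given this, $\D{3}$ is immediate: since $\Prov_T(\gn{\varphi})$ is itself a $\Sigma_1$ sentence, formalized $\Sigma_1$-completeness applied to it yields $T \vdash \Prov_T(\gn{\varphi}) \to \Prov_T(\gn{\Prov_T(\gn{\varphi})})$.

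For L\"{o}b's theorem, I would use the fixed-point lemma. First observe that necessitation, namely $T \vdash \theta \Rightarrow T \vdash \Prov_T(\gn{\theta})$, is available: if $\theta$ has a proof with code $n$, then $\Proof_T(\gn{\theta}, \num{n})$ is a true $\Delta_0$ sentence and hence provable in $T$. Now assume $T \vdash \Prov_T(\gn{\varphi}) \to \varphi$ and choose $\psi$ with $T \vdash \psi \leftrightarrow (\Prov_T(\gn{\psi}) \to \varphi)$. Applying necessitation to $\psi \to (\Prov_T(\gn{\psi}) \to \varphi)$ and then $\D{2}$ twice gives $T \vdash \Prov_T(\gn{\psi}) \to (\Prov_T(\gn{\Prov_T(\gn{\psi})}) \to \Prov_T(\gn{\varphi}))$; combining with $\D{3}$ yields $T \vdash \Prov_T(\gn{\psi}) \to \Prov_T(\gn{\varphi})$, and then the hypothesis gives $T \vdash \Prov_T(\gn{\psi}) \to \varphi$, that is, $T \vdash \psi$. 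One more necessitation delivers $T \vdash \Prov_T(\gn{\psi})$, whence $T \vdash \varphi$.
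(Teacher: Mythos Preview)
Your argument is correct and follows the standard textbook route (formalized modus ponens for $\D{2}$; meta-induction on the complexity of $\Sigma_1$ formulas for formalized $\Sigma_1$-completeness, with $\D{3}$ as a special case; and the fixed-point derivation of L\"{o}b's theorem). The paper, however, does not prove this statement at all: it is recorded as a background \emph{Fact} (``It is known that $\Prov_T(x)$ satisfies the following \ldots'') and used without proof throughout, so there is no proof in the paper to compare against. Your sketch is exactly the kind of justification the paper is implicitly deferring to the literature (e.g.\ \cite{HP93}).
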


Let $\Con_T$ be the $\Pi_1$ sentence $\neg \Prov_{T}(\gn{0=1})$ which expresses the consistency of $T$. 
We inductively define the sequence $\{\Con_T^n\}_{n \in \omega}$ of $\Pi_1$ sentences as follows:
\begin{itemize}
    \item $\Con_{T}^{0} : \equiv 0=0$; 
    \item $\Con_{T}^{n+1} : \equiv \neg \Prov_T(\gn{\neg \Con_{T}^{n}})$.
\end{itemize}

A formula $\PRR_{T}(x)$ is said to be a \textit{Rosser provability predicate} of $T$ if $\PRR_{T}(x)$ is of the form $\PR_{T}(x) \prec \PR_{T}(\dot{\neg} x)$ for some proof predicate $\Prf_{T}(x,y)$ of $T$.
Here, $\dot{\neg} x$ is a term corresponding to a primitive recursive function computing the G\"odel number of $\neg \varphi$ from that of $\varphi$. 
Rosser provability predicates are essentially introduced by Rosser \cite{Ros36} to improve the first incompleteness theorem. 
It is known that the second incompleteness theorem does not hold for Rosser provability predicates $\PRR_T(x)$, that is, $\PA \vdash \neg \PRR_{T}(\gn{0=1})$ holds. 

\subsection{Local reflection principles}

For each $\Gamma \in \{ \Sigma_{n}, \Pi_{n} \mid n \geq 1 \}$, the \textit{$\Gamma$ local reflection principle} $\Rfn_{\Gamma} (T)$ for $T$ is the set $\{\Prov_T(\gn{\varphi}) \to \varphi \mid \varphi$ is a $\Gamma$ sentence$\}$ which expresses the $\Gamma$-soundness of $T$. 
The \textit{local reflection principle} $\Rfn(T)$ for $T$ is the set $\bigcup_{n \geq 1} \Rfn_{\Sigma_n}(T)$. 
Similarly, for any provability predicate $\PR_{T}(x)$ of $T$, let $\Rfn_{\Gamma}(\PR_T) : = \{\PR_T(\gn{\varphi}) \to \varphi \mid \varphi$ is a $\Gamma$ sentence$\}$ and $\Rfn(\PR_{T}) : = \bigcup_{n \geq 1} \Rfn_{\Sigma_n}(\PR_T)$. 
Let $\mathcal{B}(\Sigma_n)$ denote the class of all Boolean combinations of $\Sigma_n$ formulas. 
Beklemishev proved the following conservation theorem by using the modal logic $\mathsf{GL}$ of provability. 

\begin{thm}[Beklemishev {\cite[Theorem 1]{Bek05}}]\label{Bek}
For each $\Gamma \in \{\Sigma_n, \Pi_{n+1} \mid n \geq 1\}$, the full local reflection principle $\Rfn(T)$ for $T$ is $\Gamma$-conservative over $T + \Rfn_{\Gamma}(T)$. 
Moreover, $\Rfn(T)$ is $\mathcal{B}(\Sigma_1)$-conservative over $T + \Rfn_{\Sigma_1}(T)$. 
\end{thm}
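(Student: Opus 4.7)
The plan is to prove the theorem by a direct induction on the number of reflection instances used, employing only the derivability condition $\D{2}$ (with $\D{1}$ implicit from $\Sigma_1$-completeness), thereby avoiding the detour through the modal logic $\mathsf{GL}$ that Beklemishev originally employed. This matches the uniform $\D{2}$-only strategy advertised in Section \ref{sec:gen}.

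By compactness, if $T + \Rfn(T) \vdash \varphi$ with $\varphi \in \Gamma$, there exist formulas $\psi_1, \dots, \psi_k$ with $T \vdash \bigwedge_{i=1}^{k}\bigl(\Prov_T(\gn{\psi_i}) \to \psi_i\bigr) \to \varphi$. In particular, $T \vdash \bigwedge_i \psi_i \to \varphi$, and $\D{1}$ together with iterated $\D{2}$ yields
\[
T \vdash \bigwedge_i \Prov_T(\gn{\psi_i}) \to \Prov_T(\gn{\varphi}).
\]

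The argument is by induction on $k$. The base case $k=0$ is trivial. For the inductive step, fix an arbitrary $i_0 \in \{1, \ldots, k\}$ and consider the sentence $\varphi_{i_0} := \neg\Prov_T(\gn{\psi_{i_0}}) \to \varphi$, equivalently $\Prov_T(\gn{\psi_{i_0}}) \vee \varphi$. Under the assumption $\neg\Prov_T(\gn{\psi_{i_0}})$ the reflection instance $r_{i_0} := \Prov_T(\gn{\psi_{i_0}}) \to \psi_{i_0}$ is vacuously true, so the original hypothesis reorganizes as $T \vdash \bigwedge_{j \neq i_0} r_j \to \varphi_{i_0}$. Crucially, $\varphi_{i_0}$ still lies in $\Gamma$: for $\Gamma \in \{\Sigma_n, \Pi_{n+1} \mid n\geq 1\}$ we have $\Sigma_1 \subseteq \Gamma$ and $\Gamma$ is closed under disjunction, so $\Prov_T(\gn{\psi_{i_0}}) \vee \varphi \in \Gamma$. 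By the inductive hypothesis applied to $\varphi_{i_0}$ with the $k-1$ reflection instances $\{r_j : j \neq i_0\}$,
\[
T + \Rfn_\Gamma(T) \vdash \neg\Prov_T(\gn{\psi_{i_0}}) \to \varphi.
\]
Since $i_0$ was arbitrary, working in $T + \Rfn_\Gamma(T) + \neg\varphi$ I obtain $\Prov_T(\gn{\psi_{i_0}})$ for every $i_0 \leq k$, hence $\bigwedge_i \Prov_T(\gn{\psi_i})$, and by the displayed $\D{2}$-consequence, $\Prov_T(\gn{\varphi})$. The $\Rfn_\Gamma(T)$-instance $\Prov_T(\gn{\varphi}) \to \varphi$ then yields $\varphi$, contradicting $\neg\varphi$. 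Hence $T + \Rfn_\Gamma(T) \vdash \varphi$.

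For the moreover clause on $\mathcal{B}(\Sigma_1)$-conservation I would run the identical induction; the required complexity facts $\Sigma_1 \subseteq \mathcal{B}(\Sigma_1)$ and closure of $\mathcal{B}(\Sigma_1)$ under disjunction are immediate. The main obstacle — indeed the only nonroutine insight — is choosing the inductive target to be $\varphi_{i_0} = \neg\Prov_T(\gn{\psi_{i_0}}) \to \varphi$ rather than $\varphi$ itself: this simultaneously allows one reflection instance to be peeled off and keeps the target within $\Gamma$, via the observation that a $\Sigma_1$ disjunct may be added to any formula in $\Gamma$ without leaving $\Gamma$. Once this substitution is made, everything else is $\D{2}$-bookkeeping, and because the argument uses $\Prov_T$ only through $\D{1}$ and $\D{2}$, the very same proof delivers the more general statement for arbitrary provability predicates satisfying $\D{2}$ that the paper announces.
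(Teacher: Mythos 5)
Your proof of the main clause is correct and is essentially the paper's own argument: the paper proves this statement (generalized to any provability predicate satisfying $\D{2}$) as Theorem \ref{GenCons}, by induction on the cardinality of a set $X \subseteq [k]$ of ``processed'' reflection instances, with invariant $T + \Rfn_{\Gamma}(T) \vdash \bigvee_{i \in [k]\setminus X}\Prov_T(\gn{\varphi_i}) \lor \gamma$. Your downward recursion on $k$, which absorbs one disjunct $\Prov_T(\gn{\psi_{i_0}})$ into the target at each level, unwinds to exactly the same family of disjunctions and uses the same two ingredients in the same places: $\D{2}$ (plus the automatic $\D{1}$) to convert $T \vdash \bigwedge_j \psi_j \to (\bigvee \Prov_T(\gn{\cdot}) \lor \gamma)$ into an implication between provability assertions, and a $\Gamma$-reflection instance for the disjunction $\bigvee \Prov_T(\gn{\cdot}) \lor \gamma$, which stays in $\Gamma$ because $\Sigma_1 \subseteq \Gamma$ and $\Gamma$ is closed under disjunction. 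So for $\Gamma \in \{\Sigma_n, \Pi_{n+1} \mid n \geq 1\}$ your write-up is a correct reorganization of the proof the paper gives.

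The ``moreover'' clause is where your proposal has a gap. Running ``the identical induction'' with $\Gamma = \mathcal{B}(\Sigma_1)$ requires, at every level, reflection instances for $\mathcal{B}(\Sigma_1)$ sentences: the auxiliary targets $\bigvee \Prov_T(\gn{\cdot}) \lor \varphi$ and, in the final step, $\varphi$ itself are genuine Boolean combinations of $\Sigma_1$ sentences, not $\Sigma_1$ sentences. But the base theory in the statement is $T + \Rfn_{\Sigma_1}(T)$, not $T + \Rfn_{\mathcal{B}(\Sigma_1)}(T)$, so those instances are not available as axioms. Your induction therefore yields only $\mathcal{B}(\Sigma_1)$-conservativity over $T + \Rfn_{\mathcal{B}(\Sigma_1)}(T)$; to finish you need the further fact that $T + \Rfn_{\Sigma_1}(T) \vdash \Rfn_{\mathcal{B}(\Sigma_1)}(T)$. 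This is standard (put $\varphi$ in conjunctive normal form and, for a conjunct $\sigma \lor \pi$ with $\sigma \in \Sigma_1$ and $\pi \in \Pi_1$, use formalized $\Sigma_1$-completeness on $\neg\pi$ together with $\D{2}$ to reduce its reflection to that of $\sigma$), but it is not mere ``$\D{2}$-bookkeeping'': it invokes formalized $\Sigma_1$-completeness and does not transfer to arbitrary $\D{2}$-predicates, which is presumably why Theorem \ref{GenCons} states its conclusion as conservativity over $T + \Rfn_{\Gamma}(\PR_T)$ for the same class $\Gamma$ rather than reproducing the $\mathcal{B}(\Sigma_1)$-versus-$\Sigma_1$ asymmetry of Beklemishev's theorem.
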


A pioneering work for Rosser-type reflection principle $\Rfn(\PRR_{T})$ was done by Goryachev \cite{Gor89}. 
The following proposition is a stratified version of Goryachev's characterization result: 
\begin{fact}[(Essentially) Goryachev \cite{Gor89}]\label{Fact:Gor}
For any $\Gamma \in \{\Sigma_n, \Pi_n \mid n \geq 1\}$ and any Rosser provability predicate $\PRR_{T}(x)$ of $T$, the following are equivalent:
\begin{enumerate}
    \item $T+ \Rfn_{\Gamma}(T)$ and $T + \Rfn_{\Gamma}(\PRR_T)$ are deductively equivalent.
    \item $T + \Rfn_{\Gamma}(\PRR_T) \vdash \Con_T$.
\end{enumerate}
\end{fact}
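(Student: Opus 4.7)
The plan is to prove the two implications separately: $(1)\Rightarrow(2)$ is essentially immediate, while $(2)\Rightarrow(1)$ is the main work.

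For $(1)\Rightarrow(2)$, I would observe that $0=1$ is a $\Delta_0$ sentence and hence belongs to every $\Gamma$ in the list. Thus $\Prov_T(\gn{0=1}) \to 0=1$ is an instance of $\Rfn_{\Gamma}(T)$ that is propositionally equivalent to $\Con_T$, so $T + \Rfn_{\Gamma}(T) \vdash \Con_T$. Combining this with (1) yields $T + \Rfn_{\Gamma}(\PRR_T) \vdash \Con_T$.

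For $(2)\Rightarrow(1)$, I would use that both $T + \Rfn_{\Gamma}(\PRR_T) \vdash \Con_T$ (by assumption) and $T + \Rfn_{\Gamma}(T) \vdash \Con_T$ (from the previous paragraph). The task reduces to showing that, for each $\Gamma$-sentence $\varphi$, the instances $\Prov_T(\gn{\varphi}) \to \varphi$ and $\PRR_T(\gn{\varphi}) \to \varphi$ are mutually derivable over $T + \Con_T$. The cleanest route is via the schematic equivalence
\[
T + \Con_T \vdash \PRR_T(\gn{\varphi}) \leftrightarrow \Prov_T(\gn{\varphi}),
\]
from which the two reflection instances become interderivable, so the theories $T + \Rfn_{\Gamma}(T)$ and $T + \Rfn_{\Gamma}(\PRR_T)$ coincide once $\Con_T$ is in hand in both.

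To establish this schematic equivalence, I would work directly with the concrete witness structure of the proof predicate $\Prf_T$ underlying $\PRR_T$, together with Proposition \ref{prop:wc}. For $(\Rightarrow)$, a Rosser witness $y$ of $\PRR_T(\gn{\varphi})$ provides a $\Prf_T$-proof of $\varphi$ while blocking any $\Prf_T$-witness for $\neg\varphi$ below $y$; $\Con_T$ then rules out competing $\Prf_T$-witnesses for $\neg\varphi$ altogether, and formalized $\Sigma_1$-completeness for $\Prov_T$ delivers $\Prov_T(\gn{\varphi})$. For $(\Leftarrow)$, from $\Prov_T(\gn{\varphi})$ one extracts a standard proof, pushes it down to a $\Prf_T$-witness, and uses $\Con_T$ together with Proposition \ref{prop:wc}(3) to establish the strict precedence $\PR_T(\gn{\varphi}) \prec \PR_T(\dot{\neg}\gn{\varphi})$ defining $\PRR_T(\gn{\varphi})$.

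The hard part is that the paper imposes no derivability conditions on the plain predicate $\PR_T$ associated with $\Prf_T$; in particular $\PR_T$ and $\Prov_T$ are not in general provably equivalent in $T$. The schematic equivalence therefore cannot be reduced to a black-box equivalence of these two predicates, and must instead be carried out through careful manipulation of $\Prf_T$-witnesses, relying only on $\Prf_T$ being a single-conclusion $\Delta_1(\PA)$ numeration of $T$, together with $\Con_T$ and the witness-comparison identities of Proposition \ref{prop:wc}.
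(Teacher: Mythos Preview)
The paper does not prove Fact~\ref{Fact:Gor}; it is stated with a citation to Goryachev, so there is no in-paper proof to compare against. Your $(1)\Rightarrow(2)$ is fine. The problem is your route for $(2)\Rightarrow(1)$.

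The schematic equivalence $T + \Con_T \vdash \PRR_T(\gn{\varphi}) \leftrightarrow \Prov_T(\gn{\varphi})$ that you aim for is \emph{false} under the paper's definition of proof predicate, so no amount of ``careful manipulation of $\Prf_T$-witnesses'' will establish it. Here is a counterexample. Take any sentence $\eta$ with $\PA + \Con_T \vdash \neg\Prov_T(\gn{\eta}) \land \neg\Prov_T(\gn{\neg\eta})$ and set
\[
\Prf_T(x,y) :\equiv \exists z\bigl(y=2z \land \Proof_T(x,z)\bigr) \lor \exists z\bigl(y=2z{+}1 \land \Proof_T(\gn{\neg\Con_T},z) \land x=\gn{\eta}\bigr).
\]
This is $\Delta_1(\PA)$ and single-conclusion, and in $\N$ the second disjunct is vacuous (as $T\nvdash\neg\Con_T$), so it numerates $T$. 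In any model of $T+\Con_T+\neg\Con_T^2$ the second disjunct fires, giving $\PR_T(\gn{\eta})$; and $\PR_T(\gn{\neg\eta})\leftrightarrow\Prov_T(\gn{\neg\eta})$ fails there, so $\PRR_T(\gn{\eta})$ holds while $\Prov_T(\gn{\eta})$ does not. Hence $T+\Con_T\nvdash \PRR_T(\gn{\eta})\to\Prov_T(\gn{\eta})$.

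Your sketch breaks exactly where this counterexample bites. In $(\Rightarrow)$ you say ``$\Con_T$ rules out competing $\Prf_T$-witnesses for $\neg\varphi$'', but $\Con_T$ constrains $\Proof_T$, not $\Prf_T$; and $\Sigma_1$-completeness applied to a $\Prf_T$-witness yields $\Prov_T(\gn{\PR_T(\gn{\varphi})})$, not $\Prov_T(\gn{\varphi})$. In $(\Leftarrow)$ you ``extract a standard proof'' from $\Prov_T(\gn{\varphi})$ and ``push it down to a $\Prf_T$-witness'', neither of which is available inside $T$. You correctly diagnose that $\PR_T$ and $\Prov_T$ need not be provably equivalent---but then the target equivalence you set up simply does not follow from the listed ingredients. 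Note that when the paper itself needs this equivalence (in the proof of Theorem~\ref{RosThm1}), it first establishes $\PA\vdash\forall x(\Prov_T(x)\leftrightarrow\PR_e(x))$ for the \emph{specific} predicate constructed there, and only then concludes $\PA+\Con_T\vdash \Prov_T(\gn{\varphi})\leftrightarrow\PRR_e(\gn{\varphi})$; that extra hypothesis is precisely what your general argument lacks.
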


In \cite{Kur16}, the second author continued the work of Goryachev and extensively studied Rosser-type local reflection principles. 
The following theorem is a refinement of Goryachev's theorem on the existence of a Rosser provability predicate $\PRR_{T}(x)$ of $T$ such that $T + \Rfn(\PRR_T) \vdash \Con_T$. 

\begin{fact}[Kurahashi {\cite[Theorem 6.8 and Corollary 6.14]{Kur16}}]\label{RosProvCon}
There exists a Rosser provability predicate $\PRR_{T}(x)$ of $T$ such that $T + \Rfn_{\Sigma_1}(\PRR_T) \vdash \Con_T$ and $T + \Rfn_{\Pi_1}(\PRR_T) \vdash \Con_T$. 
Consequently, $T + \Rfn_{\Gamma}(T)$ and $T + \Rfn_{\Gamma}(\PRR_T)$ are deductively equivalent for all $\Gamma \in \{\Sigma_n, \Pi_n \mid n \geq 1\}$.
\end{fact}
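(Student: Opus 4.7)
The ``Consequently'' assertion is immediate from Fact \ref{Fact:Gor}: every $\Sigma_1$ (resp.\ $\Pi_1$) sentence lies in $\Sigma_n$ (resp.\ $\Pi_n$) for every $n \geq 1$, so once $T + \Rfn_{\Sigma_1}(\PRR_T) \vdash \Con_T$ and $T + \Rfn_{\Pi_1}(\PRR_T) \vdash \Con_T$ are established, every $T + \Rfn_\Gamma(\PRR_T)$ in the list proves $\Con_T$, and Fact \ref{Fact:Gor} then yields deductive equivalence with $T + \Rfn_\Gamma(T)$. The real task is thus to build a single Rosser predicate $\PRR_T(x) = \PR^*_T(x) \prec \PR^*_T(\dot{\neg} x)$ realizing both base cases.

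My plan is to enforce the two explicit derivations
\[
T \vdash \neg \Con_T \to \PRR_T(\gn{0=1}) \quad \text{and} \quad T \vdash \neg \Con_T \to \PRR_T(\gn{\Con_T}).
\]
The first, combined with the $\Sigma_1$-reflection instance $\PRR_T(\gn{0=1}) \to (0=1)$, gives $T + \Rfn_{\Sigma_1}(\PRR_T) \vdash \Con_T$; the second, combined with the $\Pi_1$-reflection instance $\PRR_T(\gn{\Con_T}) \to \Con_T$, gives $T + \Rfn_{\Pi_1}(\PRR_T) \vdash \Con_T$. The four sentences $\gn{0=1}, \gn{\neg(0=1)}, \gn{\Con_T}, \gn{\neg \Con_T}$ appearing in the two underlying witness comparisons are pairwise distinct, so Proposition \ref{prop:wc}(2) does not preclude both comparisons from holding simultaneously.

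To produce such a $\PRR_T$, I would modify $\Proof_T$ into a $\Delta_1(\PA)$, single-conclusion proof predicate $\Prf^*_T(x, y)$ that coincides with $\Proof_T$ over $\N$ (since $T$ is consistent) but, in any model of $T + \neg \Con_T$, exhibits a $\Prf^*_T$-witness of $\gn{0=1}$ (resp.\ $\gn{\Con_T}$) strictly below every $\Prf^*_T$-witness of $\gn{\neg(0=1)}$ (resp.\ $\gn{\neg \Con_T}$). A natural outline is a pairing $y = \langle k, z \rangle$ with $k \in \{0, 1, 2\}$: the $k = 0$ slot carries a suitably sparsified copy of $\Proof_T$, while $k = 1$ and $k = 2$ house injected proofs of $\gn{0=1}$ and $\gn{\Con_T}$ whose validity is triggered by $z$ being a $\Proof_T$-witness of $0=1$. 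The hardest part I foresee is that the standard $\Proof_T$-codes of $\gn{\neg(0=1)}$ and $\gn{\neg \Con_T}$ are numerically small while the inconsistency witness $z$ can be arbitrarily large, so a purely static pairing cannot guarantee the desired orderings in a $\Delta_1(\PA)$ way; I would resolve this by a self-referential (fixed-point) refinement in the spirit of Guaspari--Solovay \cite{GS79}, in which the side conditions on the $k = 0$ slot and the triggers on $k = 1, 2$ are defined using $\Prf^*_T$ itself, forcing both witness comparisons to go through in parallel.
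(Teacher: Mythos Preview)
The paper does not prove this statement; it is cited as a Fact from \cite{Kur16}. So there is no in-paper proof to compare against. Nevertheless, your proposal contains a genuine error.

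Your plan for the $\Sigma_1$ case is to arrange $T \vdash \neg \Con_T \to \PRR_T(\gn{0=1})$. This is impossible for \emph{any} Rosser provability predicate. Since $T \vdash \neg(0=1)$, the underlying proof predicate $\Prf^*_T$ must have a standard witness $y_0$ for $\gn{\neg(0=1)}$ in $\N$; since $T$ is consistent, it has no standard witness for $\gn{0=1}$. Because $\Prf^*_T$ is $\Delta_1(\PA)$, $\PA$ proves $\Prf^*_T(\gn{\neg(0=1)}, \overline{y_0})$ and $\neg \Prf^*_T(\gn{0=1}, \overline{k})$ for every $k \le y_0$, hence $\PA \vdash \PR^*_T(\gn{\neg(0=1)}) \preccurlyeq \PR^*_T(\gn{0=1})$ and so $\PA \vdash \neg \PRR_T(\gn{0=1})$ (the paper states exactly this just before the subsection on local reflection principles). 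Your target implication would then yield $T \vdash \Con_T$, contradicting the second incompleteness theorem.

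You correctly sense the obstruction (``the standard $\Proof_T$-codes of $\gn{\neg(0=1)}$ \ldots\ are numerically small''), but you misdiagnose it as a technical difficulty to be overcome by a cleverer pairing or a Guaspari--Solovay fixed point. It is not: no self-referential refinement can place a nonstandard witness below a fixed standard one. The constructions in \cite{Kur16} (and the analogous ones in this paper, e.g.\ Theorem~\ref{RosThm3}) avoid this by using, in place of $0=1$, a $\Sigma_1$ sentence that is itself defined by a fixed point involving the Rosser predicate under construction, so that neither it nor its negation has a standard proof. Your $\Pi_1$ target using $\Con_T$ has the same defect whenever $T \vdash \neg \Con_T$ (which is not excluded by the hypotheses), since then $\gn{\neg \Con_T}$ acquires a standard witness.
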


On the other hand, the following result shows that whether $T + \Rfn(\PRR_T)$ proves $\Con_T$ or not is dependent on the choice of $\PRR_T(x)$. 

\begin{fact}[Kurahashi {\cite[Corollary 5.3]{Kur16}}]\label{RosnProvCon}
There exists a Rosser provability predicate $\PRR_{T}(x)$ of $T$ such that $T + \Rfn(\PRR_{T}) \nvdash \Con_T$. 
Consequently, $T + \Rfn(\PRR_{T})$ and $T + \Rfn(\PR_{T})$ are not deductively equivalent.
\end{fact}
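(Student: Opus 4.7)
The plan is to construct a Rosser provability predicate $\PRR_T(x)$ together with a model $M \models T + \neg \Con_T$ in which every standard instance of $\Rfn(\PRR_T)$ is satisfied. Once this is achieved, $T + \Rfn(\PRR_T) + \neg \Con_T$ is consistent, hence $T + \Rfn(\PRR_T) \nvdash \Con_T$. The ``consequently'' clause then follows, since our construction will keep $\PR_T$ close enough to $\Prov_T$ that $T + \Rfn(\PR_T) \vdash \Con_T$ (via the instance $\PR_T(\gn{0=1}) \to 0=1$ of $\Rfn_{\Pi_1}(\PR_T)$), so the two reflection theories cannot be deductively equivalent.

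First I would fix a countable model $M \models T + \neg \Con_T$, which exists by G\"{o}del's second incompleteness theorem, together with a nonstandard $c \in M$ satisfying $M \models \Proof_T(\gn{0=1}, c)$. By closure under logical consequence, for every standard sentence $\varphi$ there is a nonstandard $M$-proof of $\varphi$ with code roughly $c$ plus a standard offset. My goal is to design a proof predicate $\Prf_T(x, y)$ so that, for each standard $\varphi$ with $M \models \neg \varphi$, the Rosser comparison $\PR_T(\gn{\varphi}) \prec \PR_T(\gn{\neg \varphi})$ fails in $M$.

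The analysis splits into three cases on the $T$-status of $\varphi$. If $T \vdash \varphi$, this case is vacuous, since $M \models \varphi$. If $T \vdash \neg \varphi$, a standard proof of $\neg \varphi$ supplies a standard-sized witness for $\PR_T(\gn{\neg \varphi})$, while any witness for $\PR_T(\gn{\varphi})$ in $M$ is necessarily nonstandard; so, provided $\Prf_T$ retains all standard $T$-proofs, $\PR_T(\gn{\neg \varphi}) \preccurlyeq \PR_T(\gn{\varphi})$ holds automatically and $\PRR_T(\gn{\varphi})$ fails in $M$ by Proposition \ref{prop:wc}. The delicate case is when $\varphi$ is $T$-independent: both sides are witnessed only by nonstandard codes in $M$, and I must arrange that the smaller witness sits on the true side. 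I would achieve this by a self-referential definition of $\Prf_T$ obtained from G\"{o}del's fixed-point lemma that reorders the enumeration using Rosser-style comparisons, so that on the $T$-independent fragment the induced ordering of nonstandard proofs respects truth in $M$.

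The main obstacle is precisely this last case: truth in $M$ cannot be referenced directly inside $\Prf_T$, so the required ``respect for truth'' must be routed through a definable surrogate. A natural route is to stratify the independent fragment by logical complexity and use $\True_{\Sigma_1}$-style partial truth predicates together with an internal structure built around $c$ via arithmetized completeness, thereby defining the preferred order of extracting nonstandard proofs uniformly in the complexity of $\varphi$. Once the construction is in place, I would verify that $\Prf_T$ is a $\Delta_1(\PA)$ single-conclusion proof predicate with $\N \models \exists y\, \Prf_T(\gn{\varphi}, y)$ if and only if $T \vdash \varphi$, and that the resulting $\PRR_T$ satisfies $M \models \Rfn(\PRR_T)$, which gives the theorem.
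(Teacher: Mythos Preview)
Your model-theoretic strategy---build a model $M \models T + \neg \Con_T$ in which every standard instance of $\Rfn(\PRR_T)$ holds---is a legitimate target, but the construction you sketch cannot reach it. The central difficulty is one you half-acknowledge: the proof predicate $\Prf_T(x,y)$ must be a single fixed $\Delta_1(\PA)$ formula written down in the metatheory, with no access to $M$ or to any nonstandard parameter such as your $c$. Fixing $M$ \emph{first} and then ``designing $\Prf_T$ so that the Rosser comparison respects truth in $M$'' is not an available move: the formula comes first, and only afterward may one ask how it behaves in various models. Your proposed surrogates do not close this gap. Partial truth predicates exist for each externally fixed complexity level, but no single $\Delta_1$ formula uniformizes them over all levels, so a stratification by complexity cannot yield one proof predicate that handles every standard sentence at once. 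And ``an internal structure built around $c$ via arithmetized completeness'' presupposes a parameter from $M$ inside the very definition of $\Prf_T$, which is precisely what is forbidden. Different models of $T + \neg \Con_T$ disagree on which independent sentences are true, so no single definable ordering of nonstandard proofs can respect truth in an unspecified $M$; without a concrete, $M$-free $\Delta_1$ mechanism for the independent case, the argument does not go through.

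The paper's route (its alternative proof via Theorem~\ref{RosThm1}) is entirely different and avoids model-theoretic reasoning. The Rosser predicate is defined through a recursive enumeration equipped with a Guaspari--Solovay ``bell'' that provably rings iff $\neg \Con_T$; after the bell rings the enumeration reorders outputs according to the purely syntactic, primitive recursive notion of tautological consequence of the formulas proved so far. The payoff is a proof-theoretic conservation lemma: whenever $T + \Rfn(\PRR_T) \vdash \pi$ with $\pi \in \Pi_1$, already $\PA \vdash \neg \pi \to \neg \Con_T$. Applying this with $\pi = \Con_T^2$ and invoking L\"ob's theorem (together with Goryachev's theorem) yields $T + \Rfn(\PRR_T) \nvdash \Con_T$; no particular model is ever exhibited.
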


For $\PRR_T(x)$ as in Fact \ref{RosProvCon}, by Beklemishev's theorem, $T + \Rfn_{\Gamma}(T)$ is $\Gamma$-conservative over $T + \Rfn_{\Gamma}(\PRR_T)$ for all $\Gamma \in \{\Sigma_n, \Pi_{n+1} \mid n \geq 1\}$. 
On the other hand, for $\PRR_T(x)$ as in Fact \ref{RosnProvCon}, it is not clear whether it has the conservation property or not. 
This situation raised the following problem. 

\begin{prob}[{\cite[Problem 7.1]{Kur16}}]\label{prob:Kur}
Let $\Gamma \in \{\Sigma_n, \Pi_n \mid n \geq 1\}$. 
Is $\Rfn(\PRR_T)$ $\Gamma$-conservative over the theory $T + \Rfn_{\Gamma}(\PRR_T)$ for any Rosser provability predicate $\PRR_T(x)$?
\end{prob}

\section{Provability predicates for which the conservation theorem holds}\label{sec:gen}

In this section, we study provability predicates for which Beklemishev's conservation theorem holds. 
This section consists of three subsections. 
In the first subsection, we generalize Beklemishev's conservation theorem to non-standard provability predicates. 
The second subsection is devoted to investigating the conservation property of Rosser provability predicates $\PRR_T(x)$ such that $T + \Rfn_{\Gamma}(\PRR_T)$ proves $\Con_T$.
In the last subsection, we show the existence of a Rosser provability predicate $\PRR_T(x)$ for which the conservation theorem holds but $T + \Rfn_{\Gamma}(\PRR_T)$ does not prove $\Con_T$.

\subsection{A generalization of the conservation theorem}

We prove that the conservation theorem generally holds for provability predicates satisfying $\D{2}$. 
Beklemishev's original proof of the conservation theorem presented in \cite{Bek97} uses the modal logic $\mathbf{GL}$ of provability, while our proof is simple without detouring modal logic.

For each class $\Gamma$ of formulas, let $\Gamma(T) : = \{\varphi \mid T \vdash \varphi \leftrightarrow \psi$ for some $\psi \in \Gamma\}$. 
If $\PR_T(x)$ satisfies $\D{2}$, then it is easily shown that the theories $T + \Rfn_{\Gamma(T)}(\PR_T)$ and $T + \Rfn_{\Gamma}(\PR_T)$ are deductively equivalent. 
We state our theorem in a slightly general form. 
Basically, we intend $\Theta = \Sigma_1$ and $\Gamma \in \{\Sigma_n, \Pi_{n+1} \mid n \geq 1\}$, but cases such as $\Theta = \Sigma_2$ and $\Gamma = \mathcal{B}(\Sigma_2)$ are also in the scope of our theorem.

\begin{thm}\label{GenCons}
    Let $\Theta$ and $\Gamma$ be any classes of formulas. 
    Suppose that $\PR_T(x)$ satisfies $\D{2}$ and is in $\Theta(T)$. 
    If $\Theta(T) \subseteq \Gamma(T)$ and $\Gamma(T)$ is closed under taking disjunction, then $T + \Rfn(\PR_T)$ is $\Gamma$-conservative over $T + \Rfn_{\Gamma}(\PR_T)$. 
\end{thm}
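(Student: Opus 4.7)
The approach is a direct induction on the number of reflection instances, avoiding any detour through modal logic. By compactness and the deduction theorem, if $T + \Rfn(\PR_T) \vdash \varphi$ with $\varphi \in \Gamma$, there are sentences $\psi_1, \ldots, \psi_k$ such that $T \vdash \bigwedge_{i=1}^{k} (\PR_T(\gn{\psi_i}) \to \psi_i) \to \varphi$. I will show by induction on $k$ that $T + \Rfn_{\Gamma}(\PR_T) \vdash \varphi$, the base case $k = 0$ being immediate.

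For the inductive step at $k+1$, the key observation is that for each fixed $j$, rewriting $(\PR_T(\gn{\psi_j}) \to \psi_j) \to \varphi$ as $\varphi \vee (\PR_T(\gn{\psi_j}) \wedge \neg \psi_j)$ yields
\[
    T + \{\PR_T(\gn{\psi_i}) \to \psi_i\}_{i \neq j} \vdash \varphi \vee \PR_T(\gn{\psi_j}).
\]
Because $\PR_T(x) \in \Theta(T) \subseteq \Gamma(T)$ and $\Gamma(T)$ is closed under disjunction, the sentence $\varphi \vee \PR_T(\gn{\psi_j})$ lies in $\Gamma(T)$, so it is $T$-provably equivalent to some $\Gamma$-sentence. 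Applying the induction hypothesis, which is available because the displayed derivation uses only $k$ reflection instances, then gives $T + \Rfn_{\Gamma}(\PR_T) \vdash \varphi \vee \PR_T(\gn{\psi_j})$ for every $j \in \{1, \ldots, k+1\}$.

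Independently, since $\bigwedge_i \psi_i$ trivially entails each conjunct $\PR_T(\gn{\psi_i}) \to \psi_i$, the original hypothesis collapses to $T \vdash \bigwedge_i \psi_i \to \varphi$, and applying necessitation (available because $\PR_T$ is a provability predicate of $T$) together with iterated $\D{2}$ promotes this to $T \vdash \bigwedge_i \PR_T(\gn{\psi_i}) \to \PR_T(\gn{\varphi})$. Reasoning inside $T + \Rfn_{\Gamma}(\PR_T)$ under the assumption $\neg \varphi$, each of the previously obtained disjunctions forces $\PR_T(\gn{\psi_j})$, so $\bigwedge_j \PR_T(\gn{\psi_j})$ and hence $\PR_T(\gn{\varphi})$ both hold; the $\Gamma$-reflection instance $\PR_T(\gn{\varphi}) \to \varphi$, which belongs to $\Rfn_{\Gamma}(\PR_T)$ because $\varphi \in \Gamma$, then yields $\varphi$, contradicting the assumption and completing the induction.

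The main subtlety the plan must navigate is that with only $\D{2}$ in hand, necessitation is not available in any theory strictly extending $T$, which blocks a naive nesting of the single-instance version of the argument. The device of splitting off each $\psi_j$ separately and recombining all the resulting disjunctions into a single contradiction is exactly what keeps every application of necessitation inside the base theory $T$, where it is legitimate.
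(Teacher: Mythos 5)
Your proposal is correct and follows essentially the same route as the paper's proof: both rest on the observations that $(\PR_T(\gn{\psi_j}) \to \psi_j) \to \chi$ yields $\chi \lor \PR_T(\gn{\psi_j})$, that $\bigwedge_i \psi_i$ already implies all the reflection instances so that necessitation plus $\D{2}$ (applied only inside $T$) gives $\bigwedge_i \PR_T(\gn{\psi_i}) \to \PR_T(\gn{\varphi})$, and that the relevant disjunctions lie in $\Gamma(T)$ so that $\Gamma$-reflection closes the argument. The paper organizes this as a single induction over subsets $X \subseteq [k]$ proving $T + \Rfn_\Gamma(\PR_T) \vdash \bigvee_{i \in [k]\setminus X}\PR_T(\gn{\varphi_i}) \lor \gamma$, whereas you recurse on the number of instances with a varying target sentence, but unwinding your recursion produces the same family of statements.
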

\begin{proof}
Suppose that $\PR_T(x)$ satisfies $\D{2}$, $\PR_T \in \Theta(T)$, $\Theta(T) \subseteq \Gamma(T)$, and $\Gamma(T)$ is closed under taking disjunction. 
Let $\gamma$ be any $\Gamma$ sentence such that $T + \Rfn(\PR_T) \vdash \gamma$. 
We would like to prove $T + \Rfn_{\Gamma}(\PR_T) \vdash \gamma$. 
There exist $k \in \omega$ and $\varphi_0, \ldots, \varphi_{k-1}$ such that
\begin{equation}\label{Start}
    T \vdash \bigwedge_{i < k}(\PR_T(\gn{\varphi_i}) \to \varphi_i) \to \gamma. 
\end{equation}
Let $[k] = \{0, 1, \ldots, k-1\}$. 

  We prove that for any $X \subseteq [k]$, we have
    \[
        T + \Rfn_{\Gamma}(\PR_T) \vdash \bigvee_{i \in [k] \setminus X} \PR_T(\gn{\varphi_i}) \lor \gamma
    \]
    by induction on the cardinality $|X|$ of $X$. 
    For $X = \emptyset$, we have $T \vdash \bigwedge_{i < k} \neg \PR_T(\gn{\varphi_i}) \to \gamma$ by (\ref{Start}). 
    That is, $T \vdash \bigvee_{i \in [k] \setminus \emptyset} \PR_T(\gn{\varphi_i}) \lor \gamma$. 

    Suppose that the statement holds for $l$ and that $|X| = l+1$. 
    For each $j \in X$, we have $|X \setminus \{j\}| = l$, and so by the induction hypothesis, 
    \[
        T + \Rfn_{\Gamma}(\PR_T) \vdash \bigvee_{i \in [k] \setminus (X \setminus \{j\})} \PR_T(\gn{\varphi_i}) \lor \gamma,  
    \]
    and hence, 
    \[
        T + \Rfn_{\Gamma}(\PR_T) \vdash \PR_T(\gn{\varphi_j}) \lor \bigvee_{i \in [k] \setminus X} \PR_T(\gn{\varphi_i}) \lor \gamma. 
    \]
    Thus, we obtain
    \begin{equation}\label{Middle}
        T + \Rfn_{\Gamma}(\PR_T) \vdash \bigwedge_{j \in X} \PR_T(\gn{\varphi_j}) \lor \bigvee_{i \in [k] \setminus X} \PR_T(\gn{\varphi_i}) \lor \gamma. 
    \end{equation}
    
    On the other hand, since $\neg \PR_T(\gn{\varphi_i})$ and $\varphi_j$ respectively imply $\PR_T(\gn{\varphi_i}) \to \varphi_i$ and $\PR_T(\gn{\varphi_j}) \to \varphi_j$, by (\ref{Start}), we have
    \[
        T \vdash \bigwedge_{i \in [k] \setminus X} \neg \PR_T(\gn{\varphi_i}) \land \bigwedge_{j \in X} \varphi_j \to \gamma, 
    \]
    and hence, 
   \[
        T \vdash \bigwedge_{j \in X} \varphi_j \to \bigvee_{i \in [k] \setminus X} \PR_T(\gn{\varphi_i}) \lor \gamma. 
    \]
    By $\D{2}$, we have
   \[
        T \vdash \bigwedge_{j \in X} \PR_T(\gn{\varphi_j}) \to \PR_T\left(\gn{\bigvee_{i \in [k] \setminus X} \PR_T(\gn{\varphi_i}) \lor \gamma} \right). 
    \]
    By combining this with (\ref{Middle}), 
    \[
            T + \Rfn_{\Gamma}(\PR_T) \vdash \PR_T\left(\gn{\bigvee_{i \in [k] \setminus X} \PR_T(\gn{\varphi_i}) \lor \gamma} \right) \lor \bigvee_{i \in [k] \setminus X} \PR_T(\gn{\varphi_i}) \lor \gamma. 
    \]
    By the supposition, $\bigvee_{i \in [k] \setminus X} \PR_T(\gn{\varphi_i}) \lor \gamma$ is a $\Gamma(T)$ sentence. 
    So, we have 
    \[
        T + \Rfn_{\Gamma}(\PR_T) \vdash \PR_T\left(\gn{\bigvee_{i \in [k] \setminus X} \PR_T(\gn{\varphi_i}) \lor \gamma} \right) \to \left(\bigvee_{i \in [k] \setminus X} \PR_T(\gn{\varphi_i}) \lor \gamma \right).
    \]
    Therefore, we obtain
    \[
            T + \Rfn_{\Gamma}(\PR_T) \vdash \bigvee_{i \in [k] \setminus X} \PR_T(\gn{\varphi_i}) \lor \gamma. 
    \]
    This shows that the statement holds for $X$. 

    At last, we have that the statement holds for $X = [k]$. 
    We then conclude $T + \Rfn_{\Gamma}(\PR_T) \vdash \gamma$. 
\end{proof}

\subsection{Rosser-type reflection principles proving $\Con_T$}

We investigate the conservation property of Rosser provability predicates $\PRR_T(x)$ such that $T + \Rfn_{\Gamma}(\PRR_T)$ proves $\Con_T$. 
Beklemishev's conservation theorem is applicable to such Rosser provability predicates as follows. 

\begin{prop}\label{prop:Cons}
Let $\Gamma \in \{\Sigma_{n+1}, \Pi_{n+1} \mid n \geq 1\}$ and suppose $T + \Rfn(\PRR_T) \vdash \Con_T$. 
The following are equivalent: 
\begin{enumerate}
    \item $T + \Rfn(\PRR_T)$ is $\Gamma$-conservative over $T + \Rfn_{\Gamma}(\PRR_T)$. 
    \item $T + \Rfn(\PRR_T)$ is $\Pi_1$-conservative over $T + \Rfn_{\Gamma}(\PRR_T)$. 
    \item $T + \Rfn_{\Gamma}(\PRR_T) \vdash \Con_T$. 
\end{enumerate}
\end{prop}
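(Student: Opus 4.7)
The plan is to establish the cyclic chain of implications $(1) \Rightarrow (2) \Rightarrow (3) \Rightarrow (1)$. The first two steps are essentially immediate from the standing assumption. For $(1) \Rightarrow (2)$: since $\Gamma \in \{\Sigma_{n+1}, \Pi_{n+1} \mid n \geq 1\}$, the inductive definition of the arithmetical hierarchy places every $\Pi_1$ formula syntactically inside $\Gamma$, so $\Gamma$-conservativity specializes to $\Pi_1$-conservativity. For $(2) \Rightarrow (3)$: recall that $\Con_T$ is a $\Pi_1$ sentence provable in $T + \Rfn(\PRR_T)$ by the proposition's hypothesis, so the assumed $\Pi_1$-conservation transfers $\Con_T$ down to $T + \Rfn_{\Gamma}(\PRR_T)$.

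The substantive direction is $(3) \Rightarrow (1)$. My strategy here is to reduce the conservation claim for $\PRR_T$ to Beklemishev's Theorem~\ref{Bek} through the stratified Goryachev characterization recorded in Fact~\ref{Fact:Gor}. Applying Fact~\ref{Fact:Gor} directly to (3) yields the deductive equivalence $T + \Rfn_{\Gamma}(\PRR_T) \equiv T + \Rfn_{\Gamma}(T)$. For the full reflection schemes I will use compactness applied to $T + \Rfn(\PRR_T) \vdash \Con_T$: there exists $N$ with $T + \Rfn_{\Sigma_N}(\PRR_T) \vdash \Con_T$, and for every $m \geq 1$ the theory $T + \Rfn_{\Sigma_{\max(m,N)}}(\PRR_T)$ still proves $\Con_T$. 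Fact~\ref{Fact:Gor} at the level $\Sigma_{\max(m,N)}$ gives $T + \Rfn_{\Sigma_{\max(m,N)}}(\PRR_T) \equiv T + \Rfn_{\Sigma_{\max(m,N)}}(T)$, and collecting over $m$ yields $T + \Rfn(\PRR_T) \equiv T + \Rfn(T)$. Consequently, the $\Gamma$-conservation of $T + \Rfn(\PRR_T)$ over $T + \Rfn_{\Gamma}(\PRR_T)$ is equivalent to the $\Gamma$-conservation of $T + \Rfn(T)$ over $T + \Rfn_{\Gamma}(T)$, which is precisely Beklemishev's Theorem~\ref{Bek}.

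I expect the one point requiring care to be this bootstrap step. Fact~\ref{Fact:Gor} is formulated level by level, and while (3) supplies $\Con_T$ only at the single level $\Gamma$, passing to the global equivalence $T + \Rfn(\PRR_T) \equiv T + \Rfn(T)$ needs $\Con_T$ at every higher $\Sigma_m$. This is the unique place where the standing assumption $T + \Rfn(\PRR_T) \vdash \Con_T$ does real work: compactness supplies a uniform threshold $N$ above which Fact~\ref{Fact:Gor} applies level by level.
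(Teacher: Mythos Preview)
Your argument is correct, but the route for $(3)\Rightarrow(1)$ is more roundabout than the paper's. You establish the full deductive equivalence $T + \Rfn(\PRR_T) \equiv T + \Rfn(T)$ via compactness and a level-by-level application of Fact~\ref{Fact:Gor}, invoking the standing hypothesis $T + \Rfn(\PRR_T) \vdash \Con_T$ to get the threshold $N$. The paper instead observes the one-line implication $T + \Rfn(\Prov_T) \vdash \Rfn(\PRR_T)$, which holds unconditionally because $\PRR_T(\gn{\varphi})$ implies $\Prov_T(\gn{\varphi})$ for every $\varphi$; thus $T + \Rfn(\PRR_T) \vdash \gamma$ already yields $T + \Rfn(\Prov_T) \vdash \gamma$ with no appeal to the standing hypothesis or compactness. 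From there Beklemishev's theorem gives $T + \Rfn_{\Gamma}(\Prov_T) \vdash \gamma$, and a single use of Fact~\ref{Fact:Gor} at level $\Gamma$ (from (3)) finishes. So contrary to your closing remark, the standing assumption is \emph{not} needed for $(3)\Rightarrow(1)$ at all; it is genuinely used only in $(2)\Rightarrow(3)$. Your bootstrap is valid but superfluous.
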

\begin{proof}
$(1 \Rightarrow 2)$: This is trivial because $\Gamma \supseteq \Pi_1$. 

$(2 \Rightarrow 3)$: This is because $\Con_T$ is a $\Pi_1$ sentence. 

$(3 \Rightarrow 1)$: 
Let $\gamma$ be any $\Gamma$ sentence such that $T + \Rfn(\PRR_T) \vdash \gamma$. 
Since $T + \Rfn(\Prov_T) \vdash \Rfn(\PRR_T) \vdash \gamma$, we obtain $T + \Rfn_{\Gamma}(\Prov_T) \vdash \gamma$ by Beklemishev's conservation theorem. 
Since $T + \Rfn_{\Gamma}(\PRR_T) \vdash \Con_T$, we have $T + \Rfn_{\Gamma}(\PRR_T) \vdash \Rfn_{\Gamma}(\Prov_T) \vdash \gamma$ by Goryachev's theorem (Fact \ref{Fact:Gor}). 
\end{proof}

For $\Gamma = \Sigma_1$, the following proposition is proved in the same way as in the proof of Proposition \ref{prop:Cons}. 

\begin{prop}\label{prop:Cons2}
Suppose $T + \Rfn(\PRR_T) \vdash \Con_T$. 
The following are equivalent: 
\begin{enumerate}
    \item $T + \Rfn(\PRR_T)$ is $\mathcal{B}(\Sigma_1)$-conservative over $T + \Rfn_{\Sigma_1}(\PRR_T)$. 
    \item $T + \Rfn(\PRR_T)$ is $\Pi_1$-conservative over $T + \Rfn_{\Sigma_1}(\PRR_T)$. 
    \item $T + \Rfn_{\Sigma_1}(\PRR_T) \vdash \Con_T$. 
\end{enumerate}
\end{prop}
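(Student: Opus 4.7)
The author notes that this proposition is proved ``in the same way'' as Proposition \ref{prop:Cons}, so the plan is to trace the same triangle of implications, substituting the ``Moreover'' clause of Beklemishev's theorem (Theorem \ref{Bek}), which provides $\mathcal{B}(\Sigma_1)$-conservation of $\Rfn(T)$ over $T + \Rfn_{\Sigma_1}(T)$, for the $\Sigma_n$/$\Pi_{n+1}$-conservation clause used there. The easy directions are disposed of first: $(1) \Rightarrow (2)$ follows from the inclusion $\Pi_1 \subseteq \mathcal{B}(\Sigma_1)$, which automatically specializes $\mathcal{B}(\Sigma_1)$-conservation to $\Pi_1$-conservation; and $(2) \Rightarrow (3)$ follows by applying the assumed $\Pi_1$-conservation to the $\Pi_1$ sentence $\Con_T$, using the standing hypothesis $T + \Rfn(\PRR_T) \vdash \Con_T$.

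For the main implication $(3) \Rightarrow (1)$, let $\gamma \in \mathcal{B}(\Sigma_1)$ satisfy $T + \Rfn(\PRR_T) \vdash \gamma$. First I would pass from the Rosser reflection principle to the usual one, concluding $T + \Rfn(\Prov_T) \vdash \gamma$ exactly as in Proposition \ref{prop:Cons}, via $T + \Rfn(\Prov_T) \vdash \Rfn(\PRR_T)$. Applying the ``Moreover'' clause of Theorem \ref{Bek} then yields $T + \Rfn_{\Sigma_1}(\Prov_T) \vdash \gamma$. Finally, hypothesis $(3)$ together with Fact \ref{Fact:Gor} taken with $\Gamma = \Sigma_1$ delivers the deductive equivalence of $T + \Rfn_{\Sigma_1}(\PRR_T)$ and $T + \Rfn_{\Sigma_1}(\Prov_T)$, so $T + \Rfn_{\Sigma_1}(\PRR_T) \vdash \gamma$, as required.

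No genuinely new obstacle is expected: the substitution of the $\mathcal{B}(\Sigma_1)$-conservation refinement for the $\Sigma_n$/$\Pi_{n+1}$-conservation is the only real change from the proof of Proposition \ref{prop:Cons}, and Goryachev's theorem applies verbatim at the $\Sigma_1$ level. The only point that deserves a moment's attention is checking that widening the class from $\Sigma_1$ to $\mathcal{B}(\Sigma_1)$ on the $\Rfn(\PRR_T)$ side does not disturb the two external appeals to Theorem \ref{Bek} and Fact \ref{Fact:Gor}, both of which continue to be applied at the $\Sigma_1$ level where the Beklemishev bound and Goryachev's characterization are available.
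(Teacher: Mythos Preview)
Your proposal is correct and follows exactly the route the paper indicates: it reproduces the triangle of implications from Proposition~\ref{prop:Cons}, replacing the $\Gamma$-conservation clause of Theorem~\ref{Bek} by its $\mathcal{B}(\Sigma_1)$ refinement and invoking Fact~\ref{Fact:Gor} at the $\Sigma_1$ level. There is nothing to add.
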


For $\Gamma = \Pi_1$, we have the following: 

\begin{prop}\label{prop:Cons3}
Suppose $T + \Rfn(\PRR_T) \vdash \Con_T$. 
The following are equivalent: 
\begin{enumerate}
    \item $T + \Rfn(\PRR_T)$ is $\Pi_1$-conservative over $T + \Rfn_{\Pi_1}(\PRR_T)$. 
    \item $T + \Rfn_{\Pi_1}(\PRR_T)$ is inconsistent. 
\end{enumerate}
\end{prop}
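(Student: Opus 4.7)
The direction $(2 \Rightarrow 1)$ is trivial since an inconsistent theory is $\Gamma$-conservative over everything for every class $\Gamma$. My plan for $(1 \Rightarrow 2)$ is to exhibit a specific $\Pi_1$ sentence, namely $\Con_T^2 = \neg \Prov_T(\gn{\neg \Con_T})$, that the full Rosser reflection proves, and then to trace the resulting provability inside $T + \Rfn_{\Pi_1}(T)$ down to an outright contradiction via a Löbian collapse.

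From the hypothesis $T + \Rfn(\PRR_T) \vdash \Con_T$ and compactness, some $n \geq 1$ satisfies $T + \Rfn_{\Sigma_n}(\PRR_T) \vdash \Con_T$; Fact \ref{Fact:Gor} then yields $T + \Rfn_{\Sigma_n}(\PRR_T) \equiv T + \Rfn_{\Sigma_n}(T)$, and the latter contains the $\Sigma_1$-reflection instance $\Prov_T(\gn{\neg \Con_T}) \to \neg \Con_T$, i.e.\ $\Con_T \to \Con_T^2$, so I would conclude $T + \Rfn(\PRR_T) \vdash \Con_T^2$. Now under (1), since $\Con_T, \Con_T^2 \in \Pi_1$, both descend to $T + \Rfn_{\Pi_1}(\PRR_T)$. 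Applying Fact \ref{Fact:Gor} with $\Gamma = \Pi_1$ to $T + \Rfn_{\Pi_1}(\PRR_T) \vdash \Con_T$ gives the equivalence $T + \Rfn_{\Pi_1}(\PRR_T) \equiv T + \Rfn_{\Pi_1}(T)$, hence $T + \Rfn_{\Pi_1}(T) \vdash \Con_T^2$.

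The main obstacle is now to show that $T + \Rfn_{\Pi_1}(T) \vdash \Con_T^2$ already forces inconsistency of $T + \Rfn_{\Pi_1}(T)$. I would use compactness to extract $\Pi_1$ sentences $\pi_1, \ldots, \pi_k$ with $T \vdash \bigwedge_i (\Prov_T(\gn{\pi_i}) \to \pi_i) \to \Con_T^2$, take contrapositives to obtain $T \vdash \Prov_T(\gn{\neg \Con_T}) \to \bigvee_i (\Prov_T(\gn{\pi_i}) \land \neg \pi_i)$, and then, using formalized $\Sigma_1$-completeness on each $\Sigma_1$ sentence $\neg \pi_i$ together with $\D2$, collapse each disjunct to $\neg \Con_T$. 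This yields $T \vdash \Prov_T(\gn{\neg \Con_T}) \to \neg \Con_T$, so Löb's theorem delivers $T \vdash \neg \Con_T$, and a further application of formalized $\Sigma_1$-completeness gives $T \vdash \neg \Con_T^2$. Combined with $T + \Rfn_{\Pi_1}(T) \vdash \Con_T^2$, the theory $T + \Rfn_{\Pi_1}(T)$ is inconsistent, and Fact \ref{Fact:Gor} then transfers this inconsistency back to $T + \Rfn_{\Pi_1}(\PRR_T)$, establishing (2).
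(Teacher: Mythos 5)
Your proposal is correct and follows essentially the same route as the paper: both use $\Con_T^2$ as the witnessing $\Pi_1$ sentence, push it down to $T + \Rfn_{\Pi_1}(\PRR_T)$ via the assumed conservativity, derive $T \vdash \Con_T \to \Con_T^2$, and apply L\"ob's theorem to get $T \vdash \neg\Con_T$ and hence inconsistency. The only cosmetic difference is that you pass through $T + \Rfn_{\Pi_1}(T)$ via Goryachev's equivalence and unwind the standard fact $T + \Con_T \vdash \Rfn_{\Pi_1}(T)$ by hand (compactness, formalized $\Sigma_1$-completeness, and $\D{2}$), whereas the paper invokes that fact directly.
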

\begin{proof}
$(1 \Rightarrow 2)$: 
Since $T + \Rfn(\PRR_T) \vdash \Rfn(\Prov_T) \vdash \Con_T^2$, we get $T + \Rfn_{\Pi_1}(\PRR_T) \vdash \Con_T^2$ by the $\Pi_1$-conservativity. 
    Then, 
\[
    T + \Con_T \vdash \Rfn_{\Pi_1}(\Prov_T) \vdash \Rfn_{\Pi_1}(\PRR_T) \vdash \Con_T^2. 
\]
    Hence, we have $T \vdash \Prov_T(\gn{\neg \Con_T}) \to \neg \Con_T$. 
    By L\"ob's theorem, we have $T \vdash \neg \Con_T$. 
    Therefore, $T + \Rfn_{\Pi_1}(\PRR_T)$ is inconsistent because $\Con_T^2$ implies $\Con_T$. 

$(2 \Rightarrow 1)$: 
Suppose that $T + \Rfn_{\Pi_1}(\PRR_T)$ is inconsistent. 
Then, $T + \Rfn(\PRR_T)$ is also inconsistent and is trivially $\Pi_1$-conservative over $T + \Rfn_{\Pi_1}(\PRR_T)$. 
\end{proof}

The $\Sigma_1$-conservativity of $\Rfn(\PRR_T)$ over $T + \Rfn_{\Gamma}(\PRR_T)$ is studied in Section \ref{sec:Sigma1}.
The following corollary follows from Propositions \ref{prop:Cons}, \ref{prop:Cons2}, and \ref{prop:Cons3} and Fact \ref{RosProvCon}.

\begin{cor}\label{RosCor1}
    There exists a Rosser provability predicate $\PRR_T(x)$ of $T$ satisfying the following two conditions: 
    \begin{enumerate}
        \item For every $\Gamma \in \{\Sigma_n, \Pi_{n+1} \mid n \geq 1\}$, $T + \Rfn(\PRR_T)$ is $\Gamma$-conservative over $T + \Rfn_{\Gamma}(\PRR_T)$. 
        Moreover, $T + \Rfn(\PRR_T)$ is $\mathcal{B}(\Sigma_1)$-conservative over $T + \Rfn_{\Sigma_1}(\PRR_T)$.
        \item If $T + \Con_T$ is consistent, then $T + \Rfn(\PRR_T)$ is not $\Pi_1$-conservative over $T + \Rfn_{\Pi_1}(\PRR_T)$. 
    \end{enumerate}
\end{cor}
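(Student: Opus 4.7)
The plan is to take the Rosser predicate supplied by Fact \ref{RosProvCon} and verify the two conditions of the corollary by feeding it into Propositions \ref{prop:Cons}, \ref{prop:Cons2}, and \ref{prop:Cons3}. First I would fix $\PRR_T(x)$ as in Fact \ref{RosProvCon}, so that $T + \Rfn_{\Sigma_1}(\PRR_T) \vdash \Con_T$ and $T + \Rfn_{\Pi_1}(\PRR_T) \vdash \Con_T$, and moreover $T + \Rfn_{\Gamma}(T)$ is deductively equivalent to $T + \Rfn_{\Gamma}(\PRR_T)$ for every $\Gamma \in \{\Sigma_n, \Pi_n \mid n \geq 1\}$. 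Since $\Rfn(\PRR_T) \supseteq \Rfn_{\Sigma_1}(\PRR_T)$, in particular $T + \Rfn(\PRR_T) \vdash \Con_T$, so the standing hypothesis of Propositions \ref{prop:Cons}, \ref{prop:Cons2}, and \ref{prop:Cons3} is met.

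For item (1), I would observe that every $\Gamma \in \{\Sigma_n, \Pi_{n+1} \mid n \geq 1\}$ contains $\Sigma_1$ (because $\Sigma_1 \subseteq \Sigma_n$ for $n \geq 1$ and $\Sigma_1 \subseteq \Pi_2 \subseteq \Pi_{n+1}$ for $n \geq 1$), whence $T + \Rfn_{\Gamma}(\PRR_T) \supseteq T + \Rfn_{\Sigma_1}(\PRR_T) \vdash \Con_T$. Applying the implication $(3) \Rightarrow (1)$ of Proposition \ref{prop:Cons} yields the desired $\Gamma$-conservativity. The moreover clause follows directly from the implication $(3) \Rightarrow (1)$ of Proposition \ref{prop:Cons2} applied to $T + \Rfn_{\Sigma_1}(\PRR_T) \vdash \Con_T$.

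For item (2), assume $T + \Con_T$ is consistent. By the contrapositive of $(1) \Rightarrow (2)$ in Proposition \ref{prop:Cons3}, it suffices to show that $T + \Rfn_{\Pi_1}(\PRR_T)$ is consistent. For this I would first recall the standard fact that $T + \Rfn_{\Pi_1}(T)$ and $T + \Con_T$ are deductively equivalent: one direction is trivial, and for the other, if $T \vdash \Prov_T(\gn{\pi})$ for a $\Pi_1$ sentence $\pi$, then in $T + \Con_T$, assuming $\neg \pi$ (a $\Sigma_1$ sentence) yields $\Prov_T(\gn{\neg \pi})$ by formalized $\Sigma_1$-completeness, contradicting $\Con_T$. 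Combining this with the deductive equivalence of $T + \Rfn_{\Pi_1}(\PRR_T)$ and $T + \Rfn_{\Pi_1}(T)$ from Fact \ref{RosProvCon} gives the required consistency of $T + \Rfn_{\Pi_1}(\PRR_T)$.

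There is no genuine obstacle here; the argument is essentially bookkeeping on top of the three propositions. The only minor points to check are the class-inclusion observation $\Sigma_1 \subseteq \Gamma$ used in item (1) and the equivalence $T + \Rfn_{\Pi_1}(T) \equiv T + \Con_T$ used in item (2), both of which are routine.
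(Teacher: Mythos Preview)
Your proposal is correct and follows the same route the paper indicates: the corollary is obtained by combining Fact~\ref{RosProvCon} with Propositions~\ref{prop:Cons}, \ref{prop:Cons2}, and \ref{prop:Cons3}. One small remark: Proposition~\ref{prop:Cons} as stated only covers $\Gamma \in \{\Sigma_{n+1}, \Pi_{n+1} \mid n \geq 1\}$, so the case $\Gamma = \Sigma_1$ in item~(1) is not literally an instance of it; however, you immediately obtain the stronger $\mathcal{B}(\Sigma_1)$-conservativity from Proposition~\ref{prop:Cons2}, which subsumes that case, so there is no gap.
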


The paper \cite{Kur16} also investigated a sufficient condition of $\PRR_T(x)$ for $T + \Rfn(\PRR_T)$ to prove $\Con_T$. 
Proposition 5.5 of that paper states that if our logic is formulated so that $\varphi$ and $\neg \neg \varphi$ are identical for all $\LA$-formulas $\varphi$, then $T + \Rfn(\PRR_T) \vdash \Con_T$. 
The proof of this proposition can be divided into two parts: first, if $\varphi$ and $\neg \neg \varphi$ are identical for all $\varphi$, then the following conditions \textsf{C1} and \textsf{C2} hold for all Rosser provability predicates $\PRR_T(x)$, and second, for any Rosser provability predicate $\PRR_T(x)$ satisfying \textsf{C1} and \textsf{C2}, we have $T + \Rfn_{\Sigma_1 \cup \Pi_1}(\PRR_T) \vdash \Con_T$.
\begin{description}
    \item [\textsf{C1}]: $T \vdash \neg \Con_T \to \PRR_T(\gn{\varphi}) \lor \PRR_T(\gn{\neg \varphi})$ for all $\LA$-formulas $\varphi$. 
    \item [\textsf{C2}]: $T \vdash \neg \bigl(\PRR_T(\gn{\varphi}) \land \PRR_T(\gn{\neg \varphi}) \bigr)$ for all $\LA$-formulas $\varphi$. 
\end{description}
We refine the second statement as follows. 

\begin{prop}
    If $\PRR_T(x)$ satisfies \textsf{C1}, then $T + \Rfn_{\Sigma_1 \cup \Pi_1}(\PRR_T) \vdash \Con_T$. 
\end{prop}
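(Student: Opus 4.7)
The plan is to use \textsf{C1} contrapositively: if I can exhibit a single sentence $\rho$ for which both $T + \Rfn_{\Sigma_1 \cup \Pi_1}(\PRR_T) \vdash \neg \PRR_T(\gn{\rho})$ and $T + \Rfn_{\Sigma_1 \cup \Pi_1}(\PRR_T) \vdash \neg \PRR_T(\gn{\neg \rho})$, then the instance of \textsf{C1} at $\rho$, namely $\neg \Con_T \to \PRR_T(\gn{\rho}) \lor \PRR_T(\gn{\neg \rho})$, immediately forces $\Con_T$. So the whole problem reduces to finding such a $\rho$.

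For $\rho$ I would take a G\"odel-style fixed point of $\neg \PRR_T(x)$. Since $\PRR_T(x) \equiv \PR_T(x) \prec \PR_T(\dot{\neg} x)$ is $\Sigma_1$, the formula $\neg \PRR_T(x)$ is $\Pi_1$ by the syntactic clauses, and the standard substitution-based form of the diagonal lemma produces a literally $\Pi_1$ sentence $\rho$ with $T \vdash \rho \leftrightarrow \neg \PRR_T(\gn{\rho})$; then $\neg \rho$ is literally $\Sigma_1$ by the negation clause. The $\Pi_1$ reflection axiom $\PRR_T(\gn{\rho}) \to \rho$ combined with the fixed-point equivalence gives $\PRR_T(\gn{\rho}) \to \neg \PRR_T(\gn{\rho})$, and hence $T + \Rfn_{\Pi_1}(\PRR_T) \vdash \neg \PRR_T(\gn{\rho})$. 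The $\Sigma_1$ reflection axiom $\PRR_T(\gn{\neg \rho}) \to \neg \rho$ together with the fixed point similarly yields $\PRR_T(\gn{\neg \rho}) \to \PRR_T(\gn{\rho})$, so combining this with the previous conclusion gives $T + \Rfn_{\Sigma_1 \cup \Pi_1}(\PRR_T) \vdash \neg \PRR_T(\gn{\neg \rho})$. Feeding both refutations into the \textsf{C1} instance at $\rho$ then yields $\Con_T$ over $T + \Rfn_{\Sigma_1 \cup \Pi_1}(\PRR_T)$, as desired.

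The only point that calls for care is the syntactic complexity of the fixed point: the paper stresses that $\Sigma_n, \Pi_n$ are not closed under logical equivalence, so $\rho$ must be produced literally as a $\Pi_1$ sentence (and $\neg \rho$ literally as a $\Sigma_1$ sentence) rather than merely equivalent to one, which is exactly what the substitution form of diagonalisation delivers. Apart from this bookkeeping, the argument consists of three crisp moves -- $\Pi_1$ reflection for $\rho$, $\Sigma_1$ reflection for $\neg \rho$, and a single instance of \textsf{C1} -- with no serious obstacle.
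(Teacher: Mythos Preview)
Your argument is correct and follows the same overall skeleton as the paper: build a $\Pi_1$ fixed point, use $\Pi_1$ reflection to kill $\PRR_T(\gn{\cdot})$ of it, use $\Sigma_1$ reflection on its negation to kill $\PRR_T(\gn{\neg\cdot})$, and then invoke a single instance of \textsf{C1}. The difference is in the choice of fixed point. You take the obvious Rosser--G\"odel sentence $\rho \leftrightarrow \neg \PRR_T(\gn{\rho})$, whereas the paper uses the more elaborate
\[
\varphi \leftrightarrow \neg \PRR_T(\gn{\varphi}) \land \neg \PRR_T\bigl(\gn{\neg \PRR_T(\gn{\neg \varphi})}\bigr),
\]
and consequently needs an extra pass of $\Pi_1$ reflection (applied to the $\Pi_1$ sentence $\neg \PRR_T(\gn{\neg \varphi})$) before $\Sigma_1$ reflection can finish the job. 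Your route is genuinely shorter: three moves instead of four, and a one-clause fixed point instead of a two-clause one. The paper's version gains nothing extra from the added complexity for this particular proposition; your simplification is a clean improvement. The only delicate point you flag---that $\rho$ must be literally $\Pi_1$ so that $\neg\rho$ is literally $\Sigma_1$---is handled correctly by the substitution form of the diagonal lemma, exactly as the paper does for its own fixed point.
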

\begin{proof}
    By the Fixed Point Lemma, we obtain a $\Pi_1$ sentence $\varphi$ satisfying the following equivalence:
\begin{equation}\label{FP1}
     T \vdash \varphi \leftrightarrow \neg \PRR_T(\gn{\varphi}) \land \neg \PRR_T(\gn{\neg \PRR_T(\gn{\neg \varphi})}). 
\end{equation}
Since $T + \Rfn_{\Pi_1}(\PRR_T) \vdash \PRR_T(\gn{\varphi}) \to \varphi$, we have $T + \Rfn_{\Pi_1}(\PRR_T) \vdash \PRR_T(\gn{\varphi}) \to \neg \PRR_T(\gn{\varphi})$ by (\ref{FP1}), and hence $T + \Rfn_{\Pi_1}(\PRR_T) \vdash \neg \PRR_T(\gn{\varphi})$. 
Thus, we obtain
\begin{equation}\label{eq1}
     T + \Rfn_{\Pi_1}(\PRR_T) \vdash \varphi \leftrightarrow \neg \PRR_T(\gn{\neg \PRR_T(\gn{\neg \varphi})}). 
\end{equation}
Since $\neg \PRR_T(\gn{\neg \varphi})$ is also a $\Pi_1$ sentence, 
\begin{equation}\label{eq2}
     T + \Rfn_{\Pi_1}(\PRR_T) \vdash \PRR_T(\gn{\neg \PRR_T(\gn{\neg \varphi})}) \to \neg \PRR_T(\gn{\neg \varphi}). 
\end{equation}
Since $\neg \varphi$ is a $\Sigma_1$ sentence, we have $T + \Rfn_{\Sigma_1}(\PRR_T) \vdash \PRR_T(\gn{\neg \varphi}) \to \neg \varphi$, and hence $T + \Rfn_{\Sigma_1}(\PRR_T) \vdash \varphi \to \neg \PRR_T(\gn{\neg \varphi})$. 
By combining this with (\ref{eq1}), we get
\[
    T + \Rfn_{\Sigma_1 \cup \Pi_1}(\PRR_T) \vdash \neg \PRR_T(\gn{\neg \PRR_T(\gn{\neg \varphi})}) \to \neg \PRR_T(\gn{\neg \varphi}). 
\]
By combining this with (\ref{eq2}), we have
\[
    T + \Rfn_{\Sigma_1 \cup \Pi_1}(\PRR_T) \vdash \neg \PRR_T(\gn{\neg \varphi}). 
\]
Therefore, 
\[
    T + \Rfn_{\Sigma_1 \cup \Pi_1}(\PRR_T) \vdash \neg \PRR_T(\gn{\varphi}) \land \neg \PRR_T(\gn{\neg \varphi}). 
\]
By the condition \textsf{C1}, we conclude
\[
    T + \Rfn_{\Sigma_1 \cup \Pi_1}(\PRR_T) \vdash \Con_T. \qedhere
\]
\end{proof}

\begin{cor}\label{RosCor2}
    If $\PRR_T(x)$ satisfies \textsf{C1}, then $\PRR_T(x)$ satisfies the following two properties: 
    \begin{enumerate}
        \item For every $\Gamma \in \{\Sigma_{n+1}, \Pi_{n+1} \mid n \geq 1\}$, $T + \Rfn(\PRR_T)$ is $\Gamma$-conservative over $T + \Rfn_{\Gamma}(\PRR_T)$. 
        \item If $T + \Con_T$ is consistent, then $T + \Rfn(\PRR_T)$ is not $\Pi_1$-conservative over $T + \Rfn_{\Pi_1}(\PRR_T)$. 
    \end{enumerate}
\end{cor}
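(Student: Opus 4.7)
The plan is to derive the corollary directly from the preceding proposition together with Propositions~\ref{prop:Cons} and~\ref{prop:Cons3}, in parallel with the derivation of Corollary~\ref{RosCor1}. The preceding proposition gives $T + \Rfn_{\Sigma_1 \cup \Pi_1}(\PRR_T) \vdash \Con_T$, and in particular $T + \Rfn(\PRR_T) \vdash \Con_T$, so the standing hypothesis of both Propositions~\ref{prop:Cons} and~\ref{prop:Cons3} is available throughout.

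For clause (1), I would fix $\Gamma \in \{\Sigma_{n+1}, \Pi_{n+1} \mid n \geq 1\}$ and observe that $\Sigma_1 \cup \Pi_1 \subseteq \Sigma_{n+1} \cap \Pi_{n+1}$, obtained by iterating the inclusion $\Sigma_k \cup \Pi_k \subseteq \Sigma_{k+1} \cap \Pi_{k+1}$ built into the definition of the arithmetical hierarchy. Consequently $\Rfn_{\Sigma_1 \cup \Pi_1}(\PRR_T) \subseteq \Rfn_{\Gamma}(\PRR_T)$, so $T + \Rfn_{\Gamma}(\PRR_T) \vdash \Con_T$, and the implication $(3) \Rightarrow (1)$ of Proposition~\ref{prop:Cons} yields the desired $\Gamma$-conservativity.

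For clause (2), by the equivalence in Proposition~\ref{prop:Cons3} it suffices to show that whenever $T + \Con_T$ is consistent, so is $T + \Rfn_{\Pi_1}(\PRR_T)$. To this end I would verify that $T + \Con_T$ proves every instance of $\Rfn_{\Pi_1}(\PRR_T)$: on the one hand, $T + \Con_T \vdash \Rfn_{\Pi_1}(\Prov_T)$ by the standard contrapositive argument using formalized $\Sigma_1$-completeness applied to $\neg \pi$ together with the fact that consistency rules out simultaneous provability of $\pi$ and $\neg \pi$; on the other hand, the definition of the Rosser predicate gives $\PA \vdash \PRR_T(\gn{\pi}) \to \Prov_T(\gn{\pi})$. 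Chaining these yields $T + \Con_T \vdash \Rfn_{\Pi_1}(\PRR_T)$, and hence the consistency of $T + \Con_T$ transfers to $T + \Rfn_{\Pi_1}(\PRR_T)$. No genuine obstacle is expected: all the substantive work was carried out in the preceding proposition, and what remains is the bookkeeping of hierarchy inclusions in clause (1) and the standard derivation of $\Rfn_{\Pi_1}(\Prov_T)$ from $\Con_T$ in clause (2).
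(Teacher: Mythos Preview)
Your proposal is correct and follows exactly the route the paper intends: the corollary is stated without proof, but by parallel with Corollary~\ref{RosCor1} it is meant to be read off from the preceding proposition together with Propositions~\ref{prop:Cons} and~\ref{prop:Cons3}, precisely as you do. The chain $T + \Con_T \vdash \Rfn_{\Pi_1}(\Prov_T) \vdash \Rfn_{\Pi_1}(\PRR_T)$ you use for clause (2) is also the one the paper invokes inside the proof of Proposition~\ref{prop:Cons3}.
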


From Theorem \ref{GenCons} and Corollary \ref{RosCor2}, we obtained two different sufficient conditions, $\D{2}$ and \textsf{C1}, for Rosser provability predicates to satisfy Beklemishev's conservation theorem for $\Gamma \in \{\Sigma_{n+1}, \Pi_{n+1} \mid n \geq 1\}$. 
Here we consider Rosser provability predicates $\mathrm{Pr}_{g'}^{\mathrm{R}}(x)$ and $\mathrm{Pr}_3^{\mathrm{R}}(x)$, which are proved to exist in \cite[Theorem 4.6]{Kur20} and \cite[Theorem 11]{Kur21}, respectively. 
\begin{itemize}
    \item Theorem 4.6 in \cite{Kur20} states that $\mathrm{Pr}_{g'}^{\mathrm{R}}(x)$ satisfies $\D{2}$ and $\PA \vdash \mathrm{Pr}_{g'}^{\mathrm{R}}(\gn{\neg \varphi}) \to \mathrm{Pr}_{g'}^{\mathrm{R}}(\gn{\neg \mathrm{Pr}_{g'}^{\mathrm{R}}(\gn{\varphi})})$. 
    Also, Proposition 4.5 in \cite{Kur20} shows that such a Rosser provability predicate does not satisfy \textsf{C1}.

    \item Theorem 11 in \cite{Kur21} shows that $\mathrm{Pr}_3^{\mathrm{R}}(x)$ satisfies $\D{3}$ and $\mathbf{M}$: ``if $T \vdash \varphi \to \psi$, then $T \vdash \mathrm{Pr}_3^{\mathrm{R}}(\gn{\varphi}) \to \mathrm{Pr}_3^{\mathrm{R}}(\gn{\psi})$''. 
    The proof of the theorem tells us that the predicate $\mathrm{Pr}_3^{\mathrm{R}}(x)$ also satisfies \textsf{C1}, but it follows from the second incompleteness theorem that $\mathrm{Pr}_3^{\mathrm{R}}(x)$ does not satisfy $\D{2}$. 
\end{itemize}
From these facts, we obtain that the conditions $\D{2}$ and \textsf{C1} are generally incomparable with respect to Rosser provability predicates. 
In the next subsection, we also prove the existence of a Rosser provability predicate which satisfies $\D{2}$ but does not satisfy \textsf{C1}. 

Recently, the condition \textsf{C2} has also been studied. 
It is easy to see that every Rosser provability predicate satisfying $\D{2}$ also satisfies \textsf{C2}. 
It is proved in \cite[Theorem 4]{Kur21} that if a provability predicate $\PR_T(x)$ satisfies $\D{3}$ and $\mathbf{M}$, then there exists a sentence $\varphi$ such that $T \nvdash \neg \bigl(\PR_T(\gn{\varphi}) \land \PR_T(\gn{\neg \varphi}) \bigr)$. 
So, for example, the predicate $\mathrm{Pr}_3^{\mathrm{R}}(x)$ does not satisfy \textsf{C2}. 
In \cite{KK23}, the authors studied the modal logical aspect of Rosser provability predicates satisfying $\mathbf{M}$ and \textsf{C2}. 

\subsection{Rosser-type reflection principles not proving $\Con_T$}

Rosser provability predicates $\PRR_T(x)$ in Corollaries \ref{RosCor1} and \ref{RosCor2} satisfy $T + \Rfn(\PRR_T) \vdash \Con_T$, and their conservation properties are based on Beklemishev's conservation theorem for $\Prov_T(x)$.
On the other hand, it follows from our Theorem \ref{GenCons} that the conservation property also holds for Rosser provability predicates satisfying $\D{2}$. 
The existence of Rosser provability predicates satisfying $\D{2}$ was in fact proved by Bernardi and Montagna \cite{BM84} and Arai \cite{Ara90}. 
Here, we prove the existence of a Rosser provability predicate $\PRR_T(x)$ satisfying more additional properties: 
the conservation theorem holds for $\PRR_T(x)$ and it satisfies the condition \textsf{C2}, but $T + \Rfn(\PRR_T) \nvdash \Con_T$. 
This gives an alternative proof of Fact \ref{RosnProvCon}. 
Also, unlike \textsf{C1}, the condition \textsf{C2} does not contribute to the provability of $\Con_T$ in $T + \Rfn (\PRR_{T})$. 
Moreover, our predicate $\PRR_T(x)$ satisfies that $T + \Rfn(\PRR_T)$ is $\Pi_1$-conservative over $\PA + \Con_T$. 
We do not know whether $\PA + \Con_T$ can be replaced by $T + \Rfn_{\Pi_1}(\PRR_T)$.

\begin{thm}\label{RosThm1}
There exists a Rosser provability predicate $\PRR_T(x)$ satisfying the following conditions: 
\begin{enumerate}
    \item For any $\Gamma \in \{ \Sigma_{n}, \Pi_{n+1} \mid n \geq 1\}$, we have that $T + \Rfn (\PRR_{T})$ is $\Gamma$-conservative over $T+ \Rfn_{\Gamma} (\PRR_{T})$. 
    \item $T + \Rfn(\PRR_T)$ is $\Pi_1$-conservative over $\PA + \Con_T$. 
    \item $T + \Rfn (\PRR_{T}) \nvdash \Con_T$.
    \item $\PRR_T(x)$ satisfies \textsf{C2}. 
\end{enumerate}
\end{thm}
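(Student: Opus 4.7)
The plan is to construct a Rosser provability predicate $\PRR_T(x)$ that satisfies $\D{2}$; properties (1) and (4) then follow essentially for free. For (1), Theorem \ref{GenCons} with $\Theta = \Sigma_1$ and the relevant $\Gamma$ delivers the $\Gamma$-conservation, once one notes that any Rosser predicate is $\Sigma_1$ and $\Sigma_1(T) \subseteq \Gamma(T)$ with $\Gamma(T)$ closed under disjunction for each $\Gamma$ in the list. For (4), any Rosser predicate with $\D{2}$ satisfies \textsf{C2}: otherwise, applying $\D{2}$ to the tautology $\varphi \to (\neg\varphi \to 0=1)$ would force $\PRR_T(\gn{0=1})$, contradicting the standard $\PA \vdash \neg\PRR_T(\gn{0=1})$.

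For the construction itself I would adapt the Bernardi--Montagna / Arai template. Build $\Prf_T(x,y)$ by a stage-by-stage recursion: at each stage, accept the next candidate witness only if doing so is consistent with the Rosser ordering built so far, and \emph{deliberately} insert compatible witnesses for the modus-ponens consequences of previously-accepted witnesses. The design constraint is that whenever the partial predicate forces $\PRR_T(\gn{\varphi})$ and $\PRR_T(\gn{\varphi \to \psi})$, a witness for $\psi$ is supplied that precedes any subsequent witness for $\neg\psi$; this is what makes $\D{2}$ provable while keeping the predicate $\Sigma_1$. For (3), namely $T + \Rfn(\PRR_T) \nvdash \Con_T$, I would overlay a Goryachev-style fixed-point diagonalization on the recursion, arranging a suitable Rosser sentence so that the usual witness-comparison argument survives unchanged and blocks the derivation of $\Con_T$ from the reflection scheme, much as in Fact \ref{RosnProvCon}.

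The main obstacle is (2), the $\Pi_1$-conservation of $T + \Rfn(\PRR_T)$ over $\PA + \Con_T$. My plan here is a model-theoretic argument: given a $\Pi_1$ sentence $\pi$ with $\PA + \Con_T \nvdash \pi$, take a model $M \models \PA + \Con_T + \neg\pi$; because $\Con_T$ holds in $M$, the internal equivalence $\PRR_T(\gn{\varphi}) \leftrightarrow \Prov_T(\gn{\varphi})$ is available inside $M$ for each standard $\varphi$, so any finite conjunction of $\PRR_T$-reflection instances reduces, modulo $M$, to $\Prov_T$-reflection instances whose $\Pi_1$-content is already captured by $\Con_T$. The delicate step will be to verify that the deliberate insertion of witnesses used to enforce $\D{2}$ does not smuggle in $\Pi_1$ information beyond what $\Con_T$ supplies; this is likely to force a careful tuning of the construction so that the modus-ponens closure only adds witnesses whose existence is already provable from $\PA + \Con_T$, after which $M$ can be extended to a model of $T + \Rfn(\PRR_T) + \neg\pi$ by the usual arithmetized completeness technique.
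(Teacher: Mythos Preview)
Your reductions of (1) and (4) to $\D{2}$ are correct and match the paper. The real problem is your plan for (2), and through it your plan for (3).

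Your model-theoretic sketch for (2) cannot work. You start from $M \models \PA + \Con_T + \neg\pi$ and hope to make $M$ (or an extension) satisfy $T + \Rfn(\PRR_T)$. But for any construction along the Bernardi--Montagna/Arai lines, you will have $\PA \vdash \forall x\,(\Prov_T(x) \leftrightarrow \PR_T(x))$, and hence $\PA + \Con_T \vdash \PRR_T(\gn{\varphi}) \leftrightarrow \Prov_T(\gn{\varphi})$ for each $\varphi$. So inside any model of $\Con_T$, the scheme $\Rfn(\PRR_T)$ collapses to $\Rfn(\Prov_T)$. Now take $\pi := \Con_T^2$: since $\Rfn(\Prov_T) \vdash \Con_T^2$, no model of $\Con_T$ can simultaneously satisfy $\Rfn(\PRR_T)$ and $\neg\Con_T^2$. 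Thus a witnessing model of $T + \Rfn(\PRR_T) + \neg\Con_T^2$ must satisfy $\neg\Con_T$, and your starting point $M \models \Con_T$ leads nowhere. The sentence ``the $\Pi_1$-content is already captured by $\Con_T$'' is exactly what is false here, and no amount of tuning the witness-insertion changes this, because the collapse $\PRR_T \equiv \Prov_T$ under $\Con_T$ is forced by the very definition of a Rosser predicate once $\PR_T \equiv \Prov_T$.

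The paper's approach is proof-theoretic and builds (2) directly into the construction. Beyond the Arai-style tautological-consequence machinery for $\D{2}$, Procedure~1 carries an extra trigger: whenever the $m$-th $T$-proof is a sentence of the shape
\[
\bigwedge_{i<n}\bigl(\PRR_e(\gn{\psi_i}) \to \psi_i\bigr) \to \pi
\]
with $\pi \in \Pi_1$ and some $r \le m$ witnessing $\neg\pi$, the bell rings. One then shows $\PA \vdash \exists x\,\Bell_e(x) \leftrightarrow \neg\Con_T$, and (2) follows almost immediately: if $T + \Rfn(\PRR_e) \vdash \pi$, then in $\PA$ from $\neg\pi$ the trigger fires, whence $\neg\Con_T$; contrapositively $\PA + \Con_T \vdash \pi$. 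Property (3) is not obtained by a separate diagonalization at all; it is \emph{derived} from (2): if $T + \Rfn(\PRR_e) \vdash \Con_T$ then by Goryachev $T + \Rfn(\PRR_e) \vdash \Con_T^2$, so by (2) $\PA \vdash \Con_T \to \Con_T^2$, and L\"ob's theorem yields $T \vdash \neg\Con_T$, a contradiction. So your ``overlay a Goryachev-style fixed point'' step is both unnecessary and aimed at the wrong target; the missing idea is the reflection-to-false-$\Pi_1$ trigger, which is what simultaneously delivers (2) and (3).
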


Before proving the theorem, we prepare some definitions. 
An $\LA$-formula is said to be a \textit{propositionally atomic} if it is either atomic or a quantified formula. 
Note that every $\LA$-formula is a Boolean combination of propositionally atomic formulas. 
For each propositionally atomic formula $\varphi$, we prepare a propositional variable $p_{\varphi}$. 
We define the primitive recursive injection $I$ from the set of all $\LA$-formulas to propositional formulas as follows: 
\begin{itemize}
\item $I(\varphi) : \equiv p_{\varphi}$ for any propositionally atomic $\varphi$,
\item $I(\neg \varphi) : \equiv \neg I(\varphi)$, 
\item $I(\varphi \circ \psi) :\equiv= I(\varphi) \circ I(\psi)$ for $\circ \in \{ \wedge, \vee, \to \}$.
\end{itemize}
Let $X$ be a finite set of $\LA$-formulas. 
An $\LA$-formula $\varphi$ is a \textit{tautological consequence} (\textit{t.c.}) of $X$ if $I(\bigwedge X \to \varphi)$ is a tautology.
Note that for any formula $\varphi$ and finite set $X$ of formulas, whether $\varphi$ is a t.c.~of $X$ is primitive recursively determined. 
For each $m \in \omega$, let $P_{T,m} := \{ \varphi \mid \N \models \exists y \leq \num{m}\, \Proof_{T}(\gn{\varphi}, y)\}$. 
We may assume that $\PA$ can prove basic facts about these sets and notions. 
For example, $\PA$ proves ``If $\varphi$ is a t.c.~of $P_{T, x}$, then $\varphi$ is $T$-provable''. 
The idea of constructing Rosser provability predicates satisfying $\D{2}$ using truth assignments of propositional logic is due to Arai \cite{Ara90}, and the idea of using t.c.~is due to Kurahashi \cite{Kur20}.

We are ready to prove Theorem \ref{RosThm1}. 

\begin{proof}
We define a $\Delta_1(\PA)$-definable function $e(x)$ and a sequence $\{ k_{m} \}_{m \in \omega}$ of numbers simultaneously in stages.
Let $\PR_{e}(x)$ and $\PRR_{e}(x)$ be the formulas $\exists y (x=e(y))$ and $\PR_{e}(x) \prec \PR_{e}(\dot{\neg} x)$, respectively. 
By applying the formalized recursion theorem, we can use these formulas in the definition of $e$. 
We would like to prove that the formula $\PRR_{e}(x)$ witnesses the statement of our theorem. 

The definition of the function $e$ consists of Procedures 1 and 2. 
The definition starts with Procedure 1. 
In Procedure 1, $e$ outputs $\LA$-formulas in stages referring to $T$-proofs based on the proof predicate $\Proof_T(x, y)$. 
A bell is prepared and may ring in this Procedure. 
After the bell rings, the definition goes to Procedure 2. 
In Procedure 2, $e$ outputs all formulas. 
Such a definition of preparing a bell originated in Guaspari and Solovay \cite{GS79}.    

We define the the function $e$ as follows:
Let $k_{0} : =0$. 

\vspace{0.1in}
\textsc{Procedure 1}: The bell has not yet rung.

Stage $1. m$: If $0=1$ is a t.c.~of $P_{T,m}$, then ring the bell and go to Procedure 2.

If $0=1$ is not a t.c.~of $P_{T,m}$, then we distinguish the following two cases:
\begin{itemize}
    \item If $P_{T, m} = P_{T, m-1}$, define $k_{m+1} : = k_m$ and go to Stage $1.(m+1)$. 
    \item If $\varphi \in P_{T, m} \setminus P_{T, m-1}$, we distinguish the following two cases:
\begin{enumerate}
\item[(i)]: If there exist a number $n$, a $\Pi_1$ sentence $\pi$, and distinct formulas $\psi_{0}, \ldots ,\psi_{n-1}$ such that $\varphi$ is of the form 
\[
    \bigwedge_{i < n} \bigl( \PRR_{e}(\gn{\psi_{i}}) \to \psi_{i} \bigr) \to \pi
\]
and there exists a witness $r \leq m$ of the $\Sigma_1$ sentence $\True_{\Sigma_1}(\gn{\neg \pi})$, then ring the bell and go to Procedure 2.

\item[(ii)]: Otherwise, define $e(k_m) : = \varphi$ and $k_{m+1} : = k_{m} +1$. 
Go to Stage $1.(m+1)$.
\end{enumerate} 
\end{itemize}

\vspace{0.1in}
\textsc{Procedure 2}: The bell has rung at Stage $1. m$. 
We define a sequence $\{t_s\}$ of numbers and values $e(k_m), e(k_m + 1), e(k_m + 2), \ldots$ in stages. 
Let $\{ \xi_{s} \}$ be the repetition-free sequence of all $\LA$-formulas in ascending order of G\"odel numbers. 
Define $t_{0} : =0$.

Stage $2.s$ : We distinguish the following three cases:
\begin{enumerate}
\item[(i'):] If $\xi_{s}$ is a t.c.~of $P_{T,m-1}$, then define $e(k_{m} + t_{s}) : = \xi_{s}$ and $t_{s+1} : = t_{s} + 1$. 
Go to Stage $2. (s+1)$.

\item[(ii'):] If $\xi_{s}$ is not a t.c.~of $P_{T,m-1}$ but $\neg \xi_{s}$ is a t.c.~of $P_{T,m-1}$, then define $e(k_{m} + t_{s}) : = \neg \xi_{s}$, $e(k_{m} + t_{s} + 1) : = \xi_{s}$, and $t_{s+1} : = t_{s} +2$. 
Go to Stage $2. (s+1)$.

\item[(iii'):] If neither $\xi_{s}$ nor $\neg \xi_{s}$ is a t.c.~of $P_{T,m-1}$, then for every $0 \leq a \leq m+1$, define $e(k_{m} + t_{s} + a) : = \overbrace{\neg \ldots \neg}^{m+1-a} \xi_{s}$ and $t_{s+1} : = t_{s} + m + 2$. 
Go to Stage $2. (s+1)$. 
\end{enumerate}

We have finished the definition of $e$. 
Let $\Bell_e(x)$ be an $\LA$-formula saying ``the bell of $e$ rings at Stage $1.x$''. 
We prove the properties of the function $e$ in the following claims. 

\begin{cl}\label{cl:bell1}
$\PA \vdash \exists x\, \Bell_e(x) \leftrightarrow \neg \Con_T$.
\end{cl}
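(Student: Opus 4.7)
The plan is to prove both directions of the biconditional, reasoning inside $\PA$.

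The direction $\neg \Con_T \to \exists x\, \Bell_e(x)$ is straightforward. If $T \vdash 0=1$, then there is a proof code $y$ with $\Proof_T(\gn{0=1}, y)$, and for any $m \geq y$ we get $0=1 \in P_{T,m}$, so $0=1$ is trivially a tautological consequence of $P_{T,m}$. Hence the bell rings at some stage $\leq 1.m$, witnessing $\exists x\, \Bell_e(x)$.

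For the converse $\exists x\, \Bell_e(x) \to \neg \Con_T$, I would fix the least $m$ with $\Bell_e(m)$. If the bell rings because $0=1$ is a t.c.~of $P_{T,m}$, then $T \vdash 0=1$, since every element of $P_{T,m}$ is $T$-provable and $T$-provability is closed under tautological consequence; so $\neg \Con_T$. Otherwise the bell rings via case (i), yielding some $\varphi \equiv \bigwedge_{i<n}(\PRR_e(\gn{\psi_i}) \to \psi_i) \to \pi$ in $P_{T,m} \setminus P_{T,m-1}$ and a witness $r \leq m$ for $\True_{\Sigma_1}(\gn{\neg \pi})$. The strategy is to derive $T \vdash \pi$, which together with $T \vdash \neg \pi$ (obtained by formalized $\Sigma_1$-completeness applied to the true $\Sigma_1$ sentence $\neg \pi$) yields $T$ inconsistent, i.e., $\neg \Con_T$.

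To obtain $T \vdash \pi$, I would prove $T \vdash \PRR_e(\gn{\psi_i}) \to \psi_i$ for each $i < n$ by case analysis on whether $\psi_i$ is a tautological consequence of $P_{T,m}$. If yes, then $T \vdash \psi_i$ and the implication is trivial. If no, then $\psi_i \notin P_{T,m}$ and $\psi_i$ is not a t.c.~of $P_{T,m-1}$, so by minimality of $m$ the sentence $\psi_i$ has not been output during Procedure 1 at any stage $\leq 1.m$. In Procedure 2, the processing of $\xi_s = \psi_i$ then falls under case (ii') or case (iii'), both of which place $\neg \psi_i$ at an identifiable position strictly before the unique output of $\psi_i$: at $k_m + t_s$ vs.~$k_m + t_s + 1$ in (ii'), and at $k_m + t_s + m$ vs.~$k_m + t_s + m + 1$ in (iii'). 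This gives an explicit $\Sigma_1$ witness from which $T$ proves $\neg \PRR_e(\gn{\psi_i})$, making the implication hold vacuously. Conjoining over all $i$ and applying modus ponens to $T \vdash \varphi$ gives $T \vdash \pi$.

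The main obstacle is formalizing the positional bookkeeping inside $\PA$. One must verify that the recursive definition of $e$ and the relevant auxiliary facts, namely that propositional consistency of $P_{T,m-1}$ is forced by the minimality of $m$ (else $0=1$ would be a t.c.~of $P_{T,m-1}$ and the bell would ring earlier), and that the first output of $\psi_i$ cannot have occurred in Procedure 1 when $\psi_i \notin P_{T,m}$, are provable in $\PA$. This ensures that $\neg \PRR_e(\gn{\psi_i})$ appears as a $T$-provable consequence of a finite $\Sigma_1$ witness rather than via its unhelpful full $\Pi_1$ form. The remaining manipulations are routine.
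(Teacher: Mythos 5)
Your overall strategy is the same as the paper's: the right-to-left direction via $0=1$ becoming a tautological consequence of some $P_{T,m}$, and the left-to-right direction by showing $T \vdash \PRR_e(\gn{\psi_i}) \to \psi_i$ for each $i$ (splitting on whether $\psi_i$ is a tautological consequence, and otherwise producing a $\Sigma_1$ witness that $\neg\psi_i$ is output before $\psi_i$, imported into $T$ by formalized $\Sigma_1$-completeness), then concluding $T \vdash \pi$ and contradicting $T \vdash \neg\pi$. However, your positional bookkeeping for case (iii') has a genuine gap. You claim that when $\psi_i$ falls under (iii'), the construction places $\neg\psi_i$ at $k_m+t_s+m$ and $\psi_i$ at $k_m+t_s+m+1$. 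This is only correct when $\psi_i$ is itself not a negated formula. In general $\psi_i \equiv \overbrace{\neg\cdots\neg}^{c}\xi_s$ for some propositionally non-negated $\xi_s$ and some $c \geq 0$, and the \emph{first} output of $\psi_i$ in Procedure 2 occurs already at the stage processing $\xi_s$ (not at the later stage where $\psi_i$ itself appears in the enumeration), namely at position $k_m+t_s+(m+1-c)$ with $\neg\psi_i$ at position $k_m+t_s+(m-c)$. Two things must then be checked that your proposal does not address: (a) that $\psi_i$ is not produced at any stage $2.s_0$ with $s_0 < s$, and (b) that $c+1 \leq m+1$, so that the tower of negations built at stage $2.s$ is tall enough to include $\neg\psi_i$ at all. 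Point (b) is not automatic: the argument takes place inside $\PA$, so $\psi_i$ may be a nonstandard formula with arbitrarily many leading negations. The paper closes this by observing that $m$ codes a $T$-proof of a formula containing $\psi_i$, hence the G\"odel number of $\psi_i$ (and a fortiori its negation depth) is below $m$. This bound is exactly why the construction outputs $m+2$ iterated negations in case (iii') rather than just the pair $\neg\xi_s,\xi_s$; without it, the witness comparison $\PR_e(\gn{\neg\psi_i}) \preccurlyeq \PR_e(\gn{\psi_i})$ cannot be established.

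A second, smaller imprecision: you run the tautological-consequence dichotomy over $P_{T,m}$, whereas Procedure 2 branches on tautological consequence of $P_{T,m-1}$. Your ``yes'' branch still works (a t.c.\ of $P_{T,m}$ is $T$-provable), and your ``no'' branch correctly implies non-t.c.\ of $P_{T,m-1}$, so this does not break the argument, but the case split driving (i')--(iii') is the one over $P_{T,m-1}$ and should be stated as such when you verify which case of Procedure 2 applies.
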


\begin{proof}
Argue in $\PA$. 

$(\rightarrow)$: Suppose that the bell rings at Stage $1.m$. 
If $0=1$ is a t.c.~of $P_{T,m}$, then $0=1$ is provable in $T$ and $T$ is inconsistent. 
So, it suffices to consider the case that the bell rings because of Case (i) in Procedure 1. 

Suppose that we have numbers $n >0$ and $r \leq m$, a $\Pi_1$ sentence $\pi$, and some distinct formulas $\varphi_{0}, \ldots ,\varphi_{n-1}$ such that $m$ is a $T$-proof of $\bigwedge_{i<n} \bigl( \PRR_{e}(\gn{\varphi_{i}}) \to \varphi_{i} \bigr) \to \pi$ and $r$ witnesses $\True_{\Sigma_1}(\gn{\neg \pi})$. 
We prove the following subclaim.

\begin{scl*}
For each $i < n$, $\varphi_{i}$ is a t.c.~of $P_{T,m-1}$ or $\PR_{e}(\gn{\neg \varphi_{i}}) \preccurlyeq \PR_{e}(\gn{\varphi_{i}})$ holds. 
\end{scl*}
\begin{proof}
Suppose that $\varphi_{i}$ is not a t.c.~of $P_{T,m-1}$. 
We then have that $\varphi_{i} \notin P_{T, m-1}$ and thus $\varphi_{i}$ is not output by $e$ in Procedure 1. 
We distinguish the following two cases.

\paragraph{Case 1:} $\neg \varphi_{i}$ is a t.c.~of $P_{T,m-1}$. \\
We find $s$ such that $\xi_{s} \equiv \varphi_{i}$. 
We have $e(k_{m}+t_{s})= \neg \varphi_{i}$ by (ii'). 
We show that $e$ does not output $\varphi_{i}$ before Stage $2.s$.
It suffices to show that $\varphi_{i}$ is not output in Stage $2.s_0$ for any $s_0 < s$ by any of the cases (i'), (ii'), and (iii'). 
\begin{itemize}
    \item Since $\varphi_{i}$ is not a t.c.~of $P_{T, m-1}$, we have that $e$ does not output $\varphi_{i}$ by (i'). 

    \item Let $s_0 < s$ be such that the condition of (ii') is met, then we have that $e(k_m + t_{s_0}) = \neg \xi_{s_0}$ and $e(k_m + t_{s_0} + 1) = \xi_{s_0}$. 
    Since $\neg \xi_{s_0}$ is a t.c.~of $P_{T, m-1}$ but $\varphi_{i}$ is not, we have that $\varphi_{i} \not \equiv \neg \xi_{s_0}$. 
    Also, we have $\varphi_{i} \not \equiv \xi_{s_0}$ because $s_0 < s$. 
    Therefore, $\varphi_{i}$ is not output in Stage $2.s_0$ by (ii'). 

    \item Let $s_0 < s$ be such that $\varphi_{i} \equiv \overbrace{\neg \ldots \neg}^{c} \xi_{s_0}$ for some $c \leq m+1$. 
Since $\neg \varphi_{i}$ is a t.c.~of $P_{T, m-1}$, we have that either $\xi_{s_0}$ or $\neg \xi_{s_0}$ is a t.c.~of $P_{T,m-1}$. 
Thus, the condition of (iii') does not met for $s_0$. 
Hence, $e$ does not output $\varphi_{i}$ by (iii') at Stage $2.s_0$. 
\end{itemize}
We have shown that $\PR_{e}(\gn{\neg \varphi_{i}}) \preccurlyeq \PR_{e}(\gn{\varphi_{i}})$ holds.

\paragraph{Case 2:} $\neg \varphi_{i}$ is not a t.c.~of $P_{T,m-1}$. \\
We find $s$ such that $\xi_{s}$ is not a negated formula and $\varphi_{i} \equiv \overbrace{\neg \ldots \neg}^{c} \xi_{s}$ for some $c$.
Then, for any $p < s$, $\xi_p$ is not $\varphi_{i}$, and moreover $\varphi_{i}$ is not obtained by adding negation symbols to $\xi_{p}$ .
Thus, $e$ does not output $\varphi_{i}$ before Stage $2.s$. 
Since neither $\varphi_{i}$ nor $\neg \varphi_{i}$ is a t.c.~of $P_{T,m-1}$, for every $a \leq m+1$, $e(k_{m}+t_{s} +a) = \overbrace{\neg \ldots \neg}^{m+1-a} \xi_{s}$ holds by (iii'). 
Since $m$ is a $T$-proof of $\bigwedge_{i<n} \bigl( \PRR_{e}(\gn{\varphi_{i}}) \to \varphi_{i} \bigr) \to \pi$, the G\"odel number of $\varphi_i$ is smaller than $m$, and thus we obtain that $c+1 \leq m+1$. 
We obtain 
\[
    e(k_{m} + t_{s} + m -c) = \overbrace{\neg \ldots \neg}^{c+1} \xi_{s} = \neg \varphi_{i}
\]
and
\[
    e(k_{m} + t_{s} + m -c +1) = \overbrace{\neg \ldots \neg}^{c} \xi_{s} = \varphi_{i}.
\]
It follows that $\PR_{e}(\gn{\neg \varphi_{i}}) \preccurlyeq \PR_{e}(\gn{\varphi_{i}})$ holds.
\end{proof}

If $\varphi_{i}$ is a t.c.~of $P_{T,m-1}$, then $\varphi_{i}$ is provable in $T$. 
If $\varphi_{i}$ is not a t.c.~of $P_{T, m-1}$, then by the subclaim, we have that $\PR_{e}(\gn{\neg \varphi_{i}}) \preccurlyeq \PR_{e}(\gn{\varphi_{i}})$ holds. 
By formalized $\Sigma_1$-completeness, we have that $\PR_{e}(\gn{\neg \varphi_{i}}) \preccurlyeq \PR_{e}(\gn{\varphi_{i}})$ is provable in $T$. 
By witness comparison argument, $\neg \PRR_{e}(\gn{\varphi_{i}})$ is also provable in $T$. 
Therefore, $\bigwedge_{i<n} \bigl( \PRR_{e}(\gn{\varphi_{i}}) \to \varphi_{i} \bigr)$ is provable in $T$. 
Since $\bigwedge_{i<n} \bigl( \PRR_{e}(\gn{\varphi_{i}}) \to \varphi_{i} \bigr) \to \pi$ is $T$-provable, we obtain that $\pi$ is also provable in $T$. 

On the other hand, $r$ is a witness of $\True_{\Sigma_1}(\gn{\neg \pi})$, and thus $\neg \pi$ holds. 
Since $\neg \pi$ is a $\Sigma_1$ sentence, $\neg \pi$ is provable in $T$ by formalized $\Sigma_1$-completeness. 
We conclude that $T$ is inconsistent. 

$(\leftarrow)$: Suppose that $T$ is inconsistent. 
Let $m$ be such that $0=1$ is a t.c.~of $P_{T,m}$. 
Then, it follows that the bell must ring before Stage $1.(m+1)$.
\end{proof}

By the following claim, we have that the formulas $x = e(y)$, $\PR_{e}(x)$, and $\PRR_{e}(x)$ are a proof predicate of $T$, a provability predicate of $T$, and Rosser provability predicate of $T$, respectively. 

\begin{cl}\label{cl:equiv1}
$\PA \vdash \forall x \bigl( \Prov_{T}(x) \leftrightarrow \PR_{e}(x)\bigr)$.
\end{cl}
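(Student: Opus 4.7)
The plan is to argue in $\PA$ and prove both implications by unwinding the definition of $e$, relying crucially on Claim \ref{cl:bell1} to handle the case when the bell has rung.

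For the direction $\Prov_T(x) \to \PR_e(x)$, I split on whether the bell ever rings. If it does, then the computation enters Procedure 2 at some Stage $1.m$, and I verify that in each of the three sub-cases (i'), (ii'), (iii') of Stage $2.s$ the formula $\xi_s$ itself is output by $e$: directly as $e(k_m + t_s)$ in (i'), as $e(k_m + t_s + 1)$ in (ii'), and as $e(k_m + t_s + m + 1)$ (the $a = m+1$ value) in (iii'). Since $\{\xi_s\}$ enumerates all $\LA$-formulas, every formula lies in the range of $e$. If, on the other hand, the bell never rings, then given $\Prov_T(\varphi)$ I take the least $T$-proof $m_\varphi$ of $\varphi$, so that $\varphi \in P_{T, m_\varphi} \setminus P_{T, m_\varphi - 1}$. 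At Stage $1.m_\varphi$, neither the condition that $0 = 1$ is a t.c.~of $P_{T, m_\varphi}$ nor the condition of Case (i) can hold, since either would ring the bell. Hence Case (ii) applies and $e(k_{m_\varphi}) = \varphi$.

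For the converse $\PR_e(x) \to \Prov_T(x)$, suppose $\varphi = e(y)$ for some $y$. If $y$ receives its value during Procedure 1, this can only happen via Case (ii) at some Stage $1.m$, so $\varphi \in P_{T, m}$ and thus $\Prov_T(\varphi)$. If instead $y$ receives its value during Procedure 2, then the bell has rung, so by Claim \ref{cl:bell1} we have $\neg \Con_T$, and $\Prov_T(\varphi)$ follows trivially.

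The main work for this equivalence has already been discharged in proving Claim \ref{cl:bell1}; what remains here is an elementary case analysis over the clauses of the definition of $e$, formalized inside $\PA$ using that the sequence $\{\xi_s\}$ is a primitive recursive enumeration of $\LA$-formulas. I do not anticipate any genuine obstacle.
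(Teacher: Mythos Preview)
Your proposal is correct and follows essentially the same approach as the paper: split on whether the bell rings, use Claim~\ref{cl:bell1} to reduce the bell case to $\neg\Con_T$ (whence $\Prov_T$ and $\PR_e$ both hold of all formulas), and in the no-bell case verify directly from the definition that $e$ enumerates exactly the $T$-theorems. The paper's version is simply more terse, stating ``$e$ outputs all $\LA$-formulas in Procedure 2'' without your case-by-case verification of (i'), (ii'), (iii'), and packaging the bell case via the auxiliary equivalence $\PA + \neg\Con_T \vdash \forall x\bigl(\Prov_T(x) \leftrightarrow \Fml(x)\bigr)$ rather than appealing to $\neg\Con_T$ directly.
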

\begin{proof}
By the definition of $e$, it is easily shown that 
\[
    \PA + \neg \exists x \, \Bell_e(x) \vdash \forall x (\Prov_{T}(x) \leftrightarrow \PR_{e}(x)).
\]
Since $e$ outputs all $\LA$-formulas in Procedure 2, we obtain
\[
    \PA + \exists x \, \Bell_e(x) \vdash \forall x \bigl(\Fml(x) \leftrightarrow \PR_{e}(x) \bigr).
\]
Also, we have $\PA + \neg \Con_T \vdash \forall x \bigl(\Prov_{T}(x) \leftrightarrow \Fml(x) \bigr)$. 
By combining these equivalences with Claim \ref{cl:bell1}, we obtain 
\[
    \PA + \exists x \, \Bell_e(x) \vdash \forall x (\Prov_{T}(x) \leftrightarrow \PR_{e}(x)).
\]
By the law of excluded middle, we conclude $\PA \vdash \forall x (\Prov_{T}(x) \leftrightarrow \PR_{e}(x))$. 
\end{proof}

\begin{cl}\label{cl:standard1}
For any $n \in \omega$, $\PA \vdash \forall x (\Bell_e(x) \to x > \num{n})$.
\end{cl}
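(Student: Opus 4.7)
The plan is to fix $n \in \omega$ in the meta-theory and show $\PA \vdash \neg \Bell_e(\num{m})$ for every standard $m \leq n$; the desired universal statement $\forall x(\Bell_e(x) \to x > \num{n})$ is $\PA$-equivalent to the finite conjunction $\bigwedge_{m \leq n}\neg \Bell_e(\num{m})$, so this suffices.

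The key observation is that the formula $\Bell_e(x)$ is $\Delta_1(\PA)$ in $x$. Indeed, the ringing condition at Stage $1.x$ is a disjunction of two bounded conditions: (a) ``$0=1$ is a t.c.~of $P_{T,x}$'', which is a bounded search over $T$-proofs of length at most $x$, and (b) the existence of some $\varphi \in P_{T,x}\setminus P_{T,x-1}$ of the prescribed form $\bigwedge_{i<n'}(\PRR_e(\gn{\psi_i}) \to \psi_i) \to \pi$ together with a witness $r \leq x$ of $\True_{\Sigma_1}(\gn{\neg \pi})$. Here $\PRR_e$ appears only as an inscription inside a G\"odel number, not as an evaluated predicate, so it does not inflate the logical complexity, and the G\"odel numbers of the candidate $\varphi, \psi_i, \pi$ are automatically bounded in terms of $x$ because $\varphi$ has a $T$-proof of length $\leq x$.

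Next, I would argue externally that $\Bell_e(\num{m})$ is false in $\mathbb{N}$ for every standard $m$. This is precisely the content already worked out in the forward direction of Claim~\ref{cl:bell1}: if the bell rang at stage $1.m$, the case analysis there produces both $T \vdash \pi$ and $T \vdash \neg \pi$, contradicting our standing meta-assumption that $T$ is consistent. Since $\Bell_e(\num{m})$ is a $\Delta_1(\PA)$ sentence that is false in $\mathbb{N}$, its negation is $\PA$-equivalent to a true $\Sigma_1$ sentence and hence provable in $\PA$ by $\Sigma_1$-completeness.

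The main obstacle is confirming the $\Delta_1(\PA)$ character of $\Bell_e(x)$: the formula's dependence on the self-referentially defined function $e$ (and hence on $\PRR_e$) must be controlled, but since $e$ is $\Delta_1(\PA)$ by the formalized recursion theorem and the bell condition only refers to finite initial segments of $e$ bounded by $x$, this dependence stays inside the $\Delta_1$ fragment. With that in hand, assembling the finitely many provabilities $\PA \vdash \neg \Bell_e(\num{m})$ for $m \leq n$ yields the claim.
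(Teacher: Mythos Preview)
Your proposal is correct and follows essentially the same approach as the paper: both argue that the bell never rings in $\mathbb{N}$ (you via the case analysis of Claim~\ref{cl:bell1}'s proof, the paper by directly invoking Claim~\ref{cl:bell1}'s statement together with the consistency of $T$), then use the $\Delta_1(\PA)$ character of $\Bell_e$ and $\Sigma_1$-completeness to get $\PA \vdash \neg \Bell_e(\num{m})$ for each standard $m$, from which the bounded universal follows. The only cosmetic difference is that the paper cites Claim~\ref{cl:bell1} as a black box and leaves the $\Delta_1(\PA)$ status of $\Bell_e$ implicit, whereas you spell these points out.
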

\begin{proof}
By Claim \ref{cl:bell1} and the consistency of $T$, we have $\N \models \forall x\, \neg \Bell_e(x)$. 
Thus, $\PA \vdash \neg \Bell_e(\num{n})$ for any $n \in \omega$. 
We then obtain $\PA \vdash \forall x (\Bell_e(x) \to x > \num{n})$.
\end{proof}

The following claim is a key property of the function $e$. 
By Theorem \ref{GenCons}, Clause 1 of the theorem holds for $\PRR_{e}(x)$. 

\begin{cl}\label{cl:main1}
Let $\psi$ be any $\LA$-formula. 
Then, $\PA$ proves the following statement:

``If the bell rings at Stage $1.m$, then
\begin{enumerate}
    \item $P_{T,m-1}$ is propositionally satisfiable, 
    \item if $\varphi$ is a t.c.~of $P_{T,m-1}$, then $\PRR_{e}(\gn{\varphi})$ holds,  
    \item if $\psi$ is not a t.c.~of $P_{T,m-1}$, then $\neg \PRR_{e}(\gn{\psi})$ holds.''
\end{enumerate}
\end{cl}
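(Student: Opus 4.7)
The plan is to argue inside $\PA$, under the standing hypothesis that the bell rings at Stage $1.m$. For Part 1, note that the very first instruction of Stage $1.(m-1)$ rings the bell if $0=1$ is a tautological consequence of $P_{T,m-1}$; since the bell first rings at Stage $1.m$, this cannot occur, so $P_{T,m-1}$ is propositionally satisfiable.

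For Part 2, let $\varphi$ be a t.c.~of $P_{T,m-1}$ and let $s$ be the index with $\xi_s = \varphi$. Case (i') applies at Stage $2.s$, so $e(k_m+t_s) = \varphi$; it remains to show that $\neg\varphi$ does not appear at any position $\leq k_m+t_s$. By Part 1, $\neg\varphi$ is not a t.c.~of $P_{T,m-1}$, hence $\neg\varphi \notin P_{T,m-1}$ and $\neg\varphi$ is not output in Procedure 1. For each earlier stage $2.s_0$ of Procedure 2, a case analysis on which of (i'), (ii'), (iii') applies rules out $\neg\varphi$: for (i') and (ii'), outputting $\neg\varphi$ would require $\neg\varphi$ to be a t.c.~of $P_{T,m-1}$; for (iii'), writing $\varphi = \overbrace{\neg\ldots\neg}^{c}\chi$ with $\chi$ non-negated and matching $\neg\varphi$ against the output $\overbrace{\neg\ldots\neg}^{m+1-a}\xi_{s_0}$ forces $\xi_{s_0}$ to have the form $\overbrace{\neg\ldots\neg}^{j'}\chi$, and a parity computation then forces exactly one of $\xi_{s_0}$ and $\neg\xi_{s_0}$ to be a t.c.~of $P_{T,m-1}$, contradicting the hypothesis of (iii').

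For Part 3, write $\psi = \overbrace{\neg\ldots\neg}^{c_\psi}\chi$ with $\chi$ non-negated, and split on whether $\neg\psi$ is a t.c.~of $P_{T,m-1}$. In the affirmative case, case (ii') applies at Stage $2.s_\psi$, producing $\neg\psi$ at $k_m+t_{s_\psi}$ immediately followed by $\psi$ at $k_m+t_{s_\psi}+1$; a case analysis analogous to Part 2 shows that $\psi$ is not output at any earlier position. In the negative case, neither $\chi$ nor $\neg\chi$ is a t.c., so case (iii') applies at each stage $2.s'$ with $\xi_{s'} = \overbrace{\neg\ldots\neg}^{k}\chi$ for $0 \leq k \leq c_\psi$; within each such stage $\neg\psi$ is produced at offset $m-c_\psi+k$ and $\psi$ at offset $m+1-c_\psi+k$, while at all other stages of Procedure 2 neither $\psi$ nor $\neg\psi$ is output, so every occurrence of $\psi$ is immediately preceded by $\neg\psi$.

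The crux of the argument is the validity of the offset $m-c_\psi+k$ in the negative case of Part 3, which requires $m \geq c_\psi$. Since $\psi$ is a standard $\LA$-formula, $c_\psi$ is a standard natural number, and Claim~\ref{cl:standard1} yields $\PA \vdash \forall x(\Bell_e(x) \to x > c_\psi)$; combining this with our working hypothesis gives $m > c_\psi$, which suffices. Together with the case analyses above, this establishes all three conclusions of the claim.
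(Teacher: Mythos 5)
Your proof follows the paper's argument essentially step for step: the same satisfiability observation for Part 1, the same stage-by-stage exclusion of $\neg \varphi$ for Part 2, the same split on whether $\neg \psi$ is a t.c.\ of $P_{T,m-1}$ for Part 3, and the same appeal to Claim \ref{cl:standard1} (standardness of $\psi$, hence $m > c_\psi$) at exactly the point the paper identifies as the only delicate one. One small repair is needed in Part 2: you assert that outputting $\neg\varphi$ via (ii$'$) would require $\neg\varphi$ to be a t.c.\ of $P_{T,m-1}$, but (ii$'$) at Stage $2.s_0$ also outputs $\xi_{s_0}$ itself, which by the hypothesis of (ii$'$) is \emph{not} a t.c.; so the subcase $\xi_{s_0} \equiv \neg\varphi$ is not excluded by your stated reason but by the fact that $\{\xi_s\}$ is enumerated in ascending order of G\"odel numbers, whence the index of $\neg\varphi$ exceeds $s$ --- the same observation the paper relies on. With that patch the argument is complete and coincides with the paper's.
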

\begin{proof}
We proceed in $\PA$. 
Assume that the bell rings at Stage $1. m$. 

1. Suppose, toward a contradiction, that $P_{T,m-1}$ is not a propositionally satisfiable. Then, $0=1$ is a t.c.~of $P_{T,m-1}$ and the bell rings at Stage $1. (m-1)$. 
This is a contradiction. 

2. Suppose that $\varphi$ is a t.c.~of $P_{T,m-1}$. 
We find $s$ such that $\xi_s \equiv \varphi$ in the sequence $\{ \xi_s \}$. 
Then, we have $e(k_{m} + t_{s} ) = \xi_{s}$ by (i'). 
We would like to show that $e$ does not output $\neg \varphi$ before Stage $2.s$. 
Since $P_{T,m-1}$ is propositionally satisfiable by (1), we have $\neg \varphi \notin P_{T, m-1}$. 
Thus, $e$ does not output $\neg \varphi$ before Stage $1.m$. 
Since $\neg \varphi$ is neither $\xi_{s_0}$ nor $\neg \xi_{s_0}$ for all $s_0 < s$, we have that $e$ does not output $\neg \varphi$ by (i') and (ii') before Stage $2.s$. 
Since $\varphi$ is a t.c.~of $P_{T, m-1}$, $e$ also does not output $\neg \varphi$ by (iii'). 
Therefore, $\PRR_{e}(\gn{\varphi})$ holds.

3. Suppose that $\psi$ is not a t.c.~of $P_{T,m-1}$. 
As in the proof of subclaim in Claim \ref{cl:bell1}, we can show that $\PR_{e}(\gn{\neg\psi}) \preccurlyeq \PR_{e}(\gn{\psi})$ holds. 
The only difference between the proofs is the part to show $c+1 \leq m + 1$ in Case 2. 
In current case, it follows from the standardness of $\psi$. 
More precisely, in the case that for some $s$ and $c$, $\xi_s$ is not a negated formula and $\psi \equiv \overbrace{\neg \ldots \neg}^{c} \xi_{s}$, since $c$ is a standard number, we obtain $c+1 \leq m + 1$ by Claim \ref{cl:standard1}. 
Therefore, we conclude that $\neg \PRR_{e}(\gn{\psi})$ holds.
\end{proof}

We show that $\PRR_{e}(x)$ satisfies the condition $\D{2}$. 

\begin{cl}\label{cl:D2}
For any $\LA$-formulas $\varphi$ and $\psi$, 
\[
    \PA \vdash \PRR_{e}(\gn{\varphi \to \psi}) \to \bigl( \PRR_{e}(\gn{\varphi}) \to \PRR_{e}(\gn{\psi}) \bigr).
\]
\end{cl}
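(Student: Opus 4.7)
The plan is to argue in $\PA$ by cases on whether $\exists x\, \Bell_e(x)$ holds, fixing $\LA$-formulas $\varphi$ and $\psi$ and assuming $\PRR_e(\gn{\varphi \to \psi})$ and $\PRR_e(\gn{\varphi})$, with the goal of deriving $\PRR_e(\gn{\psi})$.

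In the case $\neg \exists x\, \Bell_e(x)$, I will invoke Claim \ref{cl:bell1} to obtain $\Con_T$ and Claim \ref{cl:equiv1} to replace $\PR_e$ throughout by $\Prov_T$. The two hypotheses immediately yield $\Prov_T(\gn{\varphi \to \psi})$ and $\Prov_T(\gn{\varphi})$, and a single application of $\D{2}$ for $\Prov_T$ delivers $\Prov_T(\gn{\psi})$, hence $\PR_e(\gn{\psi})$. To upgrade this to $\PRR_e(\gn{\psi})$, I will observe that if $\PR_e(\gn{\neg \psi})$ also held, then $\Prov_T(\gn{\neg \psi})$ together with $\Prov_T(\gn{\psi})$ and $\D{2}$ would force $\Prov_T(\gn{0 = 1})$, contradicting $\Con_T$. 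Consequently $\neg \PR_e(\gn{\neg \psi})$, and trivially $\PR_e(\gn{\psi}) \prec \PR_e(\gn{\neg \psi})$, which is $\PRR_e(\gn{\psi})$. In the complementary case, when $\Bell_e(m)$ holds for some $m$, parts 2 and 3 of Claim \ref{cl:main1} jointly give, for every $\LA$-formula $\alpha$, the equivalence between $\PRR_e(\gn{\alpha})$ and the statement ``$\alpha$ is a t.c.~of $P_{T, m-1}$''. Because the translation $I$ commutes with $\to$, tautological consequence is closed under modus ponens, so from $\varphi \to \psi$ and $\varphi$ both being t.c.~of $P_{T, m-1}$ I conclude that $\psi$ is, whence $\PRR_e(\gn{\psi})$ as required.

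The delicate point is the bell-never-rings case, since verifying $\PRR_e(\gn{\psi})$ demands not merely a $\PR_e$-proof of $\psi$ but also the absence of any earlier $\PR_e$-proof of $\neg \psi$, and it is precisely the witness-comparison asymmetry built into Rosser predicates that typically obstructs $\D{2}$. The present construction circumvents this obstruction because the assumption $\neg \exists x \, \Bell_e(x)$ collapses $\PR_e$ to the canonical predicate $\Prov_T$ by Claim \ref{cl:equiv1}, and $\Con_T$ supplied by Claim \ref{cl:bell1} rules out a $\PR_e$-proof of $\neg \psi$ outright, making the witness comparison vacuous.
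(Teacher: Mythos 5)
Your proposal is correct and follows essentially the same route as the paper's proof: a case split on $\exists x\,\Bell_e(x)$ via Claim \ref{cl:bell1}, with the no-bell case reduced to $\D{2}$ for $\Prov_T$ through Claim \ref{cl:equiv1} plus the observation that $\Con_T$ makes the witness comparison vacuous, and the bell case handled by Claim \ref{cl:main1} together with closure of tautological consequence under modus ponens. The only (immaterial) difference is that the paper packages the first case as the general equivalence $\PA + \Con_T \vdash \Prov_T(\gn{\chi}) \leftrightarrow \PRR_e(\gn{\chi})$ before invoking $\D{2}$, whereas you unwind the same reasoning inline for the specific formulas.
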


\begin{proof}
Note that $\PA + \Con_{T} \vdash \Prov_{T}(\gn{\varphi}) \to \neg \Prov_{T}(\gn{\neg \varphi})$. 
By combining this with Claim \ref{cl:equiv1}, $\PA + \Con_{T} \vdash \PR_{e}(\gn{\varphi}) \to \neg \PR_{e}(\gn{\neg \varphi})$. 
It follows that $\PA + \Con_{T} \vdash \PR_{e}(\gn{\varphi}) \leftrightarrow \PRR_{e}(\gn{\varphi})$ and hence $\PA + \Con_{T} \vdash \Prov_T(\gn{\varphi}) \leftrightarrow \PRR_{e}(\gn{\varphi})$. 
Since $\D{2}$ holds for $\Prov_T(x)$, we obtain
\[
    \PA + \Con_{T} \vdash \PRR_{e}(\gn{\varphi \to \psi}) \to \bigl( \PRR_{e}(\gn{\varphi}) \to \PRR_{e}(\gn{\psi}) \bigr).
\]
Then, by Claim \ref{cl:bell1}, it suffices to show 
\[
    \PA + \exists x\, \Bell_e(x) \vdash \PRR_{e}(\gn{\varphi \to \psi}) \to \bigl( \PRR_{e}(\gn{\varphi}) \to \PRR_{e}(\gn{\psi}) \bigr).
\]
We argue in $\PA + \exists x\, \Bell_e(x)$. 
Assume that the bell rings at Stage $1.m$. 
Suppose $\PRR_{e}(\gn{\varphi \to \psi})$ and $\PRR_{e}(\gn{\varphi})$ hold. 
Since $\varphi$ and $\psi$ are standard formulas, by Claim \ref{cl:main1}, both $\varphi \to \psi$ and $\varphi$ are t.c.'s of $P_{T,m-1}$. 
Then, $\psi$ is also a t.c.~of $P_{T,m-1}$. By Claim \ref{cl:main1} again, we conclude that $\PRR_{e}(\gn{\psi})$ holds.
\end{proof}

Clause 2 of the theorem immediately follows from the following claim. 

\begin{cl}\label{cl:Pi1}
For any $\Pi_{1}$ sentence $\pi$, if $T + \Rfn(\PRR_{e}) \vdash \pi$, then $\PA \vdash \neg \pi \to \neg \Con_{T}$.
\end{cl}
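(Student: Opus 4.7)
The plan is to show that if $\neg \pi$ holds then the bell of $e$ is forced to ring at some stage, whence $\neg \Con_T$ follows by Claim \ref{cl:bell1}. From the hypothesis $T + \Rfn(\PRR_e) \vdash \pi$ I first extract distinct formulas $\psi_0, \ldots, \psi_{n-1}$ with $T \vdash \varphi$, where
\[
    \varphi \;\equiv\; \bigwedge_{i<n}\bigl(\PRR_e(\gn{\psi_i}) \to \psi_i\bigr) \to \pi.
\]
I then reason inside $\PA$ under the assumption $\neg \pi$, aiming at $\neg \Con_T$.

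Since $\neg \pi$ is $\Sigma_1$, there is a witness $r$ such that $\delta(\gn{\neg \pi}, r)$ holds. The key trick is to pad the antecedent of $\varphi$ so as to obtain a new formula whose smallest $T$-proof is forced to exceed $r$: I would define, primitive recursively in $r$, a sentence $\chi$ (for instance an iterated conjunction of $0 = 0$ of sufficient length) whose Gödel number strictly exceeds $\max\{r, \gn{\psi_0}, \ldots, \gn{\psi_{n-1}}\}$, and then set
\[
    \varphi' \;\equiv\; \bigwedge_{i<n}\bigl(\PRR_e(\gn{\psi_i}) \to \psi_i\bigr) \wedge \bigl(\PRR_e(\gn{\chi}) \to \chi\bigr) \to \pi.
\]
Since $\varphi \to \varphi'$ is a propositional tautology and $\Prov_T(\gn{\varphi})$ is a $\Sigma_1$ truth holding in $\PA$, the condition $\D{2}$ gives $\Prov_T(\gn{\varphi'})$. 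As every $T$-proof has Gödel number at least that of its conclusion, every $T$-proof of $\varphi'$ has Gödel number $\geq \gn{\varphi'} > r$.

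To conclude, suppose toward contradiction that $\Con_T$ holds. By Claim \ref{cl:bell1}, $\forall x\,\neg \Bell_e(x)$, so the construction stays in Procedure 1 at every stage. Let $m$ be the least $T$-proof of $\varphi'$; then $m > r$ and $\varphi' \in P_{T,m} \setminus P_{T,m-1}$. The formula $\varphi'$ matches the syntactic template of case (i) with parameters $n+1$, the distinct antecedent formulas $\psi_0, \ldots, \psi_{n-1}, \chi$ (distinctness ensured by the Gödel-number bound on $\chi$), and $\Pi_1$ conclusion $\pi$; moreover $r \leq m$ witnesses $\True_{\Sigma_1}(\gn{\neg \pi})$. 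Hence case (i) fires and the bell rings at stage $1.m$, contradicting $\forall x\,\neg \Bell_e(x)$.

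The main obstacle is the uniform formalization inside $\PA$: one must check that the construction of $\chi$ and $\varphi'$ from the possibly non-standard parameter $r$ is primitive recursive, that $\PA$ proves $\Prov_T(\gn{\varphi'})$ uniformly in $r$ via a formalized propositional tautology together with $\D{2}$, and that the Gödel-number bound $\gn{\varphi'} > r$ holds uniformly. These points are standard once the Gödel encoding is fixed, but demand some care in bookkeeping.
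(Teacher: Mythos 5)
Your proof is correct and follows the same overall strategy as the paper's: extract a single $T$-provable implication $\bigwedge_{i<n}\bigl(\PRR_e(\gn{\psi_i}) \to \psi_i\bigr) \to \pi$, reason inside $\PA$ under $\neg\pi$ with least witness $r$, force the bell of $e$ to ring via case (i) of Procedure 1, and conclude $\neg\Con_T$ from Claim \ref{cl:bell1}. The one genuine difference is your padding step, and it is a substantive one. The paper simply takes ``the least $T$-proof $m$ of the implication with $m \geq r$'' and asserts the bell rings by Stage $1.(m+1)$; but case (i) only inspects a formula at the unique stage at which it enters $P_{T,\cdot}$, namely at its overall least proof, and for the fixed standard implication that least proof is standard and may lie below a nonstandard $r$, in which case the witness test $r \leq m$ fails there and the formula is discharged by case (ii) once and for all. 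Your device of appending a dummy reflection instance $\PRR_e(\gn{\chi}) \to \chi$ with $\gn{\chi} > r$ guarantees that the padded implication $\varphi'$ first appears at a stage $\geq r$, so case (i) genuinely fires. This makes your argument more careful than the paper's terse one at exactly the delicate point. The formalization issues you flag are real but routine, and they rest on conventions the paper already uses elsewhere: closure of the $T$-theorems under tautological consequence provably in $\PA$ is assumed explicitly in the setup before Theorem \ref{RosThm1}, and the monotonicity of proof codes with respect to the code of the conclusion is invoked in the Subclaim inside the proof of Claim \ref{cl:bell1}.
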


\begin{proof}
Suppose that $T + \Rfn(\PRR_{e})$ proves $\pi$. 
Then, for some $n > 0$ and some distinct formulas $\varphi_{0}, \ldots , \varphi_{n-1}$, we have $T \vdash \bigwedge_{i<n} \bigl( \PRR_{e}(\gn{\varphi_{i}}) \to \varphi_{i} \bigr) \to \pi$. 
We work in $\PA$. 
Assume that $\neg \pi$ is true. 
Then, there exists the least witness $r$ of $\True_{\Sigma_1}(\gn{\neg \pi})$. 
Let $m$ be the least $T$-proof of $\bigwedge_{i<n} \bigl( \PRR_{e}(\gn{\varphi_{i}}) \to \varphi_{i} \bigr) \to \pi$ with $m \geq r$. 
Then, the bell must ring before Stage $1.(m+1)$. 
By Claim \ref{cl:bell1}, $T$ is inconsistent.
\end{proof}

We show that Clause 3 of the theorem holds for $\PRR_{e}(x)$. 

\begin{cl}
$T + \Rfn(\PRR_{e}) \nvdash \Con_T$.
\end{cl}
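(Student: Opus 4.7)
The plan is to assume for contradiction that $T + \Rfn(\PRR_{e}) \vdash \Con_T$, reduce $\PRR_{e}$ to $\Prov_T$ in this extension, and then apply Claim \ref{cl:Pi1} and L\"ob's theorem to derive a contradiction. The key initial observation is that under $\Con_T$ the Rosser comparison trivializes: combining Claim \ref{cl:equiv1} (which gives $\PA \vdash \PR_{e}(x) \leftrightarrow \Prov_T(x)$) with the fact that $\Con_T$ precludes $\PR_{e}(x) \wedge \PR_{e}(\dot\neg x)$ for any sentence code $x$, one obtains $T + \Con_T \vdash \forall x\, (\PRR_{e}(x) \leftrightarrow \Prov_T(x))$. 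Therefore every instance of $\Rfn(\PRR_{e})$ is $T + \Con_T$-equivalent to the corresponding instance of $\Rfn(T)$; combined with the standing assumption this yields $T + \Rfn(\PRR_{e}) \vdash \Rfn(T)$.

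A standard computation next shows $T + \Rfn_{\Sigma_1}(T) \vdash \Con_T^2$, where $\Con_T^2$ abbreviates $\neg \Prov_T(\gn{\neg \Con_T})$: the instance $\Prov_T(\gn{0=1}) \to 0=1$ gives $\Con_T$, and the $\Sigma_1$-instance $\Prov_T(\gn{\neg \Con_T}) \to \neg \Con_T$ then forces $\neg \Prov_T(\gn{\neg \Con_T})$. Thus $T + \Rfn(\PRR_{e}) \vdash \Con_T^2$, and since $\Con_T^2$ is $\Pi_1$, Claim \ref{cl:Pi1} delivers $\PA \vdash \neg \Con_T^2 \to \neg \Con_T$, i.e., $\PA \vdash \Prov_T(\gn{\neg \Con_T}) \to \neg \Con_T$. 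By L\"ob's theorem applied to $\neg \Con_T$, we conclude $T \vdash \neg \Con_T$.

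This means $T + \Rfn(\PRR_{e})$ proves both $\Con_T$ and $\neg \Con_T$ and is therefore inconsistent, so in particular it proves the false $\Pi_1$ sentence $0=1$; a second application of Claim \ref{cl:Pi1} then yields $\PA \vdash \neg (0=1) \to \neg \Con_T$, hence $\PA \vdash \neg \Con_T$, contradicting the soundness of $\PA$ and the consistency of $T$. The main obstacle is the first reduction---pinpointing that $\PRR_{e}$ collapses to $\Prov_T$ under $\Con_T$; once this is in hand, the remainder is a routine Goryachev--L\"ob chain through Claim \ref{cl:Pi1}, with no further use of the combinatorial details of Procedure 1 or 2.
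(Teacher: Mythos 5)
Your proof is correct and follows essentially the same route as the paper's: reduce $\Rfn(\PRR_e)$ to $\Rfn(T)$ under the assumption $\Con_T$ (the paper cites Goryachev's theorem plus Claim \ref{cl:equiv1}, whereas you unfold that argument via the collapse of the Rosser comparison under $\Con_T$), derive $\Con_T^2$, apply Claim \ref{cl:Pi1} and L\"ob's theorem to get $T \vdash \neg\Con_T$, and then apply Claim \ref{cl:Pi1} once more to the $\Pi_1$ sentence $0=1$ to contradict the consistency of $T$.
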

\begin{proof}
Suppose, toward a contradiction, that $T + \Rfn(\PRR_{e}) \vdash \Con_T$. 
Then, $T + \Rfn(\PRR_{e}) \vdash \Rfn(\PR_{T})$ holds by Goryachev's theorem and Claim \ref{cl:equiv1}. 
Since $T + \Rfn(\PR_{T}) \vdash \Con_T^2$, we obtain $T + \Rfn(\PRR_{e}) \vdash \Con_T^2$. 
Since $\Con_T^2$ is a $\Pi_1$ sentence, $\PA \vdash \neg \Con_T^2 \to \neg \Con_T$ by Claim \ref{cl:Pi1}. 
We then obtain $T \vdash \neg \Con_T$ by L\"{o}b's theorem. 
From the supposition, $T + \Rfn(\PRR_{e})$ is inconsistent, and hence $T + \Rfn(\PRR_{e}) \vdash 0 = 1$. 
Since $0=1$ is a $\Pi_1$ sentence, we obtain $\PA \vdash \neg 0=1 \to \neg \Con_T$ by Claim \ref{cl:Pi1} again. 
Then, $\N \models \neg \Con_T$, a contradiction. 
\end{proof}

We finally show that the last clause of the theorem holds for $\PRR_{e}(x)$. 

\begin{cl}
For any $\LA$-formula $\varphi$, $\PA \vdash \neg \bigl(\PRR_{e}(\gn{\varphi}) \land \PRR_{e}(\gn{\neg \varphi}) \bigr)$. 
\end{cl}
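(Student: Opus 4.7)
My plan is to derive a contradiction from the hypothesis $\PRR_e(\gn{\varphi}) \land \PRR_e(\gn{\neg \varphi})$, working entirely in $\PA$ and synthesizing the previously established claims. Crucially, no case split on whether the bell rings is needed: the hypothesis itself will force the bell to ring, and then Claim \ref{cl:main1} will deliver the contradiction.

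First, I would unfold $\PRR_e(x)$ as the witness-comparison formula $\PR_e(x) \prec \PR_e(\dot{\neg} x)$; the hypothesis then immediately yields $\PR_e(\gn{\varphi}) \land \PR_e(\gn{\neg\varphi})$. Applying Claim \ref{cl:equiv1}, this gives $\Prov_T(\gn{\varphi}) \land \Prov_T(\gn{\neg\varphi})$, hence $\neg \Con_T$. By Claim \ref{cl:bell1}, I may then fix $m$ satisfying $\Bell_e(m)$.

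Next, I would apply the contrapositive of Claim \ref{cl:main1}(3) to the standard formulas $\varphi$ and $\neg \varphi$: since both $\PRR_e(\gn{\varphi})$ and $\PRR_e(\gn{\neg\varphi})$ hold, both $\varphi$ and $\neg \varphi$ must be tautological consequences of $P_{T, m-1}$. But then $0 = 1$ is also a t.c.~of $P_{T, m-1}$, so $P_{T, m-1}$ is propositionally unsatisfiable, contradicting Claim \ref{cl:main1}(1). There is essentially no main obstacle here: the heavy lifting was done in the preparatory claims, and the only point to verify is that Claim \ref{cl:main1} applies uniformly to both $\varphi$ and $\neg\varphi$, which is immediate since the claim is stated for arbitrary metatheoretic $\LA$-formulas.
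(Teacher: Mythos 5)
Your argument is correct, but it takes a genuinely different route from the paper. The paper's proof is a two-line modular derivation from material already established: since $T \vdash \varphi \to (\neg\varphi \to 0=1)$, two applications of the $\D{2}$ property of $\PRR_e$ (Claim \ref{cl:D2}) give $\PA \vdash \PRR_e(\gn{\varphi}) \to (\PRR_e(\gn{\neg\varphi}) \to \PRR_e(\gn{0=1}))$, and then the standard fact that Rosser predicates provably satisfy $\neg\PRR_e(\gn{0=1})$ finishes it; this is exactly the observation, made earlier in the section, that $\D{2}$ implies \textsf{C2} for any Rosser predicate. You instead bypass Claim \ref{cl:D2} and unwind the construction directly: the hypothesis forces $\neg\Con_T$ via Claims \ref{cl:equiv1} and the $\D{2}$ property of the \emph{canonical} $\Prov_T$, hence the bell rings by Claim \ref{cl:bell1}, and then the contrapositive of Claim \ref{cl:main1}(3) applied to the two standard formulas $\varphi$ and $\neg\varphi$ makes both tautological consequences of $P_{T,m-1}$, contradicting its propositional satisfiability from Claim \ref{cl:main1}(1). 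Each step checks out (in particular, your observation that no case split on $\Con_T$ is needed because the hypothesis itself yields $\neg\Con_T$ is a nice simplification over the structure of the proof of Claim \ref{cl:D2}). What the paper's route buys is brevity and generality --- it isolates the reusable implication ``$\D{2}$ implies \textsf{C2}'' --- while your route buys independence from Claim \ref{cl:D2} and from the (slightly glossed-over) fact that $\PRR_e$ provably applies to standard tautologies such as $\varphi \to (\neg\varphi \to 0=1)$.
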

\begin{proof}
Since $\PA \vdash \varphi \to (\neg \varphi \to 0 = 1)$, we have 
$\PA \vdash \PRR_{e}(\gn{\varphi}) \to \bigl(\PRR_{e}(\gn{\neg \varphi}) \to \PRR_{e}(\gn{0=1}) \bigr)$ by Claim \ref{cl:D2}. 
Since $\PA \vdash \neg \PRR_{e}(\gn{0=1})$, we obtain $\PA \vdash \neg \bigl(\PRR_{e}(\gn{\varphi}) \land \PRR_{e}(\gn{\neg \varphi}) \bigr)$. 
\end{proof}
This completes the proof of Theorem \ref{RosThm1}.
\end{proof}

\section{Rosser provability predicates for which the conservation theorem does not hold}\label{sec:Ros}

In this section, we study Rosser provability predicates for which the conservation theorem does not hold. 
That is, we provide a counterexample to Problem \ref{prob:Kur}. 
In fact, we provide Rosser provability predicates having properties stronger than those required as counterexamples, namely, $T+ \Rfn_{\Gamma^{d}}(\PRR_{T})$ is not $\Pi_1$-conservative over $T+ \Rfn_{\Gamma} (\PRR_{T})$. 
We show this in two ways. 
In the first subsection, we prove that for each $\Gamma \in \{ \Sigma_{n}, \Pi_{n} \mid n \geq 1\}$, there exists a Rosser provability predicate $\PRR_{T}(x)$ such that $\Con_T$ witnesses the failure of the $\Pi_1$-conservativity of $T+ \Rfn_{\Gamma^{d}}(\PRR_{T})$ over $T+ \Rfn_{\Gamma} (\PRR_{T})$. 
In the second subsection, we prove the existence of a Rosser provability predicate $\PRR_{T}(x)$ for which the $\Pi_1$-conservation theorem does not hold uniformly, that is, $T+ \Rfn_{\Gamma^{d}}(\PRR_{T})$ is not $\Pi_1$-conservative over $T+ \Rfn_{\Gamma} (\PRR_{T})$ for all $\Gamma \in \{ \Sigma_{n}, \Pi_{n} \mid n \geq 1\}$.

\subsection{Rosser provability for which $\Con_T$ witnesses the lack of $\Pi_1$-conservativity}

If $T + \Rfn(\PRR_T)$ proves $\Con_T$, then as shown in Proposition \ref{prop:Cons}, for $\Gamma \in \{\Sigma_{n+1}, \Pi_{n+1} \mid n \geq 1\}$, the $\Pi_1$-conservativity of $\Rfn(\PRR_T)$ over $T + \Rfn_{\Gamma}(\PRR_T)$ is equivalent to the provability of $\Con_T$ over $T + \Rfn_{\Gamma}(\PRR_T)$. 
We show that we are free to control the smallest level of Rosser-type reflection principle that proves $\Con_T$.
We fix an effective sequence $\{\alpha_{\Gamma}\}_{\Gamma \in \{\Sigma_n, \Pi_n \mid n \geq 1\}}$ such that each $\alpha_\Gamma$ is a $\Gamma$ sentence provable in predicate logic which is not a $\Gamma^d$ sentence. 
This sequence will also be used in Subsections \ref{uniform} and \ref{Sigma_1}. 

\setcounter{cl}{0}

\begin{thm}\label{RosThm3}
For each $\Gamma \in \{ \Sigma_{n}, \Pi_{n} \mid n \geq 1\}$, there exists a Rosser provability predicate $\PRR_{T}(x)$ of $T$ such that $T + \Rfn_{\Gamma}(\PRR_{T})\vdash \Con_{T}$ and $T + \Rfn_{\Gamma^d}(\PRR_{T}) \nvdash \Con_{T}$.
\end{thm}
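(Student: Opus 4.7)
The plan is to adapt the bell-diagonalization from the proof of Theorem \ref{RosThm1} to break the symmetry between $\Gamma$ and $\Gamma^{d}$. Fix $\Gamma$. Via the formalized recursion theorem, I would define a $\Delta_{1}(\PA)$-function $e$ together with $\PR_{e}(x) :\equiv \exists y(x = e(y))$ and $\PRR_{e}(x) :\equiv \PR_{e}(x) \prec \PR_{e}(\dot{\neg} x)$. Procedure 1 enumerates $T$-theorems as in Theorem \ref{RosThm1}, and the bell rings simply when $0=1$ becomes a t.c.~of $P_{T,m}$. The key modification is in case (iii') of Procedure 2: if $\xi_{s}$ is a $\Gamma$-sentence but not a $\Gamma^{d}$-sentence, then $e$ outputs $\xi_{s}$ before $\neg \xi_{s}$ (with appropriate padding); otherwise, $\neg \xi_{s}$ is output first, as in the original. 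This case distinction is primitive recursive, so the analogues of Claims \ref{cl:bell1}--\ref{cl:main1} go through essentially unchanged: the bell's ringing is $\PA$-provably equivalent to $\neg \Con_{T}$, $\PR_{e}(x)$ is a provability predicate of $T$, and $P_{T,m-1}$ is propositionally satisfiable whenever the bell rings at stage $1.m$.

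To establish $T + \Rfn_{\Gamma}(\PRR_{e}) \vdash \Con_{T}$, I would construct a witness $\varphi^{*} \in \Gamma \setminus \Gamma^{d}$ via the Fixed Point Lemma, with its syntactic class raised by padding with $\alpha_{\Gamma}$. For $\Gamma = \Sigma_{n}$ (any $n \geq 1$) or $\Gamma = \Pi_{n}$ with $n \geq 2$, take $\varphi^{*}$ with $T \vdash \varphi^{*} \leftrightarrow \bigl(\alpha_{\Gamma} \land (\PR_{e}(\gn{\neg \varphi^{*}}) \prec \PR_{e}(\gn{\varphi^{*}}))\bigr)$; the witness-comparison clause lies in $\Sigma_{1} \subseteq \Gamma$, so the right-hand side is syntactically in $\Gamma$, while the $\alpha_{\Gamma}$-padding keeps $\varphi^{*}$ out of $\Gamma^{d}$. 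For the remaining case $\Gamma = \Pi_{1}$, take $\varphi^{*}$ with $T \vdash \varphi^{*} \leftrightarrow \bigl(\alpha_{\Pi_{1}} \land \neg \PRR_{e}(\gn{\varphi^{*}})\bigr)$, where the negated Rosser clause is $\Pi_{1}$. In either sub-case, applying Proposition \ref{prop:wc} to the $\prec$-form, or reading the fixed-point equation directly in the $\Pi_{1}$-form, yields $T \vdash \PRR_{e}(\gn{\varphi^{*}}) \to \neg \varphi^{*}$. Combined with the instance $\PRR_{e}(\gn{\varphi^{*}}) \to \varphi^{*}$ of $\Rfn_{\Gamma}(\PRR_{e})$, we get $T + \Rfn_{\Gamma}(\PRR_{e}) \vdash \neg \PRR_{e}(\gn{\varphi^{*}})$. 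On the other hand, in any model $M \models T + \neg \Con_{T}$ the bell rings at some (non-standard) $m$, and by the propositional-satisfiability lemma for $P_{T,m-1}$ together with the self-referential shape of $\varphi^{*}$, neither $\varphi^{*}$ nor $\neg \varphi^{*}$ is a t.c.~of $P_{T,m-1}$ in $M$; so case (iii') applies to $\xi_{s} = \varphi^{*}$, and the modified output order forces $M \models \PRR_{e}(\gn{\varphi^{*}})$, contradicting the derivation above.

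The converse $T + \Rfn_{\Gamma^{d}}(\PRR_{e}) \nvdash \Con_{T}$ is shown semantically: any model $M \models T + \neg \Con_{T}$ (which exists since $T \nvdash \Con_{T}$) already satisfies $\Rfn_{\Gamma^{d}}(\PRR_{e})$. For each $\Gamma^{d}$-sentence $\xi$, either $\xi$ is a t.c.~of $P_{T,m-1}$ in $M$ (whence $M \models \xi$), or $\neg \xi$ is (whence case (ii') gives $M \models \neg \PRR_{e}(\gn{\xi})$), or neither is and case (iii') applies; in the last case, since $\xi \notin \Gamma \setminus \Gamma^{d}$, the modification outputs $\neg \xi$ first, again giving $M \models \neg \PRR_{e}(\gn{\xi})$. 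In every case $M \models \PRR_{e}(\gn{\xi}) \to \xi$.

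The main obstacle I anticipate is the non-standard analysis required to place $\varphi^{*}$ in case (iii') inside $M$: one must rule out scenarios (i') and (ii') by combining the propositional satisfiability of $P_{T,m-1}$ with the self-referential content of $\varphi^{*}$, a delicate but standard argument in the style of Claim \ref{cl:main1}. A secondary issue is the bookkeeping needed to confirm that the $\alpha_{\Gamma}$-padding genuinely places $\varphi^{*}$ in $\Gamma \setminus \Gamma^{d}$ for each $\Gamma \in \{\Sigma_{n}, \Pi_{n} \mid n \geq 1\}$, which hinges on the primitive-recursive inductive definition of these classes and the resistance of their duals to accidental membership.
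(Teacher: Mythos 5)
Your overall architecture diverges from the paper's in a way that creates two genuine gaps. The paper's Procedure~1 does not wait for $0=1$ to become a tautological consequence: it actively watches for $T$-proofs of a short list of critical sentences (the fixed points, their negations, and sentences of the form $\neg\bigwedge_{i<j}(\PR_f(\gn{\neg\varphi_i})\prec\PR_f(\gn{\varphi_i}))$ for $\Gamma^d$ formulas $\varphi_i$) and rings the bell the instant one appears, so that the construction dictates the output order at exactly that moment. Your passive bell forfeits this control, and both halves of your argument break as a result.

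First, for $T+\Rfn_\Gamma(\PRR_e)\vdash\Con_T$: your key assertion that in any $M\models T+\neg\Con_T$ neither $\varphi^*$ nor $\neg\varphi^*$ is a t.c.\ of $P_{T,m-1}$ does not follow from the propositional satisfiability of $P_{T,m-1}$ or from the shape of $\varphi^*$. Since $M\models\neg\Con_T$, the sentence $\neg\varphi^*$ may well acquire a nonstandard $T$-proof below $m$, in which case it enters $P_{T,m-1}$, is output by Procedure~1 (or by case (ii') of Procedure~2) before $\varphi^*$, and you get $M\models\neg\PRR_e(\gn{\varphi^*})$ --- exactly the conclusion your reflection instance already gives, so no contradiction. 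In that branch the failed reflection instance is the one for $\neg\varphi^*$, which lies in $\Gamma^d$, not in $\Gamma$. This is precisely why the paper uses \emph{two} fixed points, $\pi\wedge\alpha_{\Pi_n}$ and the $\Sigma_1$ sentence $\sigma$ with $\sigma\leftrightarrow\PR_f(\gn{\neg(\pi\wedge\alpha_{\Pi_n})})\preccurlyeq\PR_f(\gn{\pi\wedge\alpha_{\Pi_n}})$: whichever side of the race wins, one of the two designated $\Gamma$ sentences is ``provable but false'' in $\PA+\neg\Con_T$. A single fixed point cannot cover both branches. Second, for $T+\Rfn_{\Gamma^d}(\PRR_e)\nvdash\Con_T$: your semantic argument claims that if a $\Gamma^d$ sentence $\xi$ is a t.c.\ of $P_{T,m-1}$ then $M\models\xi$, but being a tautological consequence of $M$'s (possibly nonstandardly proved) theorems does not imply truth in $M$; so you have not shown that any model of $T+\neg\Con_T$ satisfies $\Rfn_{\Gamma^d}(\PRR_e)$, and indeed no such blanket statement can be expected. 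The paper instead argues proof-theoretically: a hypothetical derivation of $\Con_T$ from finitely many $\Gamma^d$ instances is converted, via the $\Pi_1$-conservativity of $\neg\Con_T$ over $T$, into a $T$-proof of $\neg\bigwedge_{i<j}(\PR_f(\gn{\neg\varphi_i})\prec\PR_f(\gn{\varphi_i}))$, which by design rings the bell in $\N$ and contradicts consistency --- an argument unavailable to your construction because your bell is not triggered by such proofs.
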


\begin{proof}
Throughout the proof, we fix $\Gamma \in \{\Sigma_n, \Pi_n \mid n \geq 1\}$. 
For the fixed $\Gamma$, we define a $\Delta_1(\PA)$-definable function $f$ by using the formalized recursion theorem as in the proof of Theorem \ref{RosThm1}. 
The formulas $\PR_{f}(x)$ and $\PRR_{f}(x)$ based on $f$ are also defined as in the proof of Theorem \ref{RosThm1}. 
We can effectively find a $\Pi_1$ sentence $\pi$ and a $\Sigma_1$ sentence $\sigma$ satisfying the following equivalences: 
\begin{itemize}
    \item $\PA \vdash \pi \leftrightarrow \neg \PRR_{f}(\gn{\pi \wedge \alpha_{\Pi_n}})$ and
    \item $\PA \vdash \sigma \leftrightarrow \PR_{f}(\gn{\neg(\pi \wedge \alpha_{\Pi_n})}) \preccurlyeq \PR_{f}(\gn{\pi \wedge \alpha_{\Pi_n}})$.
\end{itemize}
Note that $\PA \vdash \sigma \to \pi$ holds. 
As in the proof of Theorem \ref{RosThm1}, the definition of $f$ consists of Procedures 1 and 2. 
Let $k_0 : = 0$. 

\vspace{0.1in}
\textsc{Procedure 1}: The bell has not yet rung. 

Stage $m$: If $P_{T, m} = P_{T, m-1}$, then let $k_{m+1} := k_m$ and go to Stage $m+1$.

If $\varphi \in P_{T, m} \setminus P_{T, m-1}$, then depending on whether $\Gamma = \Sigma_n$ or $\Gamma = \Pi_n$, we provide each definition of $f$ as follows. 

\paragraph{Case 1:} $\Gamma = \Sigma_n$. \\
We distinguish the following four cases:

\begin{enumerate}
\item[(i):] If $\varphi$ is $\pi \wedge \alpha_{\Pi_n}$ or $\neg \neg (\pi \wedge \alpha_{\Pi_n})$, then define $f(k_{m}) : = \pi \wedge \alpha_{\Pi_n}$ and $f(k_{m}+1) : = \sigma \wedge \alpha_{\Sigma_n}$. 
Ring the bell and go to Procedure 2.

\item[(ii):] If $\varphi$ is $\neg (\pi \wedge \alpha_{\Pi_n})$ or $\neg (\sigma \wedge \alpha_{\Sigma_n})$, define $f(k_{m}) : = \neg (\pi \wedge \alpha_{\Pi_n})$ and $f(k_{m}+1) : = \neg(\sigma \wedge \alpha_{\Sigma_n})$. 
Ring the bell and go to Procedure 2.

\item[(iii):] Else if $\varphi$ is $\neg \bigwedge_{i < j} \bigl( \PR_{f}(\gn{\neg \varphi_i}) \prec \PR_{f}(\gn{\varphi_i}) \bigr)$ for some $j$ and some distinct $\Gamma^d$ formulas $\varphi_{0}, \ldots ,\varphi_{j-1}$ and $f$ does not output $\varphi_{0}, \ldots, \varphi_{j-1}$ before stage $m$, then let $\varphi_{0}', \ldots, \varphi_{j-1}'$ be the rearrangement of $\varphi_{0}, \ldots ,\varphi_{j-1}$ in the descending order of length and define $f(k_{m}) : = \neg (\pi \wedge \alpha_{\Pi_n})$ and $f(k_{m}+1+i) : = \neg \varphi_{i}'$ for every $i < j$. 
Ring the bell and go to Procedure 2.

\item[(iv):] Otherwise, define $f(k_m) : = \varphi$ and $k_{m+1} : = k_m$. 
Go to Stage $m+1$.
\end{enumerate}

\paragraph{Case 2:} $\Gamma = \Pi_n$. \\
We replace (i) and (iii) of Case 1 by the following (i') and (iii'), respectively. 

\begin{enumerate}
\item[(i'):] If $\varphi$ is $\pi \wedge \alpha_{\Pi_n}$ or $\neg \neg (\sigma \wedge \alpha_{\Sigma_n})$, then define $f(k_{m}) : = \pi \wedge \alpha_{\Pi_n}$, $f(k_{m}+1) : = \sigma \wedge \alpha_{\Sigma_n}$ and $f(k_{m}+2) : = \neg \neg (\sigma \wedge \alpha_{\Sigma_n})$. 
Ring the bell and go to Procedure 2.

\item[(iii'):] Else if $\varphi$ is $\neg \bigwedge_{i < j} \bigl( \PR_{f}(\gn{\neg \varphi_i}) \prec \PR_{f}(\gn{\varphi_i}) \bigr)$ for some $j$ and some distinct $\Gamma^d$ formulas $\varphi_{0}, \ldots ,\varphi_{j-1}$ and $f$ does not output $\varphi_{0}, \ldots, \varphi_{j-1}$ before stage $m$, then let $\varphi_{0}', \ldots, \varphi_{j-1}'$ be the rearrangement of $\varphi_{0}, \ldots ,\varphi_{j-1}$ in the descending order of length and define $f(k_{m}) : = \pi \wedge \alpha_{\Pi_n}$ and $f(k_{m}+1+i) : = \neg \varphi_{i}'$ for every $i < j$. 
Ring the bell and go to Procedure 2.
\end{enumerate}

\vspace{0.1in}
\textsc{Procedure 2}: The function $f$ outputs all $\LA$-formulas.

We finish the definition of the function $f$.

Let $\Bell_f(x)$ be an $\LA$-formula saying ``the bell of $f$ rings at Stage $x$''.

\begin{cl}\label{cl:bell3}
$\PA \vdash \exists x\, \Bell_f(x) \leftrightarrow \neg \Con_{T}$.
\end{cl}
\begin{proof}
We discuss inside $\PA$. 
The implication $(\leftarrow)$ is easily followed from (i) or (i'), and so we prove the implication $(\rightarrow)$. 
Suppose the bell rings at Stage $m$. 
We distinguish the following five cases.

\paragraph{Case 1:} $m$ is a $T$-proof of $\pi \wedge \alpha_{\Pi_n}$ or $\neg \neg (\pi \wedge \alpha_{\Pi_n})$. \\
By the definition of $\pi$, we have that $\neg \PRR_{f}(\gn{\pi \wedge \alpha_{\Pi_n}})$ is provable in $T$. 
Since $f(k_m) = \pi \wedge \alpha_{\Pi_n}$ and $f$ does not output $\neg (\pi \wedge \alpha_{\Pi_n})$ before Stage $m$, we obtain that $\PRR_{f}(\gn{\pi \wedge \alpha_{\Pi_n}})$ holds. 
By formalized $\Sigma_1$-completeness, this sentence is provable in $T$. 
Thus, $T$ is inconsistent.

\paragraph{Case 2:} $m$ is a $T$-proof of $\neg (\pi \wedge \alpha_{\Pi_n})$ or $\neg (\sigma \wedge \alpha_{\Sigma_n})$. \\
Since $\alpha_{\Pi_n}$ is provable in predicate logic and $\sigma$ implies $\pi$, we have that $T$ proves $\neg (\sigma \wedge \alpha_{\Sigma_n})$ in both cases. 
Since $f(k_m) = \neg (\pi \wedge \alpha_{\Pi_n})$ and $f$ does not output $\pi \wedge \alpha_{\Pi_n}$ before Stage $m$, we obtain that $\PR_{f}(\gn{\neg(\pi \wedge \alpha_{\Pi_n})}) \preccurlyeq \PR_{f}(\gn{\pi \wedge \alpha_{\Pi_n}})$ holds, that is, $\sigma$ holds. 
Since $\sigma$ is a $\Sigma_1$ sentence and $\alpha_{\Sigma_n}$ is provable in $T$, $\sigma \wedge \alpha_{\Sigma_n}$ is provable in $T$. 
It follows that $T$ is inconsistent. 

\paragraph{Case 3:} $m$ is a $T$-proof of $\neg \bigwedge_{i < j} \bigl( \PR_{f}(\gn{\neg \varphi_i}) \prec \PR_{f}(\gn{\varphi_i}) \bigr)$ for some $j$ and some distinct $\Gamma^d$ formulas $\varphi_{0}, \ldots, \varphi_{j-1}$ and $f$ does not output $\varphi_{0}, \ldots, \varphi_{j-1}$ before Stage $m$. \\
In this case, $T$ proves $\neg \bigwedge_{i < j} \bigl( \PR_{f}(\gn{\neg \varphi_i}) \prec \PR_{f}(\gn{\varphi_i}) \bigr)$. 
Let $\varphi_{0}', \ldots, \varphi_{j-1}'$ be the rearrangement of $\varphi_{0}, \ldots, \varphi_{j-1}$ as above, then $f(k_{m}+1+i)= \neg \varphi_{i}'$ for every $i < j$. 
Note that if $\Gamma = \Sigma_n$, then $f(k_{m}) = \neg (\pi \wedge \alpha_{\Pi_n})$ and if $\Gamma = \Pi_n$, then $f(k_{m}) = \pi \wedge \alpha_{\Pi_n}$. 
Since $\alpha_{\Pi_n}$ is not a $\Sigma_n$ sentence, we have that $f(k_{m})$ is not a $\Gamma^d$ sentence. 
Hence, $f(k_{m})$ is distinct from each of $\Gamma^d$ formulas $\varphi_{0}, \ldots, \varphi_{j-1}$. 
In addition, $\varphi_{i}'$ is different from all of $\neg \varphi_{0}', \ldots, \neg \varphi_{i-1}'$ for $i < j$ because of the order of the rearrangement. 
Therefore, we obtain that $\bigwedge_{i < j} \bigl( \PR_{f}(\gn{\neg \varphi_{i}'}) \prec \PR_{f}(\gn{\varphi_{i}'}) \bigr)$ holds, and this $\Sigma_1$ sentence is provable in $T$. 
We have that $T$ is inconsistent. 

\paragraph{Case 4:} $m$ is a $T$-proof of $\pi \wedge \alpha_{\Pi_n}$ or $\neg \neg (\sigma \wedge \alpha_{\Sigma_n})$. \\
We further distinguish the following two cases. 
\begin{itemize}
\item $m$ is a $T$-proof of $\pi \wedge \alpha_{\Pi_n}:$ 
We obtain $f(k_m) = \pi \wedge \alpha_{\Pi_n}$. 
As in Case 1, $\PRR_{f}(\gn{\pi \wedge \alpha_{\Pi_n}})$ and $\neg \PRR_{f}(\gn{\pi \wedge \alpha_{\Pi_n}})$ are provable in $T$. It concludes that $T$ is inconsistent.
\item $m$ is a $T$-proof of $\neg \neg (\sigma \wedge \alpha_{\Sigma_n}):$ Since $\sigma$ is provable in $T$ and  $\sigma$ implies $\neg \PRR_{f}(\gn{\pi \wedge \alpha_{\Pi_n}})$, $\neg \PRR_{f}(\gn{\pi \wedge \alpha_{\Pi_n}})$ is provable in $T$. As in Case 1, it is shown that $T$ proves $\PRR_{f}(\gn{\pi \wedge \alpha_{\Pi_n}})$. Thus, $T$ is inconsistent. 
\end{itemize}
\end{proof}

As in the proof of Theorem \ref{RosThm1}, we obtain the following claim.
We then have that $x = f(y)$, $\PR_{f}(x)$, and $\PRR_{f}(x)$ are proof predicate, provability predicate, and Rosser provability predicate of $T$, respectively. 

\begin{cl}
$\PA \vdash \forall x \bigl(\Prov_{T}(x) \leftrightarrow \PR_{f}(x) \bigr)$.
\end{cl}

We prove the first statement of the theorem. 

\begin{cl}
$\PA + \Rfn_{\Gamma}(\PRR_{f}) \vdash \Con_{T}$.
\end{cl}
\begin{proof}
We distinguish the following two cases.

\paragraph{Case 1:} $\Gamma = \Sigma_n$. \\
We show that $\PA + \neg \Con_T$ proves
\[
    \bigl( \PRR_{f}(\gn{\neg (\pi\wedge \alpha_{\Pi_n})}) \land (\pi\wedge \alpha_{\Pi_n})\bigr) \lor \bigl( \PRR_{f}(\gn{\sigma \wedge \alpha_{\Sigma_n}}) \land \neg (\sigma \wedge \alpha_{\Sigma_n}) \bigr).
\]
Then, the claim immediately follows since $\neg (\pi\wedge \alpha_{\Pi_n})$ and $\sigma \wedge \alpha_{\Sigma_n}$ are $\Sigma_n$.  

We argue in $\PA + \neg \Con_T$. 
By Claim \ref{cl:bell3}, the bell rings at Stage $m$. 
We distinguish the following two cases.

\paragraph{Case 1.1}: The bell rings because of (i) at Stage $m$. \\
In this case, $f(k_{m}) = \pi \wedge \alpha_{\Pi_n}$ and $f(k_{m}+1) = \sigma \wedge \alpha_{\Sigma_n}$ and $f$ does not output $\neg (\pi \wedge \alpha_{\Pi_n})$ and $\neg(\sigma \wedge \alpha_{\Sigma_n})$ before Stage $m$. 
Since $\pi \wedge \alpha_{\Pi_n}$ and $\neg(\sigma \wedge \alpha_{\Sigma_n})$ are distinct, $\PRR_{f} (\gn{\pi \wedge \alpha_{\Pi_n}})$ and $\PRR_{f} (\gn{\sigma \wedge \alpha_{\Sigma_n}})$ hold. 
Since $\PRR_{f}(\gn{\pi \wedge \alpha_{\Pi_n}})$ implies $\neg \sigma$, we obtain $\neg (\sigma \wedge \alpha_{\Sigma_n})$. Thus, we have that $\PRR_{f}(\gn{\sigma \wedge \alpha_{\Sigma_n}}) \wedge \neg (\sigma \wedge \alpha_{\Sigma_n})$ holds.

\paragraph{Case 1.2}: The bell rings because of (ii) or (iii) at Stage $m$. \\
In this case, $f(k_m) = \neg(\pi \wedge \alpha_{\Pi_n})$ and $f$ does not output $\neg \neg (\pi \wedge \alpha_{\Pi_n})$ and $\pi \wedge \alpha_{\Pi_n}$ before Stage $m$. 
We then have that $\PRR_{f}(\gn{\neg (\pi \wedge \alpha_{\Pi_n})})$ and $\sigma$ hold. 
Since $\sigma$ implies $\pi$, we obtain $\PRR_{f}(\gn{\neg(\pi \wedge \alpha_{\Pi_n})}) \wedge (\pi \wedge \alpha_{\Pi_n})$ holds.

\paragraph{Case 2:} $\Gamma = \Pi_n$. \\
We show that $\PA + \neg \Con_T$ proves
\[
    \bigl( \PRR_{f}(\gn{\pi\wedge \alpha_{\Pi_n}}) \wedge \neg (\pi\wedge \alpha_{\Pi_n}) \bigr) \lor \bigl( \PRR_{f}(\gn{\neg(\sigma \wedge \alpha_{\Sigma_n})}) \wedge (\sigma \wedge \alpha_{\Sigma_n}) \bigr).
\]
We argue in $\PA$. 
Suppose that the bell rings at Stage $m$. 
We distinguish the following two cases.

\paragraph{Case 2.1}: The bell rings because of (i') or (iii') at Stage $m$. \\
We have $f(k_m) = \pi \wedge \alpha_{\Pi_n}$ and $f$ does not output $\neg(\pi \wedge \alpha_{\Pi_n})$ before Stage $m$, and so $\PRR_{f}(\gn{\pi \wedge \alpha_{\Pi_n}})$ holds. Since $\PRR_{f}(\gn{\pi \wedge \alpha_{\Pi_n}})$ implies $\neg \pi$, we obtain $\PRR_{f}(\gn{\pi \wedge \alpha_{\Pi_n}}) \wedge \neg(\pi \wedge \alpha_{\Pi_n})$.

\paragraph{Case 2.2}: The bell rings because of (ii) at Stage $m$. \\
We have $f(k_m) = \neg (\pi \wedge \alpha_{\Pi_n})$ and $f(k_{m}+1) = \neg (\sigma \wedge \alpha_{\Sigma_n})$. 
Since $f$ does not output $\neg \neg (\sigma \wedge \alpha_{\Sigma_n})$ and $\pi \wedge \alpha_{\Pi_n}$ before Stage $m$, $\PRR_{f} (\gn{\neg(\sigma \wedge \alpha_{\Sigma_n})})$ and $\sigma$ hold. 
We then obtain that $\PRR_{f} (\gn{\neg(\sigma \wedge \alpha_{\Sigma_n})}) \wedge (\sigma \wedge \alpha_{\Sigma_n})$ holds.
\end{proof}

We prove the second statement of the theorem. 

\begin{cl}\label{cl:nonPi1Cons}
$T + \Rfn_{\Gamma^{d}}(\PRR_{f}) \nvdash \Con_T$.
\end{cl}

\begin{proof}
Suppose, towards a contradiction, that $T + \Rfn_{\Gamma^{d}}(\PRR_{f}) \vdash \Con_T$. 
By the second incompleteness theorem, we have $T \nvdash \Con_T$, and thus we find some $j \geq 1$ and some distinct $\Gamma^{d}$ formulas $\varphi_{0}, \ldots, \varphi_{j-1}$ such that
\begin{equation}\label{fml2}
    T \vdash \bigwedge_{i < j}(\PRR_{f}(\gn{\varphi_i}) \to \varphi_i) \to \Con_{T}. 
\end{equation}
If $T \vdash \varphi_{i_0}$ for some $i_0 < j$, then $T \vdash \PRR_{f}(\gn{\varphi_{i_0}}) \to \varphi_{i_0}$ and hence this is removed from the assumption of (\ref{fml2}). So, we may assume $T \nvdash \varphi_i$ for all $i < j$. 
Note that $\bigwedge_{i < j} \bigl( \PR_{f}(\gn{\neg \varphi_i}) \prec \PR_{f}(\gn{\varphi_i})\bigr)$ implies $\bigwedge_{i < j} \neg \PRR_{f}(\gn{\varphi_i})$, and $\bigwedge_{i < j} \neg \PRR_{f}(\gn{\varphi_i})$ implies $\bigwedge_{i < j} (\PRR_{f}(\gn{\varphi_i}) \to \varphi_i)$. 
It follows from (\ref{fml2}) that
\[
    T + \neg \Con_T \vdash \neg \bigwedge_{i < j} \bigl( \PR_{f}(\gn{\neg \varphi_i}) \prec \PR_{f}(\gn{\varphi_i})\bigr). 
\]
It is known that $\neg \Con_T$ is $\Pi_1$-conservative over $T$, so we obtain
\[
    T \vdash \neg \bigwedge_{i < j} \bigl( \PR_{f}(\gn{\neg \varphi_i}) \prec \PR_{f}(\gn{\varphi_i})\bigr).
\]
Since $T \nvdash \varphi_i$ for all $i < j$, the bell must ring at some stage in the standard model $\N$. 
By Claim \ref{cl:bell3}, $T$ is inconsistent, a contradiction. 
Therefore, we conclude that $T + \Rfn_{\Gamma^{d}}(\PRR_{f}) \nvdash \Con_T$.
\end{proof}

We have proved Theorem \ref{RosThm3}.
\end{proof}

\subsection{Rosser predicate for which $\Pi_1$-conservation does not hold uniformly}\label{uniform}

This subsection is devoted to proving the following theorem. 
Note that the following theorem also gives an alternative proof of Fact \ref{RosnProvCon}. 

\setcounter{cl}{0}

\begin{thm}\label{RosThm2}
There exists a Rosser provability predicate $\PRR_{T}(x)$ of $T$ satisfying the following conditions: 
\begin{enumerate}
    \item $T+ \Rfn_{\Gamma^{d}}(\PRR_{T})$ is not $\Pi_1$-conservative over $T+ \Rfn_{\Gamma} (\PRR_{T})$ for any $\Gamma \in \{ \Sigma_{n}, \Pi_{n} \mid n \geq 1\}$, 
    \item $T + \Rfn (\PRR_{T}) \nvdash \Con_T$.
    \end{enumerate}
\end{thm}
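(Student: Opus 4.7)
The plan is to fuse the constructions of Theorems \ref{RosThm1} and \ref{RosThm3} into a single Rosser provability predicate $\PRR_f(x)$ that simultaneously encodes a non-conservativity mechanism for each $\Gamma \in \{\Sigma_n, \Pi_n \mid n \geq 1\}$ while retaining the Theorem \ref{RosThm1}-style safeguard that keeps $\Con_T$ out of $T + \Rfn(\PRR_T)$.

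Using a simultaneous application of the formalized recursion theorem, I would define a $\Delta_1(\PA)$ function $f$ together with, for each $\Gamma$, a $\Pi_1$ fixed-point sentence $\pi_\Gamma$ and a $\Sigma_1$ fixed-point sentence $\sigma_\Gamma$ satisfying Theorem \ref{RosThm3}-style fixed-point equivalences relative to $\PRR_f$. The function $f$ is then defined in two procedures as before: Procedure 1, in which at each stage $m$ I scan each newly certified $T$-theorem $\varphi \in P_{T,m} \setminus P_{T,m-1}$ for attack patterns, and Procedure 2, in which $f$ enumerates all $\LA$-formulas as in Theorem \ref{RosThm1}. In Procedure 1, two kinds of attack are recognized: (i) for some $\Gamma$, $\varphi$ fits one of the $\Gamma$-specific patterns of Theorem \ref{RosThm3}, with priority given to (i)-matches; or (ii) $\varphi$ fits the Theorem \ref{RosThm1}-style pattern, namely $\varphi \equiv \bigwedge_{i<n}(\PRR_f(\gn{\psi_i}) \to \psi_i) \to \pi$ for some $\Pi_1$ sentence $\pi$ with a witness $r \leq m$ of $\True_{\Sigma_1}(\gn{\neg\pi})$. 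Upon a match, $f$ performs the designated output assignment and transitions to Procedure 2.

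The verification has three components. First, an analogue of Claims \ref{cl:bell1} and \ref{cl:bell3} establishes $\PA \vdash \exists x\, \Bell_f(x) \leftrightarrow \neg \Con_T$, so $\PR_f$ and $\PRR_f$ are proper provability and Rosser provability predicates of $T$. Second, for each fixed $\Gamma$, the $\Gamma$-specific part of the construction reproduces the argument of Theorem \ref{RosThm3} to yield a $\Pi_1$ witness $\psi_\Gamma$ with $T + \Rfn_{\Gamma^d}(\PRR_f) \vdash \psi_\Gamma$; non-provability of $\psi_\Gamma$ from $T + \Rfn_\Gamma(\PRR_f)$ follows as in Claim \ref{cl:nonPi1Cons}, using the $\Pi_1$-conservativity of $T + \neg \Con_T$ over $T$. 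Third, Clause 2 follows from an analogue of Claim \ref{cl:Pi1}: the (ii)-clause forces any $\Pi_1$ sentence $\pi$ provable from $T + \Rfn(\PRR_f)$ to satisfy $\PA \vdash \neg \pi \to \neg \Con_T$, so taking $\pi \equiv \Con_T$ and applying L\"ob's theorem would force $T \vdash \neg \Con_T$, contradicting the consistency of $T$.

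The main obstacle is that the witness of non-conservativity in Theorem \ref{RosThm3} is literally $\Con_T$, which would contradict Clause 2 of the present theorem; each $\Gamma$-specific mechanism must therefore be arranged to produce a \emph{distinct} $\Pi_1$ witness $\psi_\Gamma$ tied to the fixed points $\pi_\Gamma, \sigma_\Gamma$ rather than to the global $\Con_T$. One must also verify that the (i)- and (ii)-triggers do not mutually preempt each other, and that the internal $\PA$-formalization of all relevant claims (bell equivalence and the analogues of Claims \ref{cl:main1} and \ref{cl:Pi1}) goes through with the expanded list of triggers for a single combined $f$.
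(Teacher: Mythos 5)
Your high-level plan (a bell-style construction with one diagonalization per $\Gamma$) points in the right direction, but the proposal stops exactly where the mathematical work begins. You correctly observe that the Theorem \ref{RosThm3} mechanism cannot be reused verbatim because its witness is $\Con_T$, and you say each $\Gamma$-specific mechanism ``must be arranged to produce a distinct $\Pi_1$ witness $\psi_\Gamma$ tied to the fixed points $\pi_\Gamma,\sigma_\Gamma$'' --- but those fixed points are engineered precisely so that $\Rfn_\Gamma$ proves $\Con_T$, so ``reproducing the argument of Theorem \ref{RosThm3}'' with a different witness is not a modification, it is a different construction that you never specify. The paper's actual solution is a new family of Rosser-style fixed points: for $\Gamma\neq\Sigma_1$, a $\Pi_1$ sentence $\psi_\Gamma$ with $\PA\vdash\psi_\Gamma\leftrightarrow\neg\PRR_g(\gn{\psi_\Gamma\wedge\alpha_\Gamma})$ (and a $\Sigma_1$ auxiliary $\beta$ for $\Gamma=\Sigma_1$), together with a trigger that watches for $T$-proofs of $\bigwedge_{i<j}(\PRR_g(\gn{\gamma_i})\to\gamma_i)\to\psi_\Gamma\wedge\alpha_\Gamma$ with the $\gamma_i$ in $\Gamma^d$ and, upon finding one, outputs $\psi_\Gamma\wedge\alpha_\Gamma$ first and then the $\neg\gamma_i$ in descending order of length, forcing $T$'s inconsistency. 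The positive half $T+\Rfn_{\Gamma^d}(\PRR_g)\vdash\psi_{\Gamma^d}$ then falls out of the fixed-point equation directly (not via $\Con_T$), and the negative half from the trigger. Without this design, Clause 1 is not proved.

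For Clause 2 you graft on the Theorem \ref{RosThm1} safeguard (ringing the bell on proofs of $\Rfn$-implications with false $\Pi_1$ conclusions) and an analogue of Claim \ref{cl:Pi1}. This is unnecessary and creates the very interference problems you flag without resolving: the safeguard must provably not fire on the $\Pi_1$ sentences $\psi_\Gamma\wedge\alpha_\Gamma$ that $T+\Rfn(\PRR_T)$ is supposed to prove (so you would have to verify they are true in $\N$), and Procedure 2 would have to branch between the tautological-consequence enumeration of Theorem \ref{RosThm1} and the targeted output order that the $\Gamma$-specific triggers require, with the bell-iff-inconsistency claim re-proved for every combination. The paper avoids all of this: Clause 2 is a two-line consequence of Clause 1. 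If $T+\Rfn(\PRR_T)\vdash\Con_T$, then $T+\Rfn_\Gamma(\PRR_T)\vdash\Con_T$ for some $\Gamma\supsetneq\Pi_1$, and Proposition \ref{prop:Cons} then yields that $T+\Rfn(\PRR_T)$ is $\Gamma$-conservative, hence $T+\Rfn_{\Gamma^d}(\PRR_T)$ is $\Pi_1$-conservative, over $T+\Rfn_\Gamma(\PRR_T)$, contradicting Clause 1. You should drop the safeguard entirely and derive Clause 2 this way.
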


\begin{proof}
We define a $\Delta_1(\PA)$-definable function $g$ outputting all theorems of $T$. 
Formulas $\PR_{g}(x)$ and $\PRR_{g}(x)$ based on $g$ are defined. 
By using the fixed point lemma, we can effectively find an effective sequence $\{\psi_{\Gamma}\}_{\Gamma \in \{\Sigma_n, \Pi_n \mid n \geq 1\}}$ of $\Pi_1$ sentences such that: 
\begin{itemize}
\item For $\Gamma = \Sigma_1$: $\psi_{\Sigma_1}$ is $\neg \PRR_{g}(\gn{\beta \wedge \alpha_{\Sigma_{1}}})$ where $\beta$ is a $\Sigma_1$ sentence satisfying
\[
    \PA \vdash \beta \leftrightarrow \PR_{g}(\gn{\neg(\beta \wedge \alpha_{\Sigma_1})}) \preccurlyeq \PR_{g}(\gn{\beta \wedge \alpha_{\Sigma_1}}).
\]

\item For $\Gamma \neq \Sigma_{1}$: $\psi_{\Gamma}$ satisfies
\[
    \PA \vdash \psi_{\Gamma} \leftrightarrow \neg \PRR_{g}(\gn{\psi_{\Gamma} \wedge \alpha_{\Gamma}}).
\]
\end{itemize}
Note that each $\psi_{\Gamma}$ is defined by using $g$. 
By using the formalized recursion theorem, we can use $\psi_{\Gamma}$ in the definition of $g$. 
Here, we define the function $g$. 
Let $k_{0} : =0$

\vspace{0.1in}
\textsc{Procedure 1}: The bell has not yet rung. 

Stage $m$: If $P_{T, m} = P_{T, m-1}$, then let $k_{m+1} : = k_m$ and go to Stage $m+1$.

If $\varphi \in P_{T, m} \setminus P_{T, m-1}$, then we distinguish the following three cases:

\begin{enumerate}
\item[(i):] If there exist $\Gamma$, $j$, and distinct $\Gamma^d$ formulas $\gamma_{0}, \ldots, \gamma_{j-1}$ such that $g$ does not output these formulas before Stage $m$ and $\varphi$ is $\bigwedge_{i<j} \bigl( \PRR_{g}(\gn{\gamma_{i}}) \to \gamma_{i}\bigr) \to \psi_{\Gamma} \wedge \alpha_{\Gamma}$, then we define $g(k_m)$ depending on whether $\Gamma = \Sigma_1$ or not as follows: 
\begin{itemize}
\item
If $\Gamma = \Sigma_1$, define $g(k_m) := \beta \wedge \alpha_{\Sigma_1}$.
\item
If $\Gamma \neq \Sigma_1$, define $g(k_m) := \psi_{\Gamma} \wedge \alpha_{\Gamma}$.
\end{itemize}
Let $\gamma_{0}', \ldots,\gamma_{j-1}'$ be the rearrangement of $\gamma_{0}, \ldots ,\gamma_{j-1}$ in the descending order of length, and define $g(k_{m}+1+ i) := \neg \gamma_{i}'$ for each $i<j$. 
Then, ring the bell and go to Procedure 2. 

\item[(ii):] If $\varphi$ is one of $\beta \wedge \alpha_{\Sigma_1}$, $\neg(\beta \wedge \alpha_{\Sigma_1})$, and $\neg (\psi_{\Gamma} \wedge \alpha_{\Gamma})$ for some $\Gamma \neq \Sigma_1$, then define $g(k_m) := \varphi$. 
Ring the bell and go to Procedure 2.

\item[(iii):] Otherwise, define $g(k_m) : = \varphi$ and $k_{m+1} := k_{m}+1$. 
Go to Stage $m+1$.
\end{enumerate}

\vspace{0.1in}
\textsc{Procedure 2}: The function $g$ outputs all $\LA$-formulas.

We finish the construction of the function $g$.
Let $\Bell_g(x)$ be an $\LA$-formula saying ``the bell of $g$ rings at Stage $x$''. 

\begin{cl}\label{cl:bell2}
$\PA \vdash \exists x\, \Bell_g(x) \leftrightarrow \neg \Con_T$. 
\end{cl}
\begin{proof}
We argue in $\PA$. 
The implication $(\leftarrow)$ is obvious by considering (ii), and so we prove the implication $(\rightarrow)$. 
Suppose that the bell rings at Stage $m$. 
We distinguish the following five cases.

\paragraph{Case 1:} $m$ is a $T$-proof of $\bigwedge_{i<j} \bigl( \PRR_{g}(\gn{\gamma_{i}}) \to \gamma_{i}\bigr) \to \psi_{\Sigma_1} \wedge \alpha_{\Sigma_1}$ for some $j$ and distinct $\Pi_1$ formulas $\gamma_{0}, \ldots ,\gamma_{j-1}$ such that $g$ does not output $\gamma_{0}, \ldots ,\gamma_{j-1}$ before Stage $m$. \\
Let $\gamma_{0}', \ldots,\gamma_{j-1}'$ be the rearrangement of $\gamma_{0}, \ldots ,\gamma_{j-1}$ in the descending order of length.
Then, $g(k_m) = \beta \wedge \alpha_{\Sigma_1}$ and $g(k_{m}+1+i)=\neg \gamma_{i}'$ for every $i<j$. 
Since $\beta \wedge \alpha_{\Sigma_1}$ is not $\Pi_1$ but $\gamma_{i}'$ is $\Pi_{1}$, $\gamma_{i}'$ is different from $\beta \wedge \alpha_{\Sigma_1}$.
Also $\gamma_{i}'$ is different from any of $\neg \gamma_{0}', \ldots ,\neg \gamma_{i-1}'$ because of the order of the rearrangement. 
Thus, we obtain $\bigwedge_{i<j} \bigl( \PR_{g}(\gn{\neg \gamma_{i}'}) \preccurlyeq \PR_{g}(\gn{\gamma_{i}'}) \bigr)$. 
This $\Sigma_1$ sentence is also provable in $T$ because of formalized $\Sigma_1$-completeness. 
Then, $T$ proves $\bigwedge_{i<j} \neg \PRR_{g}(\gn{\gamma_{i}'})$, and hence $T$ also proves $\bigwedge_{i<j} \bigl( \PRR_{g}(\gn{\gamma_{i}}) \to \gamma_{i}\bigr)$. 
Since $\bigwedge_{i<j} \bigl( \PRR_{g}(\gn{\gamma_{i}}) \to \gamma_{i}\bigr) \to \psi_{\Sigma_1} \wedge \alpha_{\Sigma_1}$ is $T$-provable, we obtain that $\psi_{\Sigma_1}$ is $T$-provable. 
That is, $\neg \PRR_{g}(\gn{\beta \wedge \alpha_{\Sigma_{1}}})$ is $T$-provable. 

Since we have $g(k_m) = \beta \wedge \alpha_{\Sigma_1}$ and the bell does not ring before Stage $m$, we obtain that $\PRR_{g}(\gn{\beta \wedge \alpha_{\Sigma_1}})$ holds. 
By formalized $\Sigma_1$-completeness theorem, $T$ proves $\PRR_{g}(\gn{\beta \wedge \alpha_{\Sigma_1}})$. 
Thus, $T$ is inconsistent.

\paragraph{Case 2:} $m$ is a $T$-proof of $\bigwedge_{i<j} \bigl( \PRR_{g}(\gn{\gamma_{i}}) \to \gamma_{i}\bigr) \to \psi_{\Gamma} \wedge \alpha_{\Gamma}$ for some $\Gamma \neq \Sigma_1$, $j$, and distinct $\Gamma^d$ formulas $\gamma_{0}, \ldots ,\gamma_{j-1}$. \\
Since we have $g(k_{m})= \psi_{\Gamma} \wedge \alpha_{\Gamma}$ and the bell does not ring before Stage $m$, $\PRR_{g}(\gn{\psi_{\Gamma} \wedge \alpha_{\Gamma}})$ holds. 
By formalized $\Sigma_1$ completeness, $T$ proves $\PRR_{g}(\gn{\psi_{\Gamma} \wedge \alpha_{\Gamma}})$. 
As in Case 1, it can be shown that $T$ proves $\neg \PRR_{g}(\gn{\psi_{\Gamma} \wedge \alpha_{\Gamma}})$. 
Therefore, $T$ is inconsistent.

\paragraph{Case 3:} $m$ is a $T$-proof of $\beta \wedge \alpha_{\Sigma_1}$. \\
By the choice of $\beta$, $T$ proves $\PR_{g}(\gn{\neg (\beta \wedge \alpha_{\Sigma_{1}})}) \preccurlyeq \PR_{g}(\gn{\beta \wedge \alpha_{\Sigma_1}})$. 
Then, $\neg \PRR_{g}(\gn{\beta \wedge \alpha_{\Sigma_1}})$ is provable in $T$. 
Since $g$ does not output $\neg (\beta \wedge \alpha_{\Sigma_1})$ before Stage $m$ and $g(k_m) = \beta \wedge \alpha_{\Sigma_1}$, we have that $\PRR_{g}(\gn{\beta \wedge \alpha_{\Sigma_1}})$ holds. 
Since $\PRR_{g}(\gn{\beta \wedge \alpha_{\Sigma_1}})$ is a $\Sigma_1$ sentence, it is provable in $T$. 
Thus, $T$ is inconsistent. 

\paragraph{Case 4:} $m$ is a $T$-proof of $\neg (\beta \wedge \alpha_{\Sigma_1})$. \\
Since $\alpha_{\Sigma_1}$ is provable, we have that $\neg \beta$ is $T$-provable. Thus, $\neg \bigl( \PR_{g}(\gn{\neg (\beta \wedge \alpha_{\Sigma_1})}) \preccurlyeq \PR_{g}(\gn{\beta \wedge \alpha_{\Sigma_1}}) \bigr)$ is provable in $T$. 
Since $g(k_m) = \neg (\beta \wedge \alpha_{\Sigma_1})$ and $g$ does not output $\beta \wedge \alpha_{\Sigma_1}$ before Stage $m$, we have that $\PR_{g}(\gn{\neg (\beta \wedge \alpha_{\Sigma_1})}) \preccurlyeq \PR_{g}(\gn{\beta \wedge \alpha_{\Sigma_1}})$ holds. 
By $\Sigma_1$-completeness, $T$ proves $\PR_{g}(\gn{\neg (\beta \wedge \alpha_{\Sigma_1})}) \preccurlyeq \PR_{g}(\gn{\beta \wedge \alpha_{\Sigma_1}})$. Thus, $T$ is inconsistent.

\paragraph{Case 5:} $m$ is a $T$-proof of $\neg (\psi_{\Gamma} \wedge \alpha_{\Gamma})$ for some $\Gamma \neq \Sigma_1$. \\
As in Case 4, we have that $\neg \psi_{\Gamma}$ and $\PRR_{g}(\gn{\psi_{\Gamma} \wedge \alpha_{\Gamma}})$ are provable in $T$. 
Since $g$ does not output $\psi_{\Gamma} \wedge \alpha_{\Gamma}$ before Stage $m$, we obtain that $\PR_{g}(\gn{\neg (\psi_{\Gamma} \wedge \alpha_{\Gamma})}) \preccurlyeq \PR_{g}(\gn{\psi_{\Gamma} \wedge \alpha_{\Gamma}})$ holds and it is provable in $T$. 
We then obtain that $\neg \PRR_{g}(\gn{\psi_{\Gamma} \wedge \alpha_{\Gamma}})$ is $T$-provable. 
Hence, $T$ is inconsistent.
\end{proof}

As in Claim 2 of the proof of Theorem \ref{RosThm1}, we obtain the following claim. 
Thus, $\PRR_{g}(x)$ is a Rosser provability predicate of $T$. 

\begin{cl}\label{cl:equiv2}
$\PA \vdash \forall x \bigl( \Prov_{T}(x) \leftrightarrow \PR_{g}(x) \bigr)$
\end{cl}

Since $\psi_{\Gamma^d}$ is a $\Pi_1$ sentence for each $\Gamma \in \{ \Sigma_{n}, \Pi_{n} \mid n \geq 1\}$, the following claim immediately implies that $T+ \Rfn_{\Gamma^{d}} (\PRR_{g})$ is not $\Pi_1$-conservative over $T+ \Rfn_{\Gamma} (\PRR_{g})$.

\begin{cl}
For any $\Gamma \in \{ \Sigma_{n}, \Pi_{n} \mid n \geq 1\}$, we have $T + \Rfn_{\Gamma^{d}} (\PRR_{g}) \vdash \psi_{\Gamma^d}$ and $T + \Rfn_{\Gamma} (\PRR_{g}) \nvdash \psi_{\Gamma^d}$.
\end{cl}

\begin{proof}
We distinguish the following two cases.

\paragraph{Case 1}: $\Gamma = \Pi_1$. \\
Since $\beta \wedge \alpha_{\Sigma_1}$ is a $\Sigma_1$ sentence, we have $T + \Rfn_{\Sigma_1}(\PRR_{g}) \vdash \PRR_{g}(\gn{\beta \wedge \alpha_{\Sigma_1}}) \to \beta \land \alpha_{\Sigma_1}$. 
It follows $T + \Rfn_{\Sigma_1}(\PRR_{g}) \vdash \neg \psi_{\Sigma_1} \to \beta$.
Since $\PR_{g}(\gn{\neg(\beta \wedge \alpha_{\Sigma_1})}) \preccurlyeq \PR_{g}(\gn{\beta \wedge \alpha_{\Sigma_1}})$ implies $\neg \PRR_{g}(\gn{\beta \wedge \alpha_{\Sigma_1}})$, we obtain $T \vdash \beta \to \psi_{\Sigma_1}$. 
Thus, we have $T + \Rfn_{\Sigma_1}(\PRR_{g}) \vdash \psi_{\Sigma_1}$.

We shall prove $T + \Rfn_{\Pi_1} (\PRR_{g}) \nvdash \psi_{\Sigma_1}$. 
Suppose, towards a contradiction, that $T + \Rfn_{\Pi_1} (\PRR_{g}) \vdash \psi_{\Sigma_1}$. 
Since $\alpha_{\Sigma_1}$ is provable in predicate logic, $T + \Rfn_{\Pi_1} (\PRR_{g}) \vdash \psi_{\Sigma_1} \wedge \alpha_{\Sigma_1}$. 
Then, for some $j$ and distinct $\Pi_1$ formulas $\gamma_{0}, \ldots, \gamma_{j-1}$, we have 
\begin{align}\label{fml1}
    T \vdash \bigwedge_{i<j} \bigl( \PRR_{g}(\gn{\gamma_i}) \to \gamma_{i} \bigr) \to \psi_{\Sigma_1} \wedge \alpha_{\Sigma_1}.
\end{align}
As in the proof of Claim \ref{cl:nonPi1Cons} of the proof of Theorem \ref{RosThm3}, we may assume that $T \nvdash \gamma_{i}$ for every $i<j$. 
Then, the bell must ring in the standard model $\N$ of arithmetic. 
By Claim \ref{cl:bell2}, we have $\N \models \neg \Con_T$, a contradiction. 

\paragraph{Case 2}: $\Gamma \neq \Pi_1$. \\
Since $\Gamma^{d} \supseteq \Pi_1$ and $\psi_{\Gamma^d} \wedge \alpha_{\Gamma^d}$ is a $\Gamma^d$ sentence, we obtain $T + \Rfn_{\Gamma^d}(\PRR_{g}) \vdash \PRR_{g}(\gn{\psi_{\Gamma^d} \wedge \alpha_{\Gamma^d}}) \to \psi_{\Gamma^d} \wedge \alpha_{\Gamma^d}$. 
Since $\psi_{\Gamma^d}$ is equivalent to $\neg \PRR_{g}(\gn{\psi_{\Gamma^d} \wedge \alpha_{\Gamma^d}})$, we obtain $T + \Rfn_{\Gamma^d}(\PRR_{g}) \vdash \psi_{\Gamma^d}$.
Also as in Case 1, we can prove $T + \Rfn_{\Gamma} (\PRR_{g}) \nvdash \psi_{\Gamma^d}$. 
\end{proof}

Finally, we prove the second clause of the theorem. 

\begin{cl}
$T + \Rfn (\PRR_{g}) \nvdash \Con_T$.
\end{cl}

\begin{proof}
Suppose, towards a contradiction, that $T + \Rfn (\PRR_{g}) \vdash \Con_T$. 
We then find $\Gamma \supsetneq \Pi_1$ such that $T + \Rfn_{\Gamma}(\PRR_{g}) \vdash \Con_T$.
By Proposition \ref{prop:Cons}, we have that $T + \Rfn(\PRR_{g})$ is $\Gamma$-conservative over $T + \Rfn_{\Gamma}(\PRR_{g})$. 
In particular, $T + \Rfn_{\Gamma^d}(\PRR_{g})$ is $\Pi_1$-conservative over $T + \Rfn_{\Gamma}(\PRR_{g})$. 
This contradicts Claim \ref{cl:nonPi1Cons}. 
\end{proof}
This completes our proof of Theorem \ref{RosThm2}. 
\end{proof}

\section{$\Sigma_{1}$-conservation property and $\Sigma_1$-soundness}\label{sec:Sigma1}

Kreisel and L\'evy \cite[Theorem 20]{KL68} showed that if $T$ is $\Sigma_1$-sound, then so is $T + \Rfn(T)$ (see also \cite[Lemma 6.1]{Kur16}). 
Then, for every theory $S$ with $T + \Rfn(T) \vdash S \vdash T$, by $\Sigma_1$-completeness, it follows that the $\Sigma_1$-soundness of $T$ implies the $\Sigma_1$-conservativity of $S$ over $T$. 
Smory\'nski proved the converse implication in the case of $S = T + \Con_T$. 

\begin{thm}[Smory\'nski {\cite[p.~197]{Smo80}\cite[p.~366]{Smo81}}]\label{Smo}
$\Con_T$ is $\Sigma_1$-conservative over $T$ if and only if $T$ is $\Sigma_1$-sound. 
\end{thm}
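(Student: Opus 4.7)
My approach is to prove the two directions separately; the backward direction is essentially free, while the forward direction calls for a witness-comparison construction.

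\textbf{Backward direction ($\Leftarrow$).} This is an immediate consequence of the Kreisel--L\'evy result quoted just above the theorem, together with formalized $\Sigma_1$-completeness of $T$. Indeed, $\Con_T$ is literally the instance $\Prov_T(\gn{0=1}) \to 0 = 1$ of $\Sigma_1$ local reflection, so $T + \Con_T \subseteq T + \Rfn_{\Sigma_1}(T) \subseteq T + \Rfn(T)$. If $T$ is $\Sigma_1$-sound, Kreisel--L\'evy gives $\Sigma_1$-soundness of $T + \Rfn(T)$, and hence of the subtheory $T + \Con_T$. Therefore, any $\Sigma_1$ sentence $\sigma$ provable in $T + \Con_T$ is true in $\N$, and $\Sigma_1$-completeness of $T$ gives $T \vdash \sigma$.

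\textbf{Forward direction ($\Rightarrow$).} I argue by contraposition. Suppose $T$ is not $\Sigma_1$-sound, and fix a false $\Sigma_1$ sentence $\sigma \equiv \exists y\, \delta(y)$ (with $\delta$ a $\Delta_0$ formula) such that $T \vdash \sigma$. Since $\sigma$ is false in $\N$, we have $T \vdash \neg \delta(\num{n})$ for every $n \in \omega$ by the decidability of $\delta$ on standard numerals. Consider the $\Sigma_1$ witness-comparison sentence
\[
\tau \ \equiv\ \sigma \preccurlyeq \neg \Con_T \ \equiv\ \exists y \bigl( \delta(y) \wedge \forall z < y\, \neg \Proof_T(\gn{0=1}, z) \bigr).
\]
I claim $T + \Con_T \vdash \tau$ and $T \nvdash \tau$, which gives the required failure of $\Sigma_1$-conservation. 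The first claim is immediate: in $T + \Con_T$, $\sigma$ provides a witness $y$, and $\Con_T$ ensures the inner universal conjunct holds vacuously. For the second claim, introduce a fresh constant symbol $c$ and consider the extension
\[
T^{*} \ \equiv\ T + \Proof_T(\gn{0=1}, c) + \forall y (y \leq c \to \neg \delta(y)).
\]
Any model of $T^{*}$ satisfies $\neg \tau$, since $c$ witnesses $\neg \Con_T$ with no $\delta$-witness at or below it. The consistency of $T^{*}$ is established by compactness: by G\"odel's second incompleteness theorem, $T + \Proof_T(\gn{0=1}, c)$ is consistent, and since every standard numeral is $T$-refutable on $\delta$, the additional universal axiom can be added by realising $c$ nonstandardly in a model where no $\delta$-witness lies in the initial segment below $c$.

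\textbf{Main obstacle.} The technical heart of the plan is this last consistency assertion, equivalently that $T$ does not prove the $\Pi_1$ sentence $\forall x (\Proof_T(\gn{0=1}, x) \to \exists y \leq x\, \delta(y))$. The point to exploit is that this $\Pi_1$ sentence holds in $\N$ only vacuously, because $T$ is consistent and so has no standard proof of $0 = 1$; one then combines the freedom provided by G\"odel's second theorem (to interpret $c$ nonstandardly) with the $T$-refutability of $\delta(\num{n})$ for each standard $n$, to check that no finite fragment of $\forall y \leq c\, \neg \delta(y)$ conflicts with $T + \Proof_T(\gn{0=1}, c)$.
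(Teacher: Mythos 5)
The paper does not actually prove this theorem; it is quoted from Smory\'nski with citations only, so your proposal can only be judged on its own terms. Your backward direction is fine: $T + \Con_T \subseteq T + \Rfn(T)$, Kreisel--L\'evy gives $\Sigma_1$-soundness of the latter, and $\Sigma_1$-completeness of $T$ finishes it.

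The forward direction, however, has a genuine gap, and it sits exactly where you locate the ``main obstacle.'' The assertion that $T^{*} = T + \Proof_T(\gn{0=1}, c) + \forall y\, (y \leq c \to \neg\delta(y))$ is consistent --- equivalently, that $T \nvdash \tau$ --- is \emph{false} for some admissible choices of $\sigma$, so no amount of care can rescue the argument as stated for an arbitrary false provable $\Sigma_1$ sentence. Concretely, given any false $T$-provable $\Sigma_1$ sentence $\sigma_0 \equiv \exists y\, \delta_0(y)$, put $\delta(y) :\equiv \delta_0(y) \vee \Proof_T(\gn{0=1}, y)$ and $\sigma :\equiv \exists y\, \delta(y)$. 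This $\sigma$ is $\Sigma_1$, provable in $T$, and false in $\N$ (by the consistency of $T$), so it is a legitimate choice. But then $T \vdash \tau$: arguing in $T$, the least $y_0$ with $\delta(y_0)$ exists since $T \vdash \sigma$, and any $z < y_0$ satisfies $\neg\delta(z)$, hence $\neg\Proof_T(\gn{0=1}, z)$; so $y_0$ witnesses $\tau$. Correspondingly your $T^{*}$ is outright inconsistent, since $\neg\delta(c)$ contains the conjunct $\neg\Proof_T(\gn{0=1}, c)$. The underlying error is in the compactness step: $\forall y \leq c\, \neg\delta(y)$ is a single sentence constraining all (including nonstandard) elements below $c$, not the schema $\{\neg\delta(\num{n}) \mid n \in \omega\}$; the latter is already provable in $T$ and gives no control over nonstandard $y \leq c$, and G\"odel's second theorem only yields a model with \emph{some} nonstandard proof of $0=1$, with no leverage on whether it lies below or above the least $\delta$-witness. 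A correct proof of this direction must either choose the witnessing $\Sigma_1$ sentence much more carefully or (as in the standard treatments) run a self-referential, Rosser/Guaspari-style fixed-point construction; the bare witness comparison $\sigma \preccurlyeq \neg\Con_T$ for an arbitrary false provable $\sigma$ does not suffice.
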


Recall that we assumed that $T$ always denotes a consistent theory (cf.~the beginning of Section \ref{sec:pre}). 
The consistency of $T$ is obviously needed to show the right-to-left implication of Theorem \ref{Smo}. 
We can generalize Smory\'nski's theorem as follows. 

\begin{cor}\label{cor:SmoGen}
For each $n \in \omega$, the following are equivalent:
\begin{enumerate}
    \item $T$ is $\Sigma_1$-sound. 
    \item $T + \Con_T^n$ is consistent and for any $S$ with $T + \Rfn(T) \vdash S \vdash T + \Con_T^n$, $S$ is $\Sigma_1$-conservative over $T + \Con_T^n$. 
    \item $T + \Con_T^n$ is consistent and $T + \Con_T^{n+1}$ is $\Sigma_1$-conservative over $T + \Con_T^n$. 
\end{enumerate}
\end{cor}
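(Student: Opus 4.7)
The plan is to derive all three implications by reducing to Smory\'nski's Theorem \ref{Smo}, applied to the auxiliary theory $T' := T + \Con_T^n$ in place of $T$. The key observation is that $\Con_{T'}$ is $\PA$-provably equivalent to $\Con_T^{n+1}$: by the formalized deduction theorem, $\Con_{T'}$ is equivalent to $\neg \Prov_T(\gn{\neg \Con_T^n})$, which is exactly $\Con_T^{n+1}$. Moreover, formalized $\Sigma_1$-completeness applied to the $\Sigma_1$ sentence $\neg \Con_T^n$ yields $\PA \vdash \neg \Con_T^n \to \neg \Con_T^{n+1}$, i.e.~$\PA \vdash \Con_T^{n+1} \to \Con_T^n$, so $T + \Con_T^{n+1}$ and $T' + \Con_{T'}$ are deductively equivalent, and clause (3) is precisely the hypothesis of Theorem \ref{Smo} for $T'$.

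For the main direction (3) $\Rightarrow$ (1): by the equivalence above, (3) says that $T'$ is consistent and $T' + \Con_{T'}$ is $\Sigma_1$-conservative over $T'$, so Theorem \ref{Smo} gives that $T'$ is $\Sigma_1$-sound; since $T \subseteq T'$, $T$ inherits $\Sigma_1$-soundness. For (1) $\Rightarrow$ (2), I would first verify by induction that every $\Con_T^n$ is true in $\mathbb{N}$: if $\Con_T^n$ holds, then $\neg \Con_T^n$ is a false $\Sigma_1$ sentence, so $\Sigma_1$-soundness of $T$ gives $T \nvdash \neg \Con_T^n$, i.e.~$\Con_T^{n+1}$ holds; in particular $T + \Con_T^n$ is consistent. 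For the conservativity clause I would invoke the Kreisel--L\'evy preservation of $\Sigma_1$-soundness up to $T + \Rfn(T)$, so that any $S$ sandwiched between $T + \Con_T^n$ and $T + \Rfn(T)$ is $\Sigma_1$-sound, hence $\Sigma_1$-conservative over $T + \Con_T^n$ by $\Sigma_1$-completeness of $\PA$. For (2) $\Rightarrow$ (3), I would take $S := T + \Con_T^{n+1}$ in (2), verifying the required sandwich by $\PA \vdash \Con_T^{n+1} \to \Con_T^n$ together with a short induction showing $T + \Rfn(T) \vdash \Con_T^n$ for every $n$, using $\Rfn_{\Sigma_1}(T)$ applied successively to the $\Sigma_1$ sentences $\neg \Con_T^k$.

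No step is substantially difficult; the main point that merits explicit care is the $\PA$-internal equivalence $\Con_{T'} \leftrightarrow \Con_T^{n+1}$, which is a routine but essential formalization of the deduction theorem, together with the observation that Theorem \ref{Smo} is applicable to any recursively axiomatized consistent extension of $\PA$, hence in particular to $T' = T + \Con_T^n$.
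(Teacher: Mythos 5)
Your proposal is correct and follows essentially the same route as the paper: the key direction $(3 \Rightarrow 1)$ is handled exactly as in the paper's proof, by identifying $\Con_T^{n+1}$ with $\Con_{T + \Con_T^n}$ and applying Smory\'nski's theorem to the auxiliary theory $T + \Con_T^n$. The paper dismisses $(1 \Rightarrow 2)$ and $(2 \Rightarrow 3)$ as straightforward; your fuller treatment of these (via Kreisel--L\'evy, $\Sigma_1$-completeness, and the induction showing $T + \Rfn(T) \vdash \Con_T^n$) is accurate and matches the remarks preceding the corollary in the paper.
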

\begin{proof}
$(1 \Rightarrow 2)$ and $(2 \Rightarrow 3)$ are straightforward. 
We prove $(3 \Rightarrow 1)$. 
Suppose that $T + \Con_T^n$ is consistent and $T + \Con_T^{n+1}$ is $\Sigma_1$-conservative over $T + \Con_T^n$. 
Since $\Con_T^{n+1}$ is equivalent to $\Con_{T + \Con_T^n}$, by Smory\'nski's theorem, we have that $T + \Con_T^n$ is $\Sigma_1$-sound. 
Hence, $T$ is $\Sigma_1$-sound. 
\end{proof}

Here, we focus on the second clause of Corollary \ref{cor:SmoGen}. 
Beklemishev's conservation theorem tells us that $T + \Rfn(T)$ is $\Sigma_1$-conservative over $T + \Rfn_{\Sigma_1}(T)$. 
Hence, the $\Sigma_1$-conservativity of a theory $S$ with $T + \Rfn(T) \vdash S \vdash T + \Rfn_{\Sigma_1}(T)$ over $T + \Rfn_{\Sigma_1}(T)$ does not imply the $\Sigma_1$-soundness of $T$. 
Let $T_\omega : = T + \{\Con_T^n \mid n \in \omega\}$, and we propose the following problem. 

\begin{prob}
If $T_\omega$ is consistent, does the $\Sigma_1$-conservativity of $T + \Rfn(T)$ over $T_\omega$ imply the $\Sigma_1$-soundness of $T$?
\end{prob}

Goryachev \cite[Theorem 3]{Gor86} proved that $T + \Rfn(T)$ is $\Pi_1$-conservative over $T_\omega$. 
By formalizing this result in $\PA$, in particular, we have $\PA \vdash \Con_{T + \Rfn(T)} \leftrightarrow \Con_{T_\omega}$. 
Hence, if $T_\omega$ is consistent, then $T + \Rfn(T) \nvdash \Con_{T_\omega}$. 
It follows that Smory\'nski's theorem cannot be used to solve the problem affirmatively.

\subsection{$\Sigma_{1}$-conservation property for Rosser provability predicates}\label{Sigma_1}

It is also known that there is a relationship between $\Sigma_1$-soundness and $\Sigma_1$-conservativity of theories having some axioms based on a Rosser provability predicate as well.
Let $\rho$ be a $\Pi_1$ Rosser sentence of $T$ based on a Rosser provability predicate $\PRR_T(x)$, then the following theorem due to \v{S}vejdar is an improvement of Smory\'nski's theorem because it is known that $\PA \vdash \Con_T \to \rho$.

\begin{thm}[\v{S}vejdar (cf.~{\cite[Exercise 5.2(b)]{Lin03}})]
$\rho$ is $\Sigma_1$-conservative over $T$ if and only if $T$ is $\Sigma_1$-sound. 
\end{thm}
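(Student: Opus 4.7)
The plan is to prove the two directions separately: the forward direction is a short chaining argument via Smory\'nski's theorem, while the reverse direction requires a Rosser-style witness-comparison argument.

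For ``$T$ is $\Sigma_1$-sound $\Rightarrow$ $\rho$ is $\Sigma_1$-conservative over $T$'', I would invoke Smory\'nski's theorem (Theorem~\ref{Smo}) to obtain that $T + \Con_T$ is $\Sigma_1$-conservative over $T$. Since $\PA \vdash \Con_T \to \rho$ (the standard property of $\Pi_1$ Rosser sentences recalled in the remark preceding the statement), $T + \Con_T \vdash \rho$. Hence every $\Sigma_1$ consequence of $T + \rho$ is a $\Sigma_1$ consequence of $T + \Con_T$, and, by Smory\'nski, of $T$.

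For the converse, I would argue by contraposition: assume $T$ is not $\Sigma_1$-sound and exhibit a $\Sigma_1$ sentence $\sigma$ with $T + \rho \vdash \sigma$ and $T \nvdash \sigma$. Using Theorem~\ref{Smo} pick $\sigma_0 \in \Sigma_1$ with $T \vdash \Con_T \to \sigma_0$ but $T \nvdash \sigma_0$, and let $\sigma_1 :\equiv \PR_T(\gn{\neg\rho}) \prec \PR_T(\gn{\rho})$ be the ``dual Rosser'' $\Sigma_1$-sentence. The key observation is that, inside $T + \rho$, any model of $\neg\Con_T$ must satisfy $\sigma_1$: both $\Prov_T(\gn{\rho})$ and $\Prov_T(\gn{\neg\rho})$ hold, Proposition~\ref{prop:wc}(3) applied to this disjunction yields $\neg\rho \vee \sigma_1$, and the hypothesis $\rho$ rules out $\neg\rho$. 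Combined with $T \vdash \Con_T \to \sigma_0$, case analysis on $\Con_T$ gives
\[
    T + \rho \vdash \sigma_0 \vee \sigma_1,
\]
so setting $\sigma := \sigma_0 \vee \sigma_1 \in \Sigma_1$ yields $T + \rho \vdash \sigma$.

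The main obstacle will be verifying $T \nvdash \sigma$. The single-conclusion property of $\Proof_T$ gives $\PA \vdash \sigma_1 \to \rho$, so $T \vdash \sigma$ would entail $T \vdash \sigma_0 \vee \rho$. In the subcase $T \vdash \neg\Con_T$ one has $\PA + \neg\Con_T \vdash \sigma_1 \leftrightarrow \rho$, and the simpler choice $\sigma := \sigma_1$ works directly: $T + \rho \vdash \sigma_1$ while Rosser's theorem ensures $T \nvdash \rho$, hence $T \nvdash \sigma_1$. In the subcase $T \nvdash \neg\Con_T$ I would instead refine $\sigma_0$ via an auxiliary Rosser-style fixed point depending on $\rho$, designed to force a model of $T + \neg\sigma_0 + \neg\rho$ and thus $T \nvdash \sigma$. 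Arranging this refinement so that both the provability in $T + \rho$ and the nonprovability in $T$ survive simultaneously is the delicate combinatorial core of the argument.
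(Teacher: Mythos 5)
The paper does not actually prove this theorem---it is imported from Lindstr\"om with a citation---so there is no in-paper argument to compare against; judged on its own terms, your forward direction is correct and your reverse direction has a genuine gap. The forward direction (route $T+\rho$ through $T+\Con_T$ via $\PA \vdash \Con_T \to \rho$ and then apply Theorem~\ref{Smo}) is fine, as is the derivation $T + \rho \vdash \sigma_0 \lor \sigma_1$ with $\sigma_1 :\equiv \PR_T(\gn{\neg\rho}) \prec \PR_T(\gn{\rho})$. The problem is that the proof must terminate with a concrete $\Sigma_1$ sentence that $T$ does \emph{not} prove, and you only achieve this in the degenerate subcase $T \vdash \neg\Con_T$; in the main subcase you explicitly defer to an unspecified ``auxiliary Rosser-style fixed point,'' which is precisely the content of the hard direction. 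Moreover, your candidate $\sigma_0 \lor \sigma_1$ cannot be certified as it stands: your only non-provability facts are $T \nvdash \sigma_0$ and $T \nvdash \rho$, and since $\PA \vdash \sigma_1 \to \rho$, the hypothesis $T \vdash \sigma_0 \lor \sigma_1$ only yields $T \vdash \sigma_0 \lor \rho$, which contradicts neither. Indeed, $\sigma_1$ is $\Sigma_1$ and $T$-provably implies both $\Prov_T(\gn{\neg\rho})$ and (via provable $\Sigma_1$-completeness and $\D{2}$) $\Prov_T(\gn{\rho})$, so $T \vdash \sigma_1 \to \neg\Con_T$; hence $T \vdash \sigma_0 \lor \sigma_1$ is entirely compatible with everything you have established about $\sigma_0$, namely $T \vdash \Con_T \to \sigma_0$ and $T \nvdash \sigma_0$.

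The missing ingredient is self-reference inside the witness comparison. The known proofs (Smory\'nski's for $\Con_T$, \v{S}vejdar's refinement for $\rho$) fix a false $T$-provable $\Sigma_1$ sentence $\theta \equiv \exists x\, \delta(x)$ and use the fixed-point lemma to build $\sigma$ asserting roughly that a witness of $\theta$ occurs before any $T$-proof of $\sigma$ (combined with the proof data of $\rho$ and $\neg\rho$). Falsity of $\theta$ then forces $\PA \vdash \neg\sigma$ from any standard proof of $\sigma$, giving $T \nvdash \sigma$ outright, while Proposition~\ref{prop:wc}(3) plus formalized $\Sigma_1$-completeness pushes the opposite disjunct of the comparison into $\neg\rho$ (or $\neg\Con_T$) inside $T$, giving $T + \rho \vdash \sigma$. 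This is exactly the device the paper deploys for the sentence $\chi_{\Sigma_1}$ in the proof of Theorem~\ref{RosThm4}, and it is what your sketch gestures at but does not supply; without it the argument does not close.
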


Since $T + \Rfn(T) \vdash T + \Rfn_{\Pi_1}(\PRR_T) \vdash \rho$, it follows from \v{S}vejdar's theorem that the $\Sigma_1$-soundness of $T$ is equivalent to the $\Sigma_1$-conservativity of $T + \Rfn_{\Pi_1}(\PRR_T)$ over $T$. 
In \cite{Kur16}, it is shown that this equivalence also holds in the case that we replace $T + \Rfn_{\Pi_1}(\PRR_T)$ by $T + \Rfn_{\Sigma_1}(\PRR_T)$. 

\begin{thm}[Kurahashi {\cite[Theorem 6.2]{Kur16}}]
For any Rosser provability predicate $\PRR_T(x)$ of $T$, $\Rfn_{\Sigma_1}(\PRR_T)$ is $\Sigma_1$-conservative over $T$ if and only if $T$ is $\Sigma_1$-sound. 
\end{thm}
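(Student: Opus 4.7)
The plan is to prove the two implications separately.

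For the $(\Leftarrow)$ direction, I would give a direct semantic argument using the standard model $\mathbb{N}$. Assuming $T$ is $\Sigma_1$-sound, certainly $\mathbb{N} \models T$. For any $\Sigma_1$ sentence $\sigma$, if $\mathbb{N} \models \PRR_T(\gn{\sigma})$, then $\mathbb{N} \models \PR_T(\gn{\sigma})$, which (since $\PR_T$ is a provability predicate of $T$) gives $T \vdash \sigma$, and by $\Sigma_1$-soundness $\mathbb{N} \models \sigma$. Hence every instance of $\Rfn_{\Sigma_1}(\PRR_T)$ is true in $\mathbb{N}$, so $\mathbb{N} \models T + \Rfn_{\Sigma_1}(\PRR_T)$. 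Any $\Sigma_1$ sentence provable from the latter is true in $\mathbb{N}$, and therefore $T$-provable by formalized $\Sigma_1$-completeness.

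For the $(\Rightarrow)$ direction, I would argue by contraposition. Suppose $T$ is not $\Sigma_1$-sound, and fix a false $\Sigma_1$ sentence $\sigma_0$ with $T \vdash \sigma_0$. The goal is to exhibit a $\Sigma_1$ sentence $\theta$ with $T + \Rfn_{\Sigma_1}(\PRR_T) \vdash \theta$ but $T \nvdash \theta$. The strategy is to mimic \v{S}vejdar's argument for $\Rfn_{\Pi_1}(\PRR_T)$. Let $\rho$ be a $\Pi_1$ Rosser sentence, i.e., $T \vdash \rho \leftrightarrow \neg \PRR_T(\gn{\rho})$; then $\neg \rho$ is $\Sigma_1$ and $T$-provably equivalent to $\PRR_T(\gn{\rho})$, so the instance $\PRR_T(\gn{\neg \rho}) \to \neg \rho$ of $\Rfn_{\Sigma_1}(\PRR_T)$ combined with the antisymmetry $\neg(\PRR_T(\gn{\rho}) \land \PRR_T(\gn{\neg \rho}))$ from Proposition \ref{prop:wc}(2) gives
\[
T + \Rfn_{\Sigma_1}(\PRR_T) \vdash \neg \PRR_T(\gn{\neg \rho}).
\]
This is a $\Pi_1$ consequence, not yet a $\Sigma_1$ one. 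The next move is to use the fixed-point lemma to manufacture a $\Sigma_1$ sentence $\theta$ that encodes a witness-comparison diagonalization against $\PRR_T$ and incorporates $\sigma_0$, so that the false but $T$-provable $\sigma_0$ combined with $\Rfn_{\Sigma_1}(\PRR_T)$ forces $\theta$, while a Rosser-style argument using only the consistency of $T$ shows $T \nvdash \theta$.

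The main obstacle is exactly this last step. The witness-comparison properties of $\PRR_T$ make it straightforward to extract $\Pi_1$ sentences (of the form $\neg \PRR_T(\gn{\varphi})$) from $\Rfn_{\Sigma_1}(\PRR_T)$, but upgrading the argument to produce a genuinely $\Sigma_1$ sentence requires a delicately engineered fixed point. The fixed point $\theta$ must be chosen so that the presence of the false $\Sigma_1$ sentence $\sigma_0$ (via formalized $\Sigma_1$-completeness forcing $\PR_T(\gn{\sigma_0})$) interacts with the Rosser structure to yield a $\Sigma_1$ statement in $T + \Rfn_{\Sigma_1}(\PRR_T)$. The key insight is that without $\Sigma_1$-soundness, provable $\Sigma_1$ sentences become available as "free" premises that can be combined with instances of $\PRR_T(\gn{\sigma}) \to \sigma$ to shift the derivable content from $\Pi_1$ into $\Sigma_1$; carrying out this shift explicitly is the technical heart of the proof.
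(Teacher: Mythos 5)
Both directions of your proposal have genuine gaps. For the $(\Leftarrow)$ direction, the opening step ``$T$ is $\Sigma_1$-sound, certainly $\N \models T$'' is false: $\Sigma_1$-soundness only forbids $T$ from proving false $\Sigma_1$ sentences, and a consistent r.e.\ extension of $\PA$ can be $\Sigma_1$-sound while proving false sentences of higher complexity, so it need not have $\N$ as a model. Hence you cannot conclude $\N \models T + \Rfn_{\Sigma_1}(\PRR_T)$ and read off the truth of its $\Sigma_1$ consequences. The damage is repairable without leaving your framework: from $T \vdash \bigwedge_{i<k}(\PRR_T(\gn{\sigma_i}) \to \sigma_i) \to \sigma$ one discharges the conjuncts with $\sigma_i$ true (those $\sigma_i$ are $T$-provable by $\Sigma_1$-completeness) and obtains $T \vdash \bigvee_{i \in F}\PRR_T(\gn{\sigma_i}) \lor \sigma$ where $F$ indexes the false $\sigma_i$; each $\PRR_T(\gn{\sigma_i})$ with $i \in F$ is a \emph{false} $\Sigma_1$ sentence (since $T \nvdash \sigma_i$ by $\Sigma_1$-soundness, $\N \models \neg\PR_T(\gn{\sigma_i})$), so if $\sigma$ were false this whole disjunction would be a false $\Sigma_1$ sentence provable in $T$, a contradiction. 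Alternatively one can route the argument through the Kreisel--L\'evy lemma that $\Sigma_1$-soundness of $T$ implies that of $T + \Rfn(T) \supseteq T + \Rfn_{\Sigma_1}(\PRR_T)$, which is how Section \ref{sec:Sigma1} frames the easy directions. Either way, the inference actually written down does not work.

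For the $(\Rightarrow)$ direction there are two problems. First, the ``antisymmetry $\neg(\PRR_T(\gn{\rho}) \land \PRR_T(\gn{\neg \rho}))$'' is \emph{not} an instance of Proposition \ref{prop:wc}(2): $\PRR_T(\gn{\neg\rho})$ compares $\PR_T(\gn{\neg\rho})$ with $\PR_T(\gn{\neg\neg\rho})$, not with $\PR_T(\gn{\rho})$, so witness comparison alone does not refute the conjunction. This is precisely the condition \textsf{C2} of Section 3.2, which the paper points out fails for some Rosser provability predicates (e.g.\ $\mathrm{Pr}_3^{\mathrm{R}}$); since the theorem quantifies over \emph{all} Rosser predicates, you may not assume it. Second, and decisively, the construction of the $\Sigma_1$ witness $\theta$ is the entire content of this direction, and you explicitly leave it unexecuted (``carrying out this shift explicitly is the technical heart of the proof''). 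The missing fixed point has the shape $\PA \vdash \chi \leftrightarrow \bigl(\PR_T(\gn{\neg\chi}) \lor \sigma_0 \prec \PR_T(\gn{\chi})\bigr)$, with $\sigma_0$ your false $T$-provable $\Sigma_1$ sentence folded into the witness comparison; compare the $\Gamma = \Sigma_1$ case in the proof of Theorem \ref{RosThm4}, where $\chi_{\Sigma_1}$ plays exactly this role. One then shows $T + \Rfn_{\Sigma_1}(\PRR_T) \vdash \chi$ using $T \vdash \sigma_0$ together with the reflection instance for the $\Sigma_1$ sentence whose Rosser provability $\chi$ speaks about, and $T \nvdash \chi$ by a Rosser-style case analysis using $\N \not\models \sigma_0$ and the consistency of $T$. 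As it stands, the proposal correctly identifies the ingredients but establishes neither implication.
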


As shown in the last section, Beklemishev's theorem does not hold in general for Rosser provability predicates and $\Gamma \supseteq \Pi_1$.
In this section, in the case of $\Gamma = \Sigma_1$, we show that the $\Sigma_1$-conservation property holds for Rosser provability predicates if and only if $T$ is $\Sigma_1$-sound.

\setcounter{cl}{0}

\begin{thm}\label{RosThm4}
The following are equivalent:
\begin{enumerate}
\item $T$ is $\Sigma_1$-sound.
\item For any Rosser provability predicate $\PRR_{T}(x)$ of $T$, any $\Gamma \in \{\Sigma_n, \Pi_n \mid n \geq 1\}$, and any theory $S$ with $T + \Rfn (T) \vdash S \vdash T + \Rfn_{\Gamma}(\PRR_T)$, $S$ is $\Sigma_1$-conservative over $T + \Rfn_{\Gamma}(\PRR_{T})$. 
\item For any Rosser provability predicate $\PRR_{T}(x)$ of $T$, there exists $\Gamma \in \{\Sigma_n, \Pi_n \mid n \geq 1\}$ such that $T + \Rfn_{\Gamma^d}(\PRR_{T})$ is $\Sigma_1$-conservative over $T + \Rfn_{\Gamma}(\PRR_{T})$. 
\end{enumerate}
\end{thm}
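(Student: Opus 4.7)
The plan is to prove the cycle $(1) \Rightarrow (2) \Rightarrow (3) \Rightarrow (1)$, with the last implication being the substantive one.

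For $(1) \Rightarrow (2)$, assume $T$ is $\Sigma_1$-sound. The Kreisel--L\'evy theorem recalled at the start of this section gives that $T + \Rfn(T)$ is $\Sigma_1$-sound, and hence so is every subtheory $S$ of $T + \Rfn(T)$. Any $\Sigma_1$-theorem of such an $S$ is therefore true in $\N$, and by $\Sigma_1$-completeness of $\PA$ is already provable in $T$, hence in $T + \Rfn_\Gamma(\PRR_T)$, yielding the required conservativity.

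For $(2) \Rightarrow (3)$, given a Rosser provability predicate $\PRR_T(x)$, take $\Gamma = \Pi_1$ so that $\Gamma^d = \Sigma_1$, and consider the intermediate theory $S = T + \Rfn_{\Sigma_1}(\PRR_T)$. Once the sandwich $T + \Rfn(T) \vdash S \vdash T + \Rfn_{\Pi_1}(\PRR_T)$ is verified (by combining full reflection for $T$ with the basic observation that $\PA \vdash \PRR_T(x) \to \PR_T(x)$), (2) instantly yields $\Sigma_1$-conservativity of $T + \Rfn_{\Sigma_1}(\PRR_T)$ over $T + \Rfn_{\Pi_1}(\PRR_T)$, which is exactly (3) for $\Gamma = \Pi_1$.

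For $(3) \Rightarrow (1)$, we argue by contrapositive. Assume $T$ is not $\Sigma_1$-sound, and fix a false $\Sigma_1$ sentence $\tau$ with $T \vdash \tau$. We construct a Rosser provability predicate $\PRR_T(x)$ for which (3) fails. The construction is modelled on Theorem~\ref{RosThm2}: via the formalized recursion theorem we define a $\Delta_1(\PA)$-definable function $h$ (with its associated $\PR_h$ and $\PRR_h$) together with, for each $\Gamma \in \{\Sigma_n, \Pi_n \mid n \geq 1\}$, a $\Sigma_1$ witness $\sigma_\Gamma$ of non-conservativity obtained from a witness-comparison fixed point analogous to $\beta$ in the proof of Theorem~\ref{RosThm2}. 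The bell-ringing rules of $h$ are designed so that $T + \Rfn_{\Gamma^d}(\PRR_h) \vdash \sigma_\Gamma$ while $T + \Rfn_\Gamma(\PRR_h) \nvdash \sigma_\Gamma$ for every such $\Gamma$.

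The main obstacle lies in this last construction: the bell rules must simultaneously produce the Rosser structure of $\PRR_h$, accommodate both of the cases $\Gamma = \Sigma_1$ and $\Gamma = \Pi_1$ (whose duals $\Gamma^d$ have different syntactic relationships to the $\Sigma_1$ witnesses), and ensure non-conservativity at every level of $\Gamma$. The $\Sigma_1$-unsoundness of $T$ enters essentially --- without a false $\Sigma_1$ theorem $\tau$ of $T$ available to drive the separation, the Kreisel--L\'evy argument underlying $(1) \Rightarrow (3)$ would propagate $\Sigma_1$-soundness down to $T + \Rfn_{\Gamma^d}(\PRR_h)$ and thereby force the conservativity we wish to defeat.
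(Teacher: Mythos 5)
Your outline of the easy directions and your identification of the contrapositive strategy for $(3)\Rightarrow(1)$ match the paper's intent, but there are two genuine problems. First, your $(2)\Rightarrow(3)$ argument rests on a false sandwich: you take $S = T + \Rfn_{\Sigma_1}(\PRR_T)$ with $\Gamma = \Pi_1$ and claim $S \vdash T + \Rfn_{\Pi_1}(\PRR_T)$, but $\Rfn_{\Sigma_1}(\PRR_T)$ does not imply $\Rfn_{\Pi_1}(\PRR_T)$ in general (Theorem \ref{RosThm3} with $\Gamma = \Pi_1$ gives a Rosser predicate where $T+\Rfn_{\Pi_1}(\PRR_T)\vdash\Con_T$ but $T+\Rfn_{\Sigma_1}(\PRR_T)\nvdash\Con_T$); the observation $\PA \vdash \PRR_T(x)\to\PR_T(x)$ only helps with the upper bound. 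The fix is to apply $(2)$ with $S = T+\Rfn(T)$ itself and then note that $\Sigma_1$-conservativity over $T+\Rfn_{\Gamma}(\PRR_T)$ is inherited by the subtheory $T+\Rfn_{\Gamma^d}(\PRR_T)$.

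Second, and more seriously, your $(3)\Rightarrow(1)$ is a statement of intent rather than a proof: you explicitly name ``the main obstacle'' --- designing bell rules that defeat $\Sigma_1$-conservativity at \emph{every} level $\Gamma$ simultaneously --- and then do not overcome it. The paper's construction is not obtained by simply reusing the fixed points of Theorem \ref{RosThm2}: there the witnesses $\psi_\Gamma$ are $\Pi_1$ and only $\Pi_1$-conservativity fails, whereas here the witnesses must be $\Sigma_1$. The missing idea is to thread the false $\Sigma_1$ theorem $\theta \equiv \exists x\,\delta(x)$ into the witness comparisons themselves, e.g.
\[
\PA \vdash \chi_{\Sigma_1} \leftrightarrow \bigl( \PR_{h}(\gn{\neg(\chi_{\Sigma_1} \wedge \alpha_{\Sigma_1})}) \vee \theta \prec \PR_{h}(\gn{\chi_{\Sigma_1} \wedge \alpha_{\Sigma_1}}) \bigr),
\]
and to guard the bell-ringing clause by the condition $\forall x \leq k_m\, \neg\delta(x)$. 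Then $T \vdash \theta$ forces $T + \Rfn_{\Gamma^d}(\PRR_h)$ to prove the relevant $\Sigma_1$ witness, while $\N \not\models \theta$ keeps the bell silent in the standard model, which is what blocks $T + \Rfn_{\Gamma}(\PRR_h)$ from proving it. Your closing paragraph explains why $\Sigma_1$-soundness would obstruct such a construction, but that is the converse direction; it does not supply the construction, nor the verification that the bell rings exactly when $T$ is inconsistent, which is the technical heart of the paper's proof.
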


\begin{proof}
The implications $(1 \Rightarrow 2)$ and $(2 \Rightarrow 3)$ are easy. 
We prove the contrapositive of $(3 \Rightarrow 1)$.
Suppose that $T$ is not $\Sigma_1$-sound. 
Then, there exists a $\Sigma_1$ sentence $\theta$ such that $T \vdash \theta$ and $\N \not \models \theta$. 
We may assume that $\theta$ is of the form $\exists x \delta(x)$ for some $\Delta_0$ formula $\delta (x)$. 
We define a $\Delta_1(\PA)$-definable function $h$. 
Formulas $\PR_{h}(x)$ and $\PRR_{h}(x)$ based on $h$ are defined as in the previous sections.
We can effectively find an effective sequence $\{\chi_{\Gamma}\}_{\Gamma \in \{ \Sigma_{n}, \Pi_{n} \mid n \geq 1\}}$ of sentences such that: 

\begin{itemize}
\item For $\Gamma = \Sigma_1$: $\chi_{\Sigma_1}$ is a $\Sigma_1$ sentence satisfying
\[
    \PA \vdash \chi_{\Sigma_1} \leftrightarrow \bigl( \PR_{h}(\gn{\neg(\chi_{\Sigma_1} \wedge \alpha_{\Sigma_1})}) \vee \theta \prec \PR_{h}(\gn{\chi_{\Sigma_1} \wedge \alpha_{\Sigma_1}}) \bigr).
\]

\item For $\Gamma \neq \Sigma_1$: $\chi_{\Gamma}$ is a $\Pi_1$ sentence satisfying
\[
    \PA \vdash \chi_{\Gamma} \leftrightarrow \neg \bigl( \PR_{h}(\gn{\chi_{\Gamma} \wedge \alpha_{\Gamma}}) \preccurlyeq \PR_{h}(\gn{\neg (\chi_{\Gamma} \wedge \alpha_{\Gamma})}) \vee \theta \bigr).
\]
\end{itemize}

We define the function $h$ as follows: Let $k_{0} : = 0$. 

\vspace{0.1in}
\textsc{Procedure 1}: The bell has not yet rung. 

Stage $m$: If $P_{T, m} = P_{T, m-1}$, then let $k_{m+1} : = k_m$ and go to Stage $m+1$.

If $\varphi \in P_{T, m} \setminus P_{T, m-1}$, then we distinguish the following three cases.

\begin{enumerate}
\item[(i):] If $\forall x \leq k_{m} \neg \delta(x)$ holds and $\varphi$ is $\bigwedge_{i<j} \bigl( \PRR_{h}(\gn{\gamma_{i}}) \to \gamma_{i}\bigr) \to \chi_{\Gamma} \wedge \alpha_{\Gamma}$ for some $\Gamma$, $j$, and some distinct $\Gamma^d$ formulas $\gamma_{0}, \ldots, \gamma_{j-1}$ such that $h$ does not output them before stage $m$, then define $h(k_m) : = \chi_{\Gamma} \wedge \alpha_{\Gamma}$ and $h(k_{m}+1+i) : = \neg \gamma_{i}'$ for every $i<j$.
Here, $\gamma_{0}', \ldots, \gamma_{j-1}'$ is the rearrangement of $\gamma_{0}, \ldots, \gamma_{j-1}$ in the descending order of length. Ring the bell and go to Procedure 2.
When $j = 0$, the above description intends that $\varphi$ is of the form $\chi_{\Gamma} \wedge \alpha_{\Gamma}$. 

\item[(ii):] If $\varphi$ is $\neg (\chi_{\Gamma} \wedge \alpha_{\Gamma})$ for some $\Gamma$, then define $h(k_m) : = \neg (\chi_{\Gamma} \wedge \alpha_{\Gamma})$. 
Ring the bell and go to Procedure 2.

\item[(iii):] Otherwise, define $h(k_{m}) : = \varphi$ and $k_{m+1} : =k_{m} +1$. 
Go to Stage $m+1$. 
\end{enumerate}

\vspace{0.1in}
\textsc{Procedure 2}: The function $h$ outputs all formulas.

We finished the definition of $h$.

Let $\Bell_h(x)$ be an $\LA$-formula saying ``the bell of $h$ rings at Stage $x$''. 

\begin{cl}\label{cl:bell4}
$\PA \vdash \exists x\, \Bell_h (x)$ $\leftrightarrow \neg \Con_T$.
\end{cl}

\begin{proof}
Argue in $\PA$. 
We only give a proof of the implication $(\rightarrow)$. 
Suppose that the bell rings at Stage $m$. 
We distinguish the following two cases.

\paragraph{Case 1:} $\forall x \leq k_{m} \neg \delta(x)$ holds and $m$ is a $T$-proof of $\bigwedge_{i<j} \bigl( \PRR_{h}(\gn{\gamma_{i}}) \to \gamma_{i}\bigr) \to \chi_{\Gamma} \wedge \alpha_{\Gamma}$ for some $\Gamma$, $j$, and distinct $\Gamma^d$ formulas $\gamma_{0}, \ldots, \gamma_{j-1}$: \\
Since $\chi_{\Gamma} \wedge \alpha_{\Gamma}$ is not $\Gamma^d$ but every $\gamma_i$ for $i < j$ is $\Gamma^d$, we have that each of $\gamma_{0}, \ldots, \gamma_{j-1}$ is different from $\chi_{\Gamma} \wedge \alpha_{\Gamma}$. 
In the same argument as in the proof of Claim \ref{cl:bell3} of Theorem \ref{RosThm2}, we obtain $\bigwedge_{i<j} \bigl( \PR_{h}(\gn{\neg \gamma_{i}}) \preccurlyeq \PR_{h}(\gn{\gamma_{i}}) \bigr)$ holds and it is provable in $T$ by formalized $\Sigma_1$-completeness. 
Thus, $T$ proves $\bigwedge_{i<j} \neg \PRR_{h}(\gn{\gamma_{i}})$, and also proves $\bigwedge_{i<j} \bigl( \PRR_{h}(\gn{\gamma_{i}}) \to \gamma_{i}\bigr)$. 
Hence, we get that $\chi_{\Gamma}$ is $T$-provable.
Then, regardless of whether $\Gamma = \Sigma_1$ or not, we have that
\begin{equation}\label{fml3}
    \neg \bigl( \PR_{h}(\gn{\chi_{\Gamma} \wedge \alpha_{\Gamma}}) \preccurlyeq \PR_{h}(\gn{\neg(\chi_{\Gamma} \wedge \alpha_{\Gamma})}) \vee \theta \bigr)
\end{equation}
is $T$-provable. 

Note that $h(k_m) = \chi_{\Gamma} \wedge \alpha_{\Gamma}$, $h$ does not output $\neg(\chi_{\Gamma} \wedge \alpha_{\Gamma})$ before Stage $m$, and $\forall x \leq k_{m} \neg \delta(x)$ holds. 
So, we have that 
\[
    \PR_{h}(\gn{\chi_{\Gamma} \wedge \alpha_{\Gamma}}) \preccurlyeq \PR_{h}(\gn{\neg(\chi_{\Gamma} \wedge \alpha_{\Gamma})}) \vee \theta
\]
holds and this $\Sigma_1$ sentence is provable in $T$. 
By combining this with (\ref{fml3}), we obtain that $T$ is inconsistent.

\paragraph{Case 2:} $m$ is a $T$-proof of $\neg (\chi_{\Gamma} \wedge \alpha_{\Gamma})$. \\
Since $\alpha_{\Gamma}$ is $T$-provable, we have that $\neg \chi_{\Gamma}$ is $T$-provable. 
Since $T$ proves $\theta$, we have that $\neg \chi_{\Sigma_1}$ implies 
\[
    \PR_{h}(\gn{\chi_{\Sigma_1} \wedge \alpha_{\Sigma_1}}) \preccurlyeq \PR_{h}(\gn{\neg(\chi_{\Sigma_1} \wedge \alpha_{\Sigma_1})}) \vee \theta.
\]
Therefore, regardless of whether $\Gamma = \Sigma_1$ or not, we obtain that $T$ proves
\[
    \PR_{h}(\gn{\chi_{\Gamma} \wedge \alpha_{\Gamma}}) \preccurlyeq \PR_{h}(\gn{\neg(\chi_{\Gamma} \wedge \alpha_{\Gamma})}) \vee \theta,
\]
and hence $T$ also proves
\begin{equation}\label{fml4}
    \PR_{h}(\gn{\chi_{\Gamma} \wedge \alpha_{\Gamma}}) \preccurlyeq \PR_{h}(\gn{\neg(\chi_{\Gamma} \wedge \alpha_{\Gamma})}).
\end{equation}

Since $h(k_m) = \neg (\chi_{\Gamma} \wedge \alpha_{\Gamma})$ and $h$ does not output $\chi_{\Gamma} \wedge \alpha_{\Gamma}$ before Stage $m$, we obtain that
\[
    \PR_{h}(\gn{\neg(\chi_{\Gamma} \wedge \alpha_{\Gamma})}) \prec \PR_{h}(\gn{\chi_{\Gamma} \wedge \alpha_{\Gamma}})
\]
holds and this is provable in $T$. 
By combining this with (\ref{fml4}), the inconsistency of $T$ follows.
\end{proof}

As in the previous proofs, the following claim holds, and hence $\PRR_{h}(x)$ is a Rosser provability predicate of $T$. 

\begin{cl}\label{cl:equiv4}
$\PA \vdash \forall x \bigl( \Prov_{T}(x) \leftrightarrow \PR_{h}(x) \bigr)$.
\end{cl}

We finally prove that $\PRR_{h}(x)$ uniformly lacks $\Sigma_1$-conservation property.

\begin{cl}
For any $\Gamma \in \{ \Sigma_{n}, \Pi_{n} \mid n \geq 1\}$, there exists a $\Sigma_1$ sentence $\sigma$ such that $T + \Rfn_{\Gamma^d}(\PRR_{h}) \vdash \sigma$ and $T + \Rfn_{\Gamma}(\PRR_{h}) \nvdash \sigma$.
\end{cl}
\begin{proof}
We distinguish the following two cases. 

\paragraph{Case 1:} $\Gamma = \Pi_1$. \\
Let $\sigma$ be the $\Sigma_1$ sentence $\chi_{\Sigma_1}$. 
Since $\Gamma^{d} = \Sigma_1$ and $\chi_{\Sigma_1} \wedge \alpha_{\Sigma_1}$ is a $\Sigma_1$ sentence, we obtain $T + \Rfn_{\Sigma_1}(\PRR_{h}) \vdash \PRR_{h}(\gn{\chi_{\Sigma_1} \wedge \alpha_{\Sigma_{1}}}) \to \chi_{\Sigma_1}$.
Since $T \vdash \theta$, it is easily shown that $T \vdash \neg \chi_{\Sigma_1} \to \PRR_{h}(\gn{\chi_{\Sigma_1} \wedge \alpha_{\Sigma_1}})$. 
Hence, we obtain $T + \Rfn_{\Sigma_1}(\PRR_{h}) \vdash \chi_{\Sigma_1}$. 

Suppose, towards a contradiction, that $T + \Rfn_{\Pi_1}(\PRR_{h}) \vdash \chi_{\Sigma_1}$. 
Then, $T + \Rfn_{\Pi_1}(\PRR_{h}) \vdash \chi_{\Sigma_1} \wedge \alpha_{\Sigma_1}$, and hence for some $j$ and distinct $\Pi_1$ sentences $\gamma_{0}, \ldots, \gamma_{j-1}$, we have $T \vdash \bigwedge_{i<j} \bigl( \PRR_{h}(\gn{\gamma_{i}}) \to \gamma_{i} \bigr) \to (\chi_{\Sigma_1} \wedge \alpha_{\Sigma_1})$. 
We may assume that for every $i<j$, $\gamma_i$ is not provable in $T$. 
Since, $\N \not \models \theta$, we have that the bell must ring at some stage in $\N$. 
By Claim \ref{cl:bell4}, $T$ is inconsistent. 
A contradiction. 

\paragraph{Case 2:} $\Gamma \neq \Pi_1$. \\
Let $\sigma$ be the $\Sigma_1$ sentence
\[
    \PR_{h}(\gn{\neg (\chi_{\Gamma^d} \wedge \alpha_{\Gamma^d})}) \vee \theta \prec \PR_{h}(\gn{\chi_{\Gamma^d} \wedge \alpha_{\Gamma^d}}). 
\]
Since $T \vdash \theta$, we obtain $T \vdash \sigma \leftrightarrow \chi_{\Gamma^d}$. 

By the definition of $\chi_{\Gamma^d}$, we obtain
\begin{equation*}
\PA \vdash \neg \PRR_{h}(\gn{\chi_{\Gamma^d} \wedge \alpha_{\Gamma^d}}) \to \chi_{\Gamma^d}.
\end{equation*}
Since $\Gamma^{d} \supseteq \Pi_1$, we have that $\chi_{\Gamma^d} \wedge \alpha_{\Gamma^d}$ is a $\Gamma^d$ sentence. 
We then obtain
\begin{equation*}
T + \Rfn_{\Gamma^d}(\PRR_{h}) \vdash \PRR_{h}(\gn{\chi_{\Gamma^d} \wedge \alpha_{\Gamma^d}}) \to \chi_{\Gamma^d}
\end{equation*} 
Thus, we get $T + \Rfn_{\Gamma^d}(\PRR_{h}) \vdash \chi_{\Gamma^d}$, and hence $T + \Rfn_{\Gamma^d}(\PRR_{h}) \vdash \sigma$. 

We prove $T + \Rfn_{\Gamma}(\PRR_{h}) \nvdash \sigma$. 
Suppose, towards a contradiction, that $T + \Rfn_{\Gamma}(\PRR_{h}) \vdash \sigma$. 
Then, $T + \Rfn_{\Gamma}(\PRR_{h}) \vdash \chi_{\Gamma^d} \wedge \alpha_{\Gamma^d}$. 
As in Case 1, this implies the inconsistency of $T$, a contradiction. 
\end{proof}

We have finished the proof of Theorem \ref{RosThm4}.
\end{proof}

\section*{Funding}

The second author was supported by JSPS KAKENHI (grant numbers JP19K14586 and JP23K03200).

\section*{Acknowledgements}

The authors would like to thank two anonymous referees for their valuable comments and suggestions.

\bibliographystyle{plain}
\bibliography{ref}

\end{document}